\newcommand*{\mailto}[1]{\href{mailto:#1}{\nolinkurl{#1}}}
\newcommand{\arxiv}[1]{\href{http://arxiv.org/abs/#1}{arXiv:#1}}
\newcommand{\N}{{\mathbb N}}
\newcommand{\bbC}{{\mathbb{C}}}
\newcommand{\bbN}{{\mathbb{N}}}
\newcommand{\bbR}{{\mathbb{R}}}
\newcommand{\bbT}{{\mathbb{T}}}
\newcommand{\cA}{{\mathcal A}}
\newcommand{\cB}{{\mathcal B}}
\newcommand{\cH}{{\mathcal H}}
   \def\sH{{\mathfrak H}}
\def\st{{\mathfrak t}}
\def\sss{{\mathfrak s}}
\DeclareMathOperator{\rank}{rank}
\DeclareMathOperator{\ran}{ran}
\DeclareMathOperator{\dom}{dom}
\DeclareMathOperator{\mul}{mul}
\DeclareMathOperator{\ls}{lin.span}
\newcommand{\no}{\notag}
\newcommand{\lb}{\label}
\newcommand{\f}{\frac}
\newcommand{\ol}{\overline}
\newcommand{\bs}{\backslash}
\newcommand{\wti}{\widetilde}
\newcommand{\oh}{o}
\newcommand{\hatt}{\widehat}
\newcommand{\dott}{\,\cdot\,}
\renewcommand{\dot}{\overset{\textbf{\Large.}}}
\newcommand{\bi}{\bibitem}
\let\geq\geqslant
\let\leq\leqslant
\newcommand{\la}{\lambda}
\newcommand{\lam}{\lambda}
\newcommand{\al}{\alpha}
\newcommand{\be}{\beta}
\newcommand{\Lr}{{L^2((a,b);r\,dx)}}
\def\theequation{\@arabic\c@equation}
\numberwithin{equation}{section}
\newtheorem{theorem}{Theorem}[section]
\newtheorem{proposition}[theorem]{Proposition}
\newtheorem{lemma}[theorem]{Lemma}
\newtheorem{definition}[theorem]{Definition}
\newtheorem{hypothesis}[theorem]{Hypothesis}
\theoremstyle{remark}
\newtheorem{remark}[theorem]{Remark}
\begin{document}

\title[Sesquilinear Forms for (Singular) Sturm--Liouville Operators]{On Sesquilinear Forms for Lower Semibounded
(Singular) Sturm--Liouville Operators}

\author[J. Behrndt]{Jussi Behrndt}
\address{Technische Universit\"{a}t Graz\\
Institut f\"ur Angewandte Mathematik\\
Steyrergasse 30\\
8010 Graz, Austria}
\email{\mailto{behrndt@tugraz.at}}
\urladdr{\url{https://www.math.tugraz.at/~behrndt/}}

\author[F.\ Gesztesy]{Fritz Gesztesy}
\address{Department of Mathematics,
Baylor University, Sid Richardson Bldg., 1410 S.\,4th Street, Waco, TX 76706, USA}
\email{\mailto{Fritz\_Gesztesy@baylor.edu}}
\urladdr{\url{http://www.baylor.edu/math/index.php?id=935340}}

\author[S. Hassi]{Seppo Hassi}
\address{Department of Mathematics and Statistics, University of Vaasa, P.O. Box 700, 65101 Vaasa, Finland}
\email{\mailto{sha@uwasa.fi}}
\urladdr{\url{https://lipas.uwasa.fi/~sha/}}

\author[R.\ Nichols]{Roger Nichols}
\address{Department of Mathematics (Dept.~6956), The University of Tennessee at Chattanooga,
615 McCallie Avenue, Chattanooga, TN 37403, USA}
\email{\mailto{Roger-Nichols@utc.edu}}
\urladdr{\url{https://sites.google.com/mocs.utc.edu/rogernicholshomepage/home}}

\author[H.S.V. de Snoo]{Henk de Snoo}
\address{Bernoulli Institute for Mathematics, Computer Science and Artificial Intelligence,
University of Groningen, P.O. Box 407, 9700 AK Groningen, Netherlands}
\email{\mailto{h.s.v.de.snoo@rug.nl}}

\date{\today}
\@namedef{subjclassname@2020}{\textup{2020} Mathematics Subject Classification}
\subjclass[2020]{Primary: 34B24, 34L40, 47A07; Secondary: 47E05.}
\keywords{Sturm--Liouville operator, sesquilinear form.}


\begin{abstract}
Any self-adjoint extension of a (singular) Sturm--Liouville operator bounded from below
uniquely leads to an associated sesquilinear form.
This form is characterized in terms of principal and nonprincipal solutions
of the Sturm--Liouville operator by using generalized boundary values. We provide these forms in detail
in all possible cases (explicitly, when both endpoints are limit circle, when one endpoint is limit circle,
and when both endpoints are limit point).
\end{abstract}

\maketitle

{\scriptsize{\tableofcontents}}

\maketitle



\section{Introduction} \lb{s1}

The traditional three-coefficient Sturm--Liouville (generalized eigenvalue) problem on an arbitrary open interval
$(a,b) \subseteq \bbR$ is of the form
\begin{equation} \lb{1.1}
 -(p(x)f'(x))'+q(x) f(x) = z r(x) f(x) \, \text{ for a.e.~$x \in (a,b)$}, \; z \in \bbC,
\end{equation}
where the coefficients $p, q, r$ are real-valued (Lebesgue) a.e.~on $(a,b)$, $p, r > 0$ a.e.~on $(a,b)$, and $1/p, q, r \in L^1_{loc}((a,b); dx)$. In addition, $z \in \bbC$ represents a (generally, complex-valued) spectral parameter, and $f$ and $pf'$ are assumed to be locally absolutely continuous on $(a,b)$; see Section \ref{s2} for details. More precisely, the differential expression $\tau$ underlying \eqref{1.1},
\begin{equation}
\tau = \f{1}{r(x)} \bigg[ - \f{d}{dx} p(x) \f{d}{dx} + q(x)\bigg] \, \text{  for a.e.~$x \in (a,b)$},    \lb{1.2}
\end{equation}
naturally leads to a minimal closed symmetric operator
$T_{min}$ in the Hilbert space $L^2((a,b); r\,dx)$ (cf.\ \eqref{2.10} and \eqref{2.11}) and its
deficiency indices are then given by $(0,0)$, $(1,1)$, or $(2,2)$.
From the outset, the operator $T_{min}$ is in general not lower semibounded.
However, in this paper it will be assumed that equation \eqref{1.1}
has solutions which are nonoscillatory at the endpoints $a$ and $b$ for some
$z \in \bbR$ and in this case $T_{min}$ turns out to be lower semibounded.
As a consequence, all self-adjoint extensions of $T_{min}$ are then
lower semibounded, see Proposition \ref{23.t2.5}. For example, in the special case of  a one-dimensional Schr\"odinger operator where $\tau$ simplifies to
$\tau= -(d^2/dx^2)+q(x)$ for a.e.~$x \in (a,b)$,
quantum mechanical considerations typically lead to the requirement of lower semibounded self-adjoint extensions of $T_{min}$
and the characterization of the underlying quadratic forms (representing the sum of kinetic and potential energy)
corresponding to them.

In this paper we consider the natural and nontrivial question of determining the form domains associated with general, that is, lower semibounded, self-adjoint, singular, three-coefficient
Sturm--Liouville operators associated with $L^2((a,b); r\,dx)$-realizations of the differential expression $\tau$ in \eqref{1.2}.
The corresponding sesquilinear forms are then connected to integrals of the form
\begin{equation}\lb{1.3}
 \int_a^b\, dx\, \big[p(x) \ol{f'(x)} g'(x)  + q(x)\ol{f(x)}g(x)\big]
\end{equation}
for ``appropriate'' elements $f, g \in L^2((a,b); r\,dx)$.
However, if  one of the functions $f$ or $g$ is not compactly supported in $(a,b)$,
there might well be a problem with the convergence of the integral in \eqref{1.3}.
This problem will be avoided when rewriting the integral
by means of the nonoscillatory solutions of \eqref{1.1} mentioned above.
These solutions will also be used to introduce generalized
boundary values (see Proposition \ref{23.t7.3.11}) that are associated to the particular self-adjoint extension of $T_{min}$ under consideration. The main results in this paper are formulated  in terms of
proper interpretations of the integral \eqref{1.3} and, in particular,
in terms of generalized boundary values, see Proposition \ref{23.t7.3.11}.

The history of Sturm--Liouville problems, and, especially, the naturally associated spectral theory, is incredibly rich. Hence, we can only point to some of the classical contributions by
Weyl \cite{We09}--\cite{We50}, Titchmarsh \cite{Ti41}--\cite{Ti45}, \cite[Chs.~I--VI]{Ti62}, and Kodaira \cite{Ko49}, \cite{Ko50}, and, for more recent accounts, refer to the monographs
\cite[Sects.~127, 132]{AG81}, 
\cite[Ch.~6]{BHS20},
\cite[Chs.~4, 6--8]{BBW20},
\cite[Ch.~9]{CL85},
\cite[Sect.~13.6, 13.9, 13.10]{DS88},
\cite[Ch.~2]{EK82},
\cite[Sect.~3.10]{EE18},
\cite[Chs.~4--10, 13]{GNZ24}, 
\cite{He67}, 
\cite[Parts~II, III]{Jo62},
\cite[Ch.~III]{JR76},
\cite[Sect.~11.9]{Lu22},
\cite[Sect.~15-19]{Na68},
\cite[Ch.~6]{Pe88},
\cite[Chs.~1--4, 6]{RK05},
\cite[Ch.~15]{Sc12},
\cite[Ch.~9]{Te14},
\cite[Sects.~3--7]{We87},
\cite[Ch.~13]{We03},
\cite[Sect.~8.4]{We80},
\cite[Ch.~5]{Yo91}, 
and
\cite[Chs.~7--10]{Ze05}.

The material in this paper is presented in a systematic and straightforward way.
A brief review of Sturm--Liouville theory is given in Section \ref{s2}.
The description of all self-adjoint extensions by means of
\textit{generalized boundary values} can be found in
Propositions \ref{23.t7.t3.12}, \ref{p4.1}, and \ref{p2.14},
depending on the endpoints being in the limit circle case or
in the limit point case. Section \ref{s2} also briefly surveys the history of the notion
of generalized boundary values (cf.\ Remark~\ref{r2.15}).  
In each of our principal Sections \ref{s3}, \ref{s4}, and \ref{s5},
one can find a systematic description of the quadratic forms
corresponding to the self-adjoint extensions in the various cases;
see Theorems \ref{case0}, \ref{case1}, \ref{caselp},
and \ref{caselplp}. 
The results are obtained via integration by parts of 
the expression $(f,T_{max}g)$ for $f, g \in \dom (T_{max})$, where $T_{max}$ denotes the maximal operator associated to \eqref{1.2}; see Lemma \ref{l3.3c} and Lemma \ref{l4.4}.  
This yields an alternative and very explicit formulation of the results in \cite[Ch.~6]{BHS20} in terms of generalized boundary values. These results generalize those of \cite[Sect.~4.5]{GNZ24} in the special case where $\tau$ is regular at $a$ and $b$.
The presentation is for the most part self-contained.
 For completeness and convenience of the reader,  we identify in Appendix \ref{{sA}}
the boundary triplet and the boundary pair used in  \cite{BHS20} to obtain 
the general formulation of the main results in Sections \ref{s3} and \ref{s4}.
In the appendix the emphasis is on the abstract analogue of  Lemma \ref{l3.3c} and Lemma \ref{l4.4}.
 The abstract results also lead to a description of  the Friedrichs extension in each of these sections by means of a boundary pair. 
 
\medskip

We conclude this introduction by briefly commenting on some of the notation employed in the bulk of this paper: The inner product in a separable (complex) Hilbert space $\cH$ is denoted by $(\,\cdot\,,\,\cdot\,)_{\cH}$ and is assumed to be linear with respect to the second argument.   If $T$ is a linear operator mapping (a subspace of) a Hilbert space into another, then $\dom(T)$, $\ran(T)$, and $\ker(T)$ denote the domain, range, and kernel (i.e., null space) of $T$, respectively. The analogous conventions are used for linear relations and sesquilinear forms (when applicable); in particular, the multi-valued part of a linear relation $T$ is denoted by $\mul(T)$.  Finally, $SL(2,\bbR)$ denotes the set of all $2\times 2$ matrices with real-valued entries and determinant one.

\section{Sturm--Liouville Operators, Generalized Boundary Values, and Self-Adjoint Realizations} \lb{s2}

The following hypothesis will be assumed throughout this paper.

\begin{hypothesis} \lb{h2.1}
Let $-\infty\leq a<b\leq \infty$.  Suppose that $p$, $q$, and $r$ are Lebesgue measurable on $(a,b)$ with $p^{-1}, q, r\in L^1_{loc}((a,b); dx)$ and real-valued a.e.~on $(a,b)$ with $r>0$ and $p>0$  a.e.~on $(a,b)$.
\end{hypothesis}

We recall the basic construction and properties of Sturm--Liouville differential expressions and their associated operators.  For a full treatment with proofs of the assertions in this section, we refer to \cite[Chapter 5]{GNZ24}.

Assuming Hypothesis \ref{h2.1}, we introduce the set
\begin{equation}\lb{2.1}
\mathfrak{D}_{\tau}((a,b))=\big\{g\in AC_{loc}((a,b))\, \big|\, g^{[1]}=p g' \in AC_{loc}((a,b))\big\}
\end{equation}
and the differential expression $\tau$ defined by
\begin{equation}
 \tau f = \frac{1}{r} \Big[ - \big(f^{[1]}\big)' + qf\Big] \in L_{loc}^1((a,b);r\,dx),\quad f\in \mathfrak{D}_{\tau}((a,b)),  \lb{2.2}
\end{equation}
where the expression
\begin{equation}
f^{[1]}=p f', \quad f \in \mathfrak{D}_{\tau}((a,b)),    \lb{2.3}
\end{equation}
is the \textit{first quasi-derivative} of $f$.  For each $f,g\in \mathfrak{D}_{\tau}((a,b))$, the (modified) Wronskian of $f$ and $g$ is defined by
\begin{equation}\lb{2.4}
W(f,g)(x) = f(x)g^{[1]}(x) - f^{[1]}(x)g(x),\quad x\in (a,b).
\end{equation}
Hence, $W(f,g)$ is locally absolutely continuous on $(a,b)$ and its derivative is
\begin{equation}
W(f,g)'(x) = \big[g(x) (\tau f)(x) - f(x) (\tau g)(x)\big] r(x) \, \text{ for a.e.~$x\in(a,b)$.}
\end{equation}
In particular, if $z\in \bbC$, then the Wronskian of two solutions $u_j(z,\,\cdot\,)\in \mathfrak{D}_{\tau}((a,b))$, $j\in\{1,2\}$, of $\tau u = z u$ on $(a,b)$ is constant.  Moreover, $W(u_1(z,\,\cdot\,),u_2(z,\,\cdot\,))\neq 0$ if and only if $u_1(z,\,\cdot\,)$ and $u_2(z,\,\cdot\,)$ are linearly independent.

\begin{definition} \lb{d2.2}
The differential expression $\tau$ is said to be \textit{regular} on $(a,b)$ if $-\infty<a<b<\infty$ $($i.e., $a$ and $b$ are finite$)$ and $p^{-1}, q, r\in L^1((a,b); dx)$; otherwise, $\tau$ is said to be \textit{singular} on $(a,b)$.
\end{definition}

If $\tau$ is regular on $(a,b)$, then for each $f\in \mathfrak{D}_{\tau}((a,b))$ the following limits exist and are finite:
\begin{equation}\lb{2.6}
\begin{split}
&f(a):=\lim_{x\downarrow a}f(x),\quad f^{[1]}(a):=\lim_{x\downarrow a}f^{[1]}(x),\\
&f(b):=\lim_{x\uparrow b}f(x),\quad f^{[1]}(b):=\lim_{x\uparrow b}f^{[1]}(x).
\end{split}
\end{equation}

The differential expression $\tau$ gives rise to linear operators in the Hilbert space $\Lr$ equipped with the standard inner product
\begin{equation}
(f,g)_{L^2((a,b); r\,dx)} = \int_a^b r(x)\, dx\, \overline{f(x)}g(x),\quad f,g\in \Lr.
\end{equation}
The \textit{maximal operator} associated to $\tau$ is denoted by $T_{max}$ and is defined by
\begin{align}
&T_{max}f = \tau f,\lb{2.8}\\
&f\in \dom(T_{max})=\big\{g\in \Lr\,\big|\, g\in \mathfrak{D}_{\tau}((a,b)),\, \tau g\in \Lr\big\}.\no
\end{align}
Furthermore, the Wronskian of any two functions $f,g\in \dom(T_{max})$ possesses finite boundary values at the endpoints of $(a,b)$; that is, the following limits exist and are finite:
\begin{equation}\lb{2.9}
W(f,g)(a) := \lim_{x\downarrow a}W(f,g)(x),\quad W(f,g)(b) := \lim_{x\uparrow b}W(f,g)(x).
\end{equation}
The \textit{pre-minimal operator} associated to $\tau$ is denoted by $\dot T$ and is defined by
\begin{align}
& \dot T f = \tau f,\lb{2.10}\\
& f \in \dom\big(\dot T\big) = \big\{ g\in\dom(T_{max}) \,|\, g \text{ has compact support in }(a,b)\big\}.\no
\end{align}
One can show that the operator $\dot T$ is densely defined and symmetric in the Hilbert space $\Lr$ and $\big(\dot T\big)^*=T_{max}$.  The \textit{minimal operator} associated to $\tau$ is denoted by $T_{min}$ and is defined to be the closure of the pre-minimal operator:
\begin{equation}
T_{min} := \overline{\dot T}.     \lb{2.11}
\end{equation}
In addition, $T_{min}$ and $T_{max}$ are adjoint to one another:
\begin{equation}\lb{2.12}
T_{min}^*=T_{max}\quad \text{and}\quad T_{max}^*=T_{min}.
\end{equation}

\begin{definition}
A measurable function $f:(a,b)\to \bbC$ is in $\Lr$ near $a$ $($resp., $b$$)$ if $\chi_{(a,c)}f$ $($resp., $\chi_{(c,b)}f$$)$ belongs to $\Lr$ for some $c\in (a,b)$.
\end{definition}

\begin{proposition}[Weyl's Alternative]\lb{t2.3}
Assume Hypothesis \ref{h2.1}. Then the following alternative holds: Either \\[1mm]
$(i)$ for every $z\in\bbC$, all solutions $u$ of $\tau u = z u$ are in $\Lr$ near $b$
$($resp., near $a$$)$, \\[1mm]
or, \\[1mm]
$(ii)$  for every $z\in\bbC$, there exists at least one solution $u$ of $\tau u = z u$ which is not in $\Lr$ near $b$ $($resp., near $a$$)$. In this case, for each $z\in\bbC\bs\bbR$, there exists precisely one solution $\psi_b$ $($resp., $\psi_a$$)$ of $\tau u = z u$ $($up to constant multiples$)$ which lies in $\Lr$ near $b$ $($resp., near $a$$)$.
\end{proposition}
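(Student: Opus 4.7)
The plan is to treat the endpoint $b$; the case of the endpoint $a$ is handled by an identical argument on $(a, c)$. I would organize the proof in three parts: a $z$-invariance lemma, an existence-of-$L^2$-solution result via Weyl's nested disks, and a short uniqueness argument.

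For the invariance step, suppose that for some $z_0 \in \bbC$ every solution of $\tau u = z_0 u$ is in $\Lr$ near $b$. Fix a fundamental system $\{u_1(z_0, \cdot), u_2(z_0, \cdot)\}$ of $\tau u = z_0 u$ with $W(u_1(z_0, \cdot), u_2(z_0, \cdot)) = 1$. Variation of parameters then rewrites any solution $u(z, \cdot)$ of $\tau u = z u$ as the Volterra-type integral equation
\begin{equation*}
u(z,x) = h(x) + (z - z_0) \int_c^x \bigl[u_1(z_0, x) u_2(z_0, t) - u_2(z_0, x) u_1(z_0, t)\bigr] u(z, t) r(t)\, dt,
\end{equation*}
where $h$ is a suitable solution of $\tau u = z_0 u$. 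Taking $L^2((c, b'); r\,dx)$ norms, then applying the Cauchy--Schwarz inequality and Gronwall's inequality, would yield $\norm{u(z,\cdot)}_{L^2((c, b'); r\, dx)} \le C$ uniformly in $b' \in (c, b)$, so $u(z, \cdot) \in \Lr$ near $b$. By symmetry in $z$ and $z_0$, either every solution is $\Lr$ near $b$ for every $z \in \bbC$, or no such $z_0$ exists, which already produces the dichotomy.

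Next, for every $z \in \bbC \setminus \bbR$ I would produce at least one $\Lr$ solution near $b$ via Weyl's nested-disks construction. Let $\theta(z, \cdot)$ and $\phi(z, \cdot)$ be the solutions of $\tau u = z u$ with $\theta(z, c) = \phi^{[1]}(z, c) = 1$ and $\phi(z, c) = \theta^{[1]}(z, c) = 0$. For each $b' \in (c, b)$ and $\beta \in [0, \pi)$, the condition
\begin{equation*}
\cos\beta \cdot [\theta(z, b') + m \phi(z, b')] + \sin\beta \cdot [\theta^{[1]}(z, b') + m \phi^{[1]}(z, b')] = 0
\end{equation*}
defines $m = m(z, b', \beta)$, and as $\beta$ varies, $m$ traces a circle $C_{b'}$ in $\bbC$. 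The Green-type identity
\begin{equation*}
W(u_\beta, \ol{u_\beta})(b') - W(u_\beta, \ol{u_\beta})(c) = 2 i \, \Im(z) \int_c^{b'} |u_\beta(x)|^2 r(x)\, dx,
\end{equation*}
applied to $u_\beta = \theta + m \phi$, together with the initial values $u_\beta(c) = 1$, $u_\beta^{[1]}(c) = m$ (which give $W(u_\beta, \ol{u_\beta})(c) = -2i \Im(m)$), converts the real boundary condition at $b'$ (under which $W(u_\beta, \ol{u_\beta})(b') = 0$) into the disk inequality
\begin{equation*}
\int_c^{b'} |\theta(z,x) + m \phi(z,x)|^2 r(x)\, dx \le \frac{\Im(m)}{\Im(z)}
\end{equation*}
for $m$ in the closed disk $D_{b'}$ bounded by $C_{b'}$. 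This shows that the disks $D_{b'}$ are nested as $b' \uparrow b$ with uniformly bounded radii, so their intersection is nonempty, and any $m_\infty$ in it yields $\theta(z,\cdot) + m_\infty \phi(z,\cdot) \in \Lr$ near $b$.

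In case (ii) at least one solution of $\tau u = z u$ fails to lie in $\Lr$ near $b$ by hypothesis; hence at most one linearly independent solution can be in $\Lr$ near $b$, since any two would span all solutions. Combined with the existence from the previous step, this yields, for every $z \in \bbC \setminus \bbR$, precisely one solution in $\Lr$ near $b$ up to scalar multiples. The main obstacle is the nested-disks construction: the Green identity computation and the verification that the disks nest (which crucially uses $\Im z \neq 0$) form the delicate analytic heart of the argument, whereas the invariance step is a routine Volterra--Gronwall exercise and the uniqueness assertion is immediate from linearity.
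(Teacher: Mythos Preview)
The paper does not prove this proposition; it is stated as background and the reader is referred to \cite[Chapter~5]{GNZ24} for a full treatment. Your proposal is correct and follows exactly the classical route (variation-of-parameters invariance in $z$, Weyl's nested-disk construction for existence when $\Im z\neq 0$, and linear-algebra uniqueness), which is precisely the argument one finds in the cited reference and in standard texts such as \cite{We87}, \cite{Te14}, or \cite{CL85}.

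One small point worth tightening if you write this out in full: in the invariance step the Gronwall argument is not literally of the form $f'\le C f$; the standard execution sets $m(x)=\big(\int_c^x |u(z,t)|^2 r(t)\,dt\big)^{1/2}$, uses Cauchy--Schwarz on the Volterra kernel to obtain an inequality of the shape $m(x)^2\le A + B\,m(x)$ with $B$ made small by choosing $c$ close to $b$ (so that $\int_c^b(|u_1|^2+|u_2|^2)r\,dx$ is small), and then solves the resulting quadratic inequality. Your sketch is fine as a plan, but ``Gronwall'' is shorthand here rather than a direct application.
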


\begin{definition} \lb{d4.10} Assume Hypothesis \ref{h2.1}.  In case $(i)$ in Proposition \ref{t2.3}, $\tau$ is said to be in the \textit{limit circle case} at $b$ $($resp., $a$$)$.  In case $(ii)$ in Proposition \ref{t2.3}, $\tau$ is said to be in the \textit{limit point case} at $b$ $($resp., $a$$)$.
\end{definition}

\begin{remark}
If $\tau$ is in the limit circle case at $b$ $($resp., $a$$)$, then $\tau$ is frequently called \textit{quasi-regular} at $b$ $($resp., $a$$)$.  If $\tau$ is in the limit circle case at both $a$ and $b$, then $\tau$ is frequently also called \textit{quasi-regular}.\hfill$\diamond$
\end{remark}

We recall that $T_{min}$ is \textit{lower semibounded} or \textit{bounded from below by $\lambda_0$}, and one writes $T_{min} \geq \lambda_0 I_{\Lr}$ (in this case, $\lambda_0$ is called a \textit{lower bound} of $T_{min}$), if
\begin{equation}\lb{2.13j}
(u,T_{min}u)_{\Lr}\geq \lambda_0(u,u)_{\Lr}, \quad u \in \dom(T_{min}).
\end{equation}
In particular, the \textit{lower bound of $T_{min}$} is the largest of all the lower bounds $\lambda_0$ for which \eqref{2.13j} holds.

The lower semiboundedness property of $T_{min}$ (equivalently, $\dot T$) is connected to the existence of distinguished nonoscillatory solutions, the so-called \textit{principal} and \textit{nonprincipal} solutions, at the endpoints $a$ and $b$.

\begin{definition} \lb{23.d2.4}
Assume Hypothesis \ref{h2.1} and fix $c\in (a,b)$ and $\lambda\in\bbR$. The differential expression $\tau - \lam$ is called {\it nonoscillatory at} $a$ $($resp., $b$$)$, if there exists a real-valued solution $u(\lambda,\dott)$ of
$\tau u = \lambda u$ that has finitely many zeros in $(a,c)$ $($resp., $(c,b)$$)$. Otherwise, $\tau - \lam$ is called {\it oscillatory at} $a$ $($resp., $b$$)$.  If $\tau - \lam$ is nonoscillatory at $a$ and $b$, one calls $\tau - \lam$ {\it nonoscillatory on $(a,b)$}. In addition, $\tau - \lambda$ is called {\it oscillatory on $(a,b)$} if it is oscillatory at least at one of the endpoints $a$ or $b$.
\end{definition}

\begin{proposition} \lb{23.t2.5}
Assume Hypothesis \ref{h2.1} and let $\lambda_0\in \bbR$. Then the following items $(i)$--\,$(iii)$ are
equivalent\,$:$ \\[1mm]
$(i)$ $T_{min}$ is bounded from below by $\lambda_0$; that is, $T_{min}\geq \lambda_0I_{\Lr}$.
\\[1mm]
$(ii)$ For all $\lambda \leq \la_0$, $\tau - \lam$ is nonoscillatory at $a$ and $b$.
\\[1mm]
$(iii)$ For all $\lambda\leq\la_0$, $\tau u = \lambda u$ has, for some $c_0,d_0\in (a,b)$, real-valued nonvanishing solutions $u_a(\lambda,\dott)$ and $\hatt u_a(\lambda,\dott)$ in the interval $(a,c_0]$, and real-valued nonvanishing solutions
$u_b(\lambda,\dott)$ and $\hatt u_b(\lambda,\dott)$ in the interval $[d_0,b)$, such that
\begin{align}
&W(u_a (\lambda,\dott),\hatt u_a (\lambda,\dott)) = 1,
\quad u_a (\lambda,x)=\oh(\hatt u_a (\lambda,x))
\text{ as $x\downarrow a$,} \lb{2.14} \\
&W(u_b (\lambda,\dott),\hatt u_b (\lambda,\dott))\, = 1,
\quad u_b (\lambda,x)\,=\oh(\hatt u_b (\lambda,x))
\text{ as $x\uparrow b$,} \lb{2.15}
\end{align}
and for all $c\in (a,c_0]$ and $d\in [d_0,b)$,
\begin{align}
&\int_a^c dx \, p(x)^{-1}u_a(\lambda,x)^{-2}=\int_d^b dx \,
p(x)^{-1}u_b(\lambda,x)^{-2}=\infty,  \lb{2.16} \\
&\int_a^c dx \, p(x)^{-1}{\hatt u_a(\lambda,x)}^{-2}<\infty, \quad
\int_d^b dx \, p(x)^{-1}{\hatt u_b(\lambda,x)}^{-2}<\infty. \lb{2.17}
\end{align}
\end{proposition}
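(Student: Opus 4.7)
The plan is to prove the cyclic chain $(iii) \Rightarrow (i) \Rightarrow (ii) \Rightarrow (iii)$, with the hard step being the passage between operator semiboundedness and nonoscillation; the remaining steps rest on a Liouville--Green/Rofe-Beketov factorization of the associated quadratic form and on the classical Hartman principal/nonprincipal solution theory.

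For $(iii) \Rightarrow (i)$, fix $f \in \dom(\dot T)$ supported in $(a, c_0)$ and substitute $f = u_a(\la_0, \dott)\, g$, where $g := f/u_a(\la_0, \dott)$ is well defined by the nonvanishing of $u_a(\la_0, \dott)$ on $(a, c_0]$. Combining $\tau u_a(\la_0, \dott) = \la_0 u_a(\la_0, \dott)$ with the elementary identity $\big(p\, u_a(\la_0, \dott)\, u_a'(\la_0, \dott)\big)' = p\, (u_a'(\la_0, \dott))^2 + q\, u_a(\la_0, \dott)^2 - \la_0 r\, u_a(\la_0, \dott)^2$ produces the pointwise Rofe-Beketov factorization
\begin{equation*}
p|f'|^2 + q|f|^2 - \la_0 r |f|^2 = \bigg(\frac{u_a^{[1]}(\la_0, \dott)}{u_a(\la_0, \dott)} |f|^2 \bigg)' + p\, u_a(\la_0, \dott)^2 |g'|^2,
\end{equation*}
and integrating on $(a, c_0]$ eliminates the total-derivative term by compact support of $f$, leaving the nonnegative residue $\int p\, u_a(\la_0,\dott)^2 |g'|^2 dx \geq 0$. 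The analogous identity based on $u_b(\la_0, \dott)$ handles supports in $[d_0, b)$. For a general $f \in \dom(\dot T)$, a smooth partition of unity subordinate to a covering $(a, c_0] \cup [c', d'] \cup [d_0, b)$ of $\supp(f)$ localizes the estimate to the two endpoint pieces plus the inner regular interval $[c', d']$, on which lower semiboundedness is standard; density of $\dom(\dot T)$ in $\dom(T_{min})$ then propagates $(f, T_{min} f)_{\Lr} \geq \la_0 \|f\|_{\Lr}^2$ to all of $\dom(T_{min})$.

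For $(i) \Rightarrow (ii)$, integration by parts on compactly supported test functions converts $T_{min} \geq \la_0 I_{\Lr}$ into the variational inequality
\begin{equation*}
\int_a^b \big(p |f'|^2 + q|f|^2\big)\, dx \geq \la_0 \int_a^b r |f|^2\, dx, \quad f \in C_c^\infty((a,b)).
\end{equation*}
By Jacobi's disconjugacy theorem, this positivity forces $\tau - \la_0$ to admit a strictly positive $C^1$ solution on $(a, b)$, which is manifestly nonoscillatory at both endpoints; Sturm's comparison theorem then propagates nonoscillation to all $\la \leq \la_0$. Finally, $(ii) \Leftrightarrow (iii)$ is the Hartman classification: at any endpoint of nonoscillation, the solution space of $\tau u = \la u$ decomposes into a one-dimensional \emph{principal} subspace characterized by the divergence in \eqref{2.16} together with the $o$-asymptotics \eqref{2.14}--\eqref{2.15}, complemented by the \emph{nonprincipal} solutions obeying \eqref{2.17}; rescaling and sign choices normalize the Wronskian to $1$. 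Complete details appear in \cite[Ch.~5]{GNZ24}.

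The principal obstacle is the Jacobi disconjugacy step in $(i) \Rightarrow (ii)$, where one must pass from nonnegativity of the quadratic form on $C_c^\infty((a, b))$ to the existence of a global positive solution of $\tau u = \la_0 u$ on $(a, b)$. The classical argument excludes conjugate pairs $\alpha < \beta$ in $(a, b)$ by using (a mollification of) the solution vanishing at both $\alpha, \beta$ as an admissible test function saturating the inequality, then contradicts strict positivity under small perturbations, and finally constructs the global positive solution as a monotone limit of positive solutions on an exhausting family of subintervals of $(a, b)$.
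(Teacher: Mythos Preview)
The paper does not supply its own proof of this proposition; Section~\ref{s2} is a preliminaries section that records results without argument and refers the reader to \cite[Ch.~5]{GNZ24} for complete proofs. So there is no in-paper proof to compare against, and your sketch (which itself cites \cite{GNZ24}) is in the same spirit.

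That said, your argument for $(iii) \Rightarrow (i)$ has a genuine gap. The Rofe-Beketov/Jacobi factorization is correct for $f$ supported in $(a,c_0)$ or in $(d_0,b)$, but the partition-of-unity step for general $f\in\dom(\dot T)$ does not deliver the bound $\lambda_0$. Localizing via $\sum_j\phi_j^2=1$ (the IMS formula in the Schr\"odinger case) produces a commutator error of order $\sup\sum_j p\,|\phi_j'|^2$, and on the compact middle block $[c',d']$ the Dirichlet realization is bounded below only by \emph{some} constant, not by $\lambda_0$. Both corrections can be strictly negative: take $\tau=-d^2/dx^2+V$ on $\bbR$ with $V\le 0$ smooth, compactly supported, and deep enough to create a bound state at $E_0<0$; then $\tau-\lambda$ is nonoscillatory at $\pm\infty$ for every $\lambda\le 0$ (solutions are exponential or linear there), the principal/nonprincipal structure at each endpoint is available, yet your partition-of-unity estimate only yields $T_{min}\ge(\lambda_0-C)I$ for some $C>0$ coming from the middle block, not $T_{min}\ge\lambda_0 I$. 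The route that actually closes this implication is not local: one must upgrade the near-endpoint positive solutions to a single solution of $(\tau-\lambda_0)u=0$ that is positive on \emph{all} of $(a,b)$ (disconjugacy), after which the factorization applies globally to every $f\in\dom(\dot T)$ with no localization error. Your sketch does not contain this global-positive-solution step.
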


\begin{definition}
Assume Hypothesis \ref{h2.1}, suppose that $T_{min}$ is bounded from below by $\lambda_0\in \bbR$ and let $\lambda\leq \lambda_0$. Then
$u_a(\lambda,\dott)$ $($resp., $u_b(\lambda,\dott)$$)$ in Proposition
\ref{23.t2.5}\,$(iii)$ is called a {\it principal} $($or {\it minimal}\,$)$
solution of $\tau u=\lambda u$ at $a$ $($resp., $b$$)$.
A real-valued solution ${\hatt u}_a(\lambda,\dott)$ $($resp., ${\hatt u}_b(\lambda,\dott)$$)$ of $\tau u=\lambda u$ linearly independent of $u_a(\lambda,\dott)$ $($resp., $u_b(\lambda,\dott)$$)$ is called a {\it nonprincipal} solution of $\tau u = \lambda u$ at $a$ $($resp., $b$$)$.
\end{definition}

Following \cite{GLN20} and \cite[Sect.~13.4]{GNZ24}, the next result introduces generalized boundary values at the endpoints $a$ and $b$ for functions belonging to $\dom(T_{max})$.

\begin{proposition}[Generalized boundary values]\lb{23.t7.3.11}
Assume Hypothesis \ref{h2.1} and let $\tau$ be in the limit circle case at $a$ and $b$ $($i.e., $\tau$ is quasi-regular on $(a,b)$$)$. In addition, assume that $T_{min} \geq \lambda_0 I_{\Lr}$ for some $\lambda_0 \in \bbR$, and denote by $u_t(\lambda_0,\dott)$ and $\widehat u_t(\lambda_0,\dott)$ principal and nonprincipal solutions of $\tau u = \lambda_0 u$ on $(a,b)$, respectively, at $t\in \{a,b\}$ that satisfy
\begin{equation}
W(\hatt u_a(\lambda_0,\dott), u_a(\lambda_0,\dott))
= W(\hatt u_b(\lambda_0,\dott), u_b(\lambda_0,\dott)) = 1.
\lb{7.3.33AB}
\end{equation}
Introducing $v_j \in \dom(T_{max})$, $j=1,2$, via
\begin{align}
v_1(x) = \begin{cases} \hatt u_a(\lambda_0,x), & \text{for $x$ near a}, \\
\hatt u_b(\lambda_0,x), & \text{for $x$ near b},  \end{cases}   \quad
v_2(x) = \begin{cases} u_a(\lambda_0,x), & \text{for $x$ near a}, \\
u_b(\lambda_0,x), & \text{for $x$ near b},  \end{cases}   \lb{7.3.33A}
\end{align}
for each $g \in \dom(T_{max})$, the following limits exist and are finite:
\begin{align}
\begin{split}
\wti g(a) &:= - W(v_2, g)(a) =  - W(u_a(\lambda_0,\dott), g)(a)
= \lim_{x \downarrow a} \f{g(x)}{\hatt u_a(\lambda_0,x)},    \\
\wti g(b) &:= - W(v_2, g)(b) =  - W(u_b(\lambda_0,\dott), g)(b) = \lim_{x \uparrow b} \f{g(x)}{\hatt u_b(\lambda_0,x)},
\end{split} \lb{7.3.34A} \\
\begin{split}
{\wti g}^{\, \prime}(a) &:= W(v_1, g)(a) = W(\hatt u_a(\lambda_0,\dott), g)(a)
= \lim_{x \downarrow a} \f{g(x) - \wti g(a) \hatt u_a(\lambda_0,x)}{u_a(\lambda_0,x)},    \\
{\wti g}^{\, \prime}(b) &:= W(v_1, g)(b) = W(\hatt u_b(\lambda_0,\dott), g)(b)
= \lim_{x \uparrow b} \f{g(x) - \wti g(b) \hatt u_b(\lambda_0,x)}{u_b(\lambda_0,x)}.   \lb{7.3.34B}
\end{split}
\end{align}
\end{proposition}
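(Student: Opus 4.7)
The proof has three stages. First, the locally-defined functions $v_1, v_2$ of \eqref{7.3.33A} extend to bona fide elements of $\dom(T_{max})$ via smooth patching (e.g., a partition of unity subordinate to a covering of $(a,b)$ by neighborhoods of $a$, $b$, and a relatively compact middle piece); the specific patching is irrelevant because modifications inside a compact subinterval of $(a,b)$ are compactly supported elements of $\dom(\dot T)$, which do not affect boundary behavior of Wronskians. Thus \eqref{2.9} guarantees the existence and finiteness of $W(v_j, g)(a)$ and $W(v_j, g)(b)$, and the local coincidences $v_1 \equiv \hatt u_a(\lambda_0, \dott)$, $v_2 \equiv u_a(\lambda_0, \dott)$ near $a$ (and analogously at $b$) identify these limits as the Wronskian expressions in the middle of \eqref{7.3.34A} and \eqref{7.3.34B}. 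Furthermore, the normalization $W(\hatt u_a(\lambda_0, \dott), u_a(\lambda_0, \dott)) = 1$ yields, by direct computation, the pointwise decomposition
\begin{equation*}
g(x) = W(\hatt u_a(\lambda_0, \dott), g)(x)\, u_a(\lambda_0, x) - W(u_a(\lambda_0, \dott), g)(x)\, \hatt u_a(\lambda_0, x),
\end{equation*}
valid for every $g \in \mathfrak{D}_\tau((a,b))$ and $x$ near $a$, with an analogous identity near $b$.

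For the $\wti g$ limits, divide the preceding decomposition by $\hatt u_a(\lambda_0, x)$ and pass to the limit $x \downarrow a$, using the principal-solution asymptotic $u_a(\lambda_0, x) = \oh(\hatt u_a(\lambda_0, x))$ from Proposition~\ref{23.t2.5}\,(iii) together with the existence of the Wronskian boundary values just established. This delivers $\lim_{x \downarrow a} g(x)/\hatt u_a(\lambda_0, x) = -W(u_a(\lambda_0, \dott), g)(a)$, with an identical argument at $b$.

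For the ${\wti g}^{\, \prime}$ limits, set $h := g - \wti g(a) \hatt u_a(\lambda_0, \dott)$ locally near $a$; then $h$ lies in $\dom(T_{max})$ locally near $a$ and a short Wronskian calculation yields $W(u_a(\lambda_0, \dott), h)(a) = 0$. Applying the decomposition to $h$ and dividing by $u_a(\lambda_0, x)$ produces
\begin{equation*}
\f{h(x)}{u_a(\lambda_0, x)} = W(\hatt u_a(\lambda_0, \dott), h)(x) - W(u_a(\lambda_0, \dott), h)(x) \f{\hatt u_a(\lambda_0, x)}{u_a(\lambda_0, x)},
\end{equation*}
in which the first term on the right converges to $W(\hatt u_a(\lambda_0, \dott), g)(a)$ as $x \downarrow a$. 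The principal obstacle is to show that the second term vanishes. For this, the Lagrange identity combined with $W(u_a, h)(a) = 0$ gives
\begin{equation*}
W(u_a(\lambda_0, \dott), h)(x) = \int_a^x u_a(\lambda_0, t) [\lambda_0 g(t) - (\tau g)(t)] \, r(t) \, dt,
\end{equation*}
and Cauchy--Schwarz yields $|W(u_a, h)(x)| \leq \|u_a(\lambda_0, \dott)\|_{L^2((a,x); r\, dx)} \cdot \|\lambda_0 g - \tau g\|_{L^2((a,x); r\, dx)}$, whose second factor tends to zero. Combining this with the Liouville reduction-of-order representation $\hatt u_a(\lambda_0, x)/u_a(\lambda_0, x) = \int_x^{c_0} [p(t) u_a(\lambda_0, t)^2]^{-1} \, dt + \text{const.}$ and exploiting the $L^2((a, c); r\, dx)$-integrability of $\hatt u_a(\lambda_0, \dott)$ (limit circle at $a$), the factor $\|u_a(\lambda_0, \dott)\|_{L^2((a,x); r\, dx)} \cdot \hatt u_a(\lambda_0, x)/u_a(\lambda_0, x)$ is controlled by a monotone-tail argument applied to the convergent integral $\int_a^{c_0} \hatt u_a(\lambda_0, t)^2 \, r(t) \, dt$, so the product vanishes. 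The symmetric analysis at $b$ completes the proof.
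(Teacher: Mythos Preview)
The paper does not supply its own proof of this proposition; it is stated with a citation to \cite{GLN20} and \cite[Sect.~13.4]{GNZ24}. Your argument is therefore being compared against the referenced literature rather than against anything in the paper itself.

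Your proof is correct in substance. The decomposition $g = W(\hatt u_a, g)\,u_a - W(u_a, g)\,\hatt u_a$ combined with $u_a = \oh(\hatt u_a)$ handles $\wti g(a)$ cleanly. For $\wti g^{\,\prime}(a)$, your Lagrange-identity/Cauchy--Schwarz estimate together with the monotonicity of $\hatt u_a/u_a$ does the job: since $(\hatt u_a/u_a)' = -1/(p u_a^2) < 0$ near $a$, one has $u_a(t) \le \hatt u_a(t)\,u_a(x)/\hatt u_a(x)$ for $t \in (a,x)$, whence $\|u_a\|_{L^2((a,x);r\,dx)} \cdot \hatt u_a(x)/u_a(x) \le \|\hatt u_a\|_{L^2((a,x);r\,dx)} \to 0$; this is exactly the ``monotone-tail'' argument you invoke, and it is sound.

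One technical caveat: the smooth partition-of-unity patching you propose for $v_1, v_2$ need not land in $\mathfrak{D}_\tau$, because under Hypothesis~\ref{h2.1} the coefficient $p$ is not assumed locally bounded (only $1/p \in L^1_{loc}$), so the term $p\phi' f$ appearing in $(\phi f)^{[1]} = \phi f^{[1]} + p\phi' f$ can fail to be locally absolutely continuous. This is harmless for the proof: since $\tau$ is limit circle at both endpoints, the global solutions $u_a, \hatt u_a, u_b, \hatt u_b$ of $\tau u = \lambda_0 u$ already lie in $\dom(T_{max})$, so \eqref{2.9} directly furnishes the Wronskian limits $W(u_a,g)(a)$, $W(\hatt u_a,g)(a)$, etc., without any patching. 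Likewise, defining $h := g - \wti g(a)\hatt u_a$ globally (rather than ``locally near $a$'') gives $h \in \dom(T_{max})$ outright. With these cosmetic adjustments your argument is complete.
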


\begin{definition}
The quantities $\wti g(c)$, $\wti g^{\, \prime}(c)$, $c\in \{a,b\}$, defined by \eqref{7.3.34A} and \eqref{7.3.34B} are called the \textit{generalized boundary values} of $g \in \dom(T_{max})$.
\end{definition}

If $\tau$ is in the limit circle case at both endpoints of $(a,b)$, then $T_{min}$ has deficiency indices $(2,2)$.  In this case, the self-adjoint extensions of $T_{min}$ are parametrized by boundary conditions at the endpoints of $(a,b)$ according to the next proposition.

\begin{proposition} \lb{23.t7.t3.12}
Assume Hypothesis \ref{h2.1} and let $\tau$ be in the limit circle case at $a$ and $b$.  In addition, assume that $T_{min} \geq \lambda_0 I_{\Lr}$ for some $\lambda_0 \in \bbR$ and that $u_t(\lambda_0,\dott)$ and $\widehat u_t(\lambda_0,\dott)$ are principal and nonprincipal solutions of $\tau u = \lambda_0 u$ on $(a,b)$, respectively, at $t\in \{a,b\}$ that satisfy \eqref{7.3.33AB}. Then, given \eqref{7.3.34A} and \eqref{7.3.34B}, the following items $(i)$--\,$(v)$ hold\,$:$ \\[1mm]
$(i)$ The minimal operator is characterized by
\begin{align}
\begin{split}
& T_{min} f = \tau f, \\
& f \in \dom(T_{min})= \big\{g\in\dom(T_{max})  \, \big| \, \wti g(a) = {\wti g}^{\, \prime}(a) =0
= \wti g(b) = {\wti g}^{\, \prime}(b)\big\}.      \lb{23.13.2.27b}
\end{split}
\end{align}
$(ii)$ All self-adjoint extensions $T_{\al,\be}$ of $T_{min}$ with separated boundary conditions are of the form
\begin{align}
\begin{split}
& T_{\al,\be} f = \tau f, \quad \al,\be \in[0,\pi),      \lb{23.13.2.27} \\
& f \in \dom(T_{\al,\be})=\left\{g\in\dom(T_{max}) \, \Bigg| \,
\begin{matrix} \sin(\al) {\wti g}^{\, \prime}(a)
+ \cos(\al) \wti g(a) = 0; \\ \sin(\be) {\wti g}^{\, \prime}(b) + \cos(\be) \wti g(b) = 0 \end{matrix} \right\}.
\end{split}
\end{align}
$(iii)$ All self-adjoint extensions $T_{\varphi,R}$ of $T_{min}$ with coupled boundary conditions are of the form
\begin{align}
\begin{split}
& T_{\varphi,R} f = \tau f,\quad \varphi \in [0,\pi),\, R \in SL(2,\bbR),  \\
& f \in \dom(T_{\varphi,R})=\left\{g\in\dom(T_{max}) \, \Bigg| \begin{pmatrix} \wti g(b)
\\ {\wti g}^{\, \prime}(b) \end{pmatrix} = e^{i\varphi}R \begin{pmatrix}
\wti g(a) \\ {\wti g}^{\, \prime}(a) \end{pmatrix} \right\}. \lb{23.13.2.27a}
\end{split}
\end{align}
$(iv)$ Every self-adjoint extension of $T_{min}$ is either of type $(ii)$ $($i.e., with separated boundary conditions\,$)$ or of type $(iii)$
$($i.e., with coupled boundary conditions\,$)$.\\[1mm]
$(v)$ The operator $T_{\alpha=0,\beta=0}$ is the Friedrichs extension of $T_{min}$.
\end{proposition}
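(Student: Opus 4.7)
The plan is to derive a Green/Lagrange identity expressed in terms of the generalized boundary values from Proposition~\ref{23.t7.3.11}, together with a surjectivity lemma for the boundary trace. Two integrations by parts give $(T_{max}f,g)_{\Lr}-(f,T_{max}g)_{\Lr}=W(\overline f,g)(b)-W(\overline f,g)(a)$ for $f,g\in\dom(T_{max})$. Applying the Pl\"ucker relation to the four functions $\widehat u_c(\lambda_0,\dott), u_c(\lambda_0,\dott), \overline f, g$ at the endpoint $c\in\{a,b\}$, together with the normalization \eqref{7.3.33AB} and the definitions \eqref{7.3.34A}, \eqref{7.3.34B}, one obtains
\begin{equation}
(T_{max}f,g)_{\Lr}-(f,T_{max}g)_{\Lr}=\big[\overline{\wti f(b)}\,{\wti g}^{\,\prime}(b)-\overline{{\wti f}^{\,\prime}(b)}\,\wti g(b)\big]-\big[\overline{\wti f(a)}\,{\wti g}^{\,\prime}(a)-\overline{{\wti f}^{\,\prime}(a)}\,\wti g(a)\big].
\lb{planeq}
\end{equation}
In parallel I would prove that the boundary trace $\Gamma: g\mapsto(\wti g(a),\wti g^{\,\prime}(a),\wti g(b),\wti g^{\,\prime}(b))$ maps $\dom(T_{max})$ onto $\bbC^4$, by localizing the functions $v_1, v_2$ from \eqref{7.3.33A} with smooth cutoffs that equal $1$ near the respective endpoint and are supported away from the other; the limit circle hypothesis at both $a$ and $b$ keeps these truncations in $\dom(T_{max})$.

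Item (i) follows immediately from $T_{min}=T_{max}^*$: $f\in\dom(T_{min})$ iff the left-hand side of \eqref{planeq} vanishes for every $g\in\dom(T_{max})$, and surjectivity of $\Gamma$ then forces $\wti f$ and $\wti f^{\,\prime}$ to vanish at both endpoints. For (ii), the separated conditions defining $\dom(T_{\alpha,\beta})$ constrain $(\wti g(a),\wti g^{\,\prime}(a))$ to the complex span of the vector $(-\sin\alpha,\cos\alpha)^T$, a one-dimensional Lagrangian for the Hermitian form $(\xi,\eta)\mapsto\overline{\xi_1}\eta_2-\overline{\xi_2}\eta_1$, and analogously at $b$; hence both brackets on the right of \eqref{planeq} vanish separately, so $T_{\alpha,\beta}$ is symmetric. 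A deficiency-index count (each of $\alpha,\beta$ cuts one complex dimension from the four-dimensional $\dom(T_{max})/\dom(T_{min})$) then upgrades symmetry to self-adjointness. Item (iii) is analogous: $R\in SL(2,\bbR)$ yields $R^T J R=\det(R)J=J$ with $J=\bigl(\begin{smallmatrix}0&1\\-1&0\end{smallmatrix}\bigr)$, and together with $|e^{i\varphi}|^2=1$ this makes the $b$-bracket equal to the $a$-bracket in \eqref{planeq}, so their difference vanishes.

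For (iv) I would view $\dom(T_{max})/\dom(T_{min})\cong\bbC^4$ as a skew-Hermitian symplectic space with the form on the right of \eqref{planeq} and with the natural decomposition $\bbC^4\cong\bbC^2_a\oplus\bbC^2_b$. Self-adjoint extensions of $T_{min}$ correspond bijectively to Lagrangian (maximally isotropic, two-dimensional) subspaces. A case analysis on whether the projections of a Lagrangian $L\subseteq\bbC^4$ onto $\bbC^2_a$ and $\bbC^2_b$ are proper or surjective yields the dichotomy: either $L$ splits as a direct sum of one-dimensional Lagrangians in each factor (giving the separated form in (ii)) or $L$ is the graph of an isomorphism $\bbC^2_a\to\bbC^2_b$, in which case the isotropy condition forces the matrix of this isomorphism to lie in $e^{i\varphi}\cdot SL(2,\bbR)$, recovering (iii).

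The main obstacle is (v), identifying $T_{\alpha=0,\beta=0}$ with the Friedrichs extension $T_F$. My plan is to invoke the variational characterization of $T_F$ as the unique lower semibounded self-adjoint extension whose form domain equals the completion of $\dom(T_{min})$ in the norm associated with $T_{min}-\lambda_0+1$. The Hardy-type limit formulas $\wti g(t)=\lim_{x\to t}g(x)/\widehat u_t(\lambda_0,x)$ in \eqref{7.3.34A} show that $\wti g(a)=\wti g(b)=0$, which is the case $\alpha=\beta=0$, is equivalent to the ``principal growth'' condition $g(x)=\oh(\widehat u_t(\lambda_0,x))$ at $t\in\{a,b\}$. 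The delicate step is to show that this principal-growth condition characterizes exactly the form closure of $\dom(T_{min})$; one uses the integrability asymmetry in \eqref{2.16}--\eqref{2.17} (namely $\int p^{-1} u_t^{-2}=\infty$ versus $\int p^{-1}\widehat u_t^{-2}<\infty$) to exclude any nonprincipal component from a limit of compactly supported functions at each endpoint. A sesquilinear form computation in the spirit of Lemma~\ref{l3.3c} of Section~\ref{s3} then rewrites the quadratic form of $T_{\alpha=0,\beta=0}$ in a manifestly bounded-below manner that identifies it with $T_F$.
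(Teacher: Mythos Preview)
The paper does not supply a proof of this proposition; it is stated as background material with references to \cite{GLN20} and \cite[Sect.~13.4]{GNZ24}. Your argument for items $(i)$--$(iv)$ is correct and is essentially the concrete version of the abstract boundary-triplet approach the paper sets up in Appendix~\ref{{sA}}: your Green identity \eqref{planeq} together with the surjectivity of $\Gamma$ says precisely that $\{\bbC^2,\Gamma_0,\Gamma_1\}$ with $\Gamma_0 g=(\wti g(a),\wti g(b))^\top$, $\Gamma_1 g=(\wti g^{\,\prime}(a),-\wti g^{\,\prime}(b))^\top$ is a boundary triplet for $T_{max}$ (cf.\ \eqref{bt0}), after which the parametrization of self-adjoint extensions by self-adjoint relations in $\bbC^2$ is standard. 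Your Lagrangian dichotomy for $(iv)$ is the same classification in symplectic language; one small point worth spelling out is that the isotropy condition $M^*JM=J$ on a graph matrix $M\in GL(2,\bbC)$ forces $|\det M|=1$, and after extracting the phase the remaining matrix lies in $SL(2,\bbC)$ and satisfies both $M^TJM=J$ and $M^*JM=J$, hence is real.

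For $(v)$ your plan is in the right spirit but leaves the genuinely hard step open: showing that the ``principal-growth'' condition $\wti g(a)=\wti g(b)=0$ on $\dom(T_{max})$ singles out exactly the Friedrichs extension requires knowing that the closure of the form of $T_{min}$ is described by $\ker\Lambda$ for $\Lambda g=(\wti g(a),\wti g(b))^\top$ on $\dom(\mathfrak{Q}_{c,d})$. The paper (Appendix~\ref{{sA}}, Lemma~\ref{bdpairlemma}) handles this via the boundary-pair machinery: one first verifies $\dom(T_{max})\subseteq\dom(\mathfrak{Q}_{c,d})$ and the infinitesimal bound \eqref{bt103}, which makes $\Lambda$ continuous in the form topology; then continuity plus $\Lambda\supset\Gamma_0$ forces $\dom(\st_{S_{\rm F}})\subseteq\ker\Lambda$, and a dimension count using \eqref{formdomS1} gives equality. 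This bypasses the direct analysis of form-closure you propose. Your alternative via the integrability asymmetry \eqref{2.16}--\eqref{2.17} can be made to work (this is the route in \cite{Ka78}, \cite{Ro85}, \cite{NZ92}), but it is not shorter, and the sentence ``a sesquilinear form computation in the spirit of Lemma~\ref{l3.3c}'' is circular here since Lemma~\ref{l3.3c} already presupposes the form $\mathfrak{Q}_{c,d}$ whose identification with the Friedrichs form is exactly what is at stake.
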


In the case when exactly one endpoint is in the limit circle case, the deficiency indices of $T_{min}$ are $(1,1)$.  The self-adjoint extensions of $T_{min}$ are then characterized by a separated boundary condition at the limit circle endpoint.  For simplicity of presentation, we assume in the following result that $\tau$ is in the limit circle case at $a$ (the case when $\tau$ is in the limit circle case at $b$ is entirely analogous).

\begin{proposition}\lb{p4.1} 
Assume Hypothesis \ref{h2.1} and let $\tau$ be in the limit circle case at $a$ and in the limit point case at $b$. In addition, assume that $T_{min} \geq \lambda_0 I_{\Lr}$ for some $\lambda_0 \in \bbR$ and that $u_a(\lambda_0,\dott)$ and $\widehat u_a(\lambda_0,\dott)$ are principal and nonprincipal solutions of $\tau u = \lambda_0 u$ on $(a,b)$, respectively, at $a$ that satisfy \eqref{7.3.33AB}.  Introduce the corresponding generalized boundary values according to \eqref{7.3.34A} and \eqref{7.3.34B}.  Then the following statements $(i)$--\,$(iii)$ hold:\\[1mm]
$(i)$  The domain of $T_{min}$ is characterized by
\begin{equation}
\dom(T_{min})=\{g\in \dom(T_{max})\,|\, \wti g^{\,\prime}(a)=\wti g(a)=0\}.
\end{equation}
$(ii)$ An operator $T$ in $\Lr$ is a self-adjoint extension of $T_{min}$ if and only if $T=T_{\al}$, for some $\alpha\in [0,\pi)$, where
\begin{align}
\begin{split}
& T_{\al} f = \tau f, \quad \al \in[0,\pi),     \\
& f \in \dom(T_{\al})=\big\{g\in\dom(T_{max}) \, \big| \, \sin(\al) {\wti g}^{\, \prime}(a)
+ \cos(\al) \wti g(a) = 0\big\}.
\end{split}
\end{align}
$(iii)$ The operator $T_{\alpha=0}$ is the Friedrichs extension of $T_{min}$.\\[1mm]
Results analogous to $(i)$--\,$(iii)$ hold if $\tau$ is in the limit point case at $x=a$ and in the limit circle case at $x=b$.
\end{proposition}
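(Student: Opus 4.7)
The overall strategy is to reduce the mixed limit point / limit circle setting to the computations already carried out (implicitly) for the quasi-regular case of Proposition \ref{23.t7.t3.12}, by exploiting the fact that in the limit point case at $b$ the boundary Wronskian automatically vanishes there. First I would observe that, although Proposition \ref{23.t7.3.11} is stated under the assumption that both endpoints are limit circle, the very definition of the generalized boundary values $\widetilde g(a)$ and $\widetilde g^{\,\prime}(a)$ and the proofs that the limits in \eqref{7.3.34A}--\eqref{7.3.34B} exist are purely local at $a$; they use only that $\tau$ is in the limit circle case at $a$ (together with $T_{min}\geq \lambda_0 I$, which secures the existence of the principal/nonprincipal solutions at $a$). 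Hence all boundary symbols at $a$ are well defined on $\dom(T_{max})$ in the present setting.

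The key identity I would establish is Green's formula,
\begin{equation*}
(T_{max}f,g)_{\Lr}-(f,T_{max}g)_{\Lr}=W(\overline{f},g)(b)-W(\overline{f},g)(a), \quad f,g\in\dom(T_{max}),
\end{equation*}
together with the standard limit point fact that $W(\overline{f},g)(b)=0$ for all $f,g\in\dom(T_{max})$ (this is a classical consequence of Weyl's alternative, and a careful statement is contained in the Sturm--Liouville background cited as \cite[Ch.~5]{GNZ24}). A short computation using the asymptotic expansions
$g(x)=\widetilde g(a)\hatt u_a(\lambda_0,x)+\widetilde g^{\,\prime}(a)u_a(\lambda_0,x)+o(u_a(\lambda_0,x))$ as $x\downarrow a$, combined with $W(\hatt u_a,u_a)=1$, yields
\begin{equation*}
W(\overline{f},g)(a)=\overline{\widetilde f(a)}\,\widetilde g^{\,\prime}(a)-\overline{\widetilde f^{\,\prime}(a)}\,\widetilde g(a).
\end{equation*}
Part (i) then follows directly from $T_{max}^*=T_{min}$: a function $g\in\dom(T_{max})$ lies in $\dom(T_{min})$ iff the left-hand side of Green's formula vanishes for every $f\in\dom(T_{max})$, iff the bilinear form above vanishes for all $(\widetilde f(a),\widetilde f^{\,\prime}(a))$, iff $\widetilde g(a)=\widetilde g^{\,\prime}(a)=0$, where surjectivity of the map $f\mapsto (\widetilde f(a),\widetilde f^{\,\prime}(a))$ onto $\bbC^2$ (a standard ingredient in the limit circle case at $a$) is used. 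For part (ii), showing $T_\alpha\subseteq T_\alpha^*$ reduces to noting that for $f,g\in\dom(T_\alpha)$ both pairs $(\widetilde f(a),\widetilde f^{\,\prime}(a))$ and $(\widetilde g(a),\widetilde g^{\,\prime}(a))$ lie in the one-dimensional subspace $\bbC\cdot(\sin\alpha,-\cos\alpha)$, so the boundary form vanishes. The converse inclusion $T_\alpha^*\subseteq T_\alpha$ comes from plugging a concrete $f\in\dom(T_\alpha)$ realizing $(\widetilde f(a),\widetilde f^{\,\prime}(a))=(\sin\alpha,-\cos\alpha)$ into the identity and reading off $\sin\alpha\,\widetilde g^{\,\prime}(a)+\cos\alpha\,\widetilde g(a)=0$. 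That every self-adjoint extension has this form follows from the deficiency indices being $(1,1)$: the quotient $\dom(T_{max})/\dom(T_{min})$ is two-dimensional (parametrized by $\bbC^2$ via the generalized boundary values), and the self-adjoint extensions correspond precisely to the one-dimensional Lagrangian subspaces of $\bbC^2$ with respect to the symplectic form $\omega((x_1,x_2),(y_1,y_2))=\overline{x_1}y_2-\overline{x_2}y_1$, which are exactly the lines $\bbC\cdot(\sin\alpha,-\cos\alpha)$ for $\alpha\in[0,\pi)$.

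The main obstacle is part (iii), the Friedrichs identification. The symmetric and self-adjointness arguments above do not distinguish any particular $\alpha$; to single out $\alpha=0$ I would appeal to the boundary pair formalism identified in Appendix \ref{{sA}}, which characterizes the Friedrichs extension by the vanishing of the first component of the abstract boundary map, translated here into the condition $\widetilde g(a)=0$. Alternatively, one can verify directly that $\dom(T_{\alpha=0})$ is contained in the form closure of $\dot T$ and invoke Kato's abstract characterization of the Friedrichs extension as the unique self-adjoint extension whose operator domain lies inside this form domain; the inclusion reduces to showing that functions satisfying $\widetilde g(a)=0$ can be approximated in the form norm by compactly supported functions, which is a variant of the analogous statement in the quasi-regular case Proposition \ref{23.t7.t3.12}\,(v). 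The limit circle at $b$ analogue is obtained by swapping the roles of the endpoints throughout.
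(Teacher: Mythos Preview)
The paper does not supply its own proof of Proposition~\ref{p4.1}: it is stated in Section~\ref{s2} as background, covered by the blanket reference to \cite[Ch.~5]{GNZ24} (and implicitly \cite{GLN20}) at the start of that section. So for parts~(i) and~(ii) there is nothing to compare against, and your argument via Green's formula, the vanishing of the Wronskian at the limit point endpoint, and the Lagrangian-subspace description is correct and is essentially the standard route taken in those references.

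One small comment on your derivation of $W(\overline{f},g)(a)=\overline{\widetilde f(a)}\,\widetilde g^{\,\prime}(a)-\overline{\widetilde f^{\,\prime}(a)}\,\widetilde g(a)$: the asymptotic expansion you write down for $g$ is valid, but it does not by itself control $g^{[1]}$, which also enters the Wronskian. The clean way to obtain the identity is to use the Wronskian definitions $\widetilde g(a)=-W(u_a,g)(a)$, $\widetilde g^{\,\prime}(a)=W(\hatt u_a,g)(a)$ from \eqref{7.3.34A}--\eqref{7.3.34B} together with the algebraic Pl\"ucker relation and the reality of $u_a,\hatt u_a$; no asymptotics are needed.

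For part~(iii) the paper does, in effect, give an argument: Lemma~\ref{bdpairlemma} in Appendix~\ref{{sA}} shows abstractly that once the map $\Lambda\colon g\mapsto \widetilde g(a)$ is bounded in the form topology and extends $\Gamma_0$, the extension $\ker(\Gamma_0)=\dom(T_{\alpha=0})$ is automatically the Friedrichs extension. This is precisely your first proposed route, so here you and the paper agree. Your alternative route via direct approximation in the form norm is also viable and is closer in spirit to Kalf's original characterization \cite{Ka78}, but it requires more work than invoking Lemma~\ref{bdpairlemma}.
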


In the case when $\tau$ is in the limit point case at both $a$ and $b$, the deficiency indices of $T_{min}$ are $(0,0)$.  In this case, $T:=T_{min}=T_{max}$ is self-adjoint.

\begin{proposition}\lb{p2.14}
Assume Hypothesis \ref{h2.1}.  If $\tau$ is in the limit point case at both $a$ and $b$, then $T:=T_{min}=T_{max}$ is self-adjoint.
\end{proposition}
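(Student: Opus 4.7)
The plan is to reduce to the classical fact that limit point endpoints force boundary Wronskians on $\dom(T_{\max})$ to vanish, and then to conclude via the Lagrange identity. By \eqref{2.12} the claim $T_{\min}=T_{\max}$ is equivalent to symmetry of $T_{\max}$, i.e.\ to $T_{\max}\subseteq T_{\max}^{*}=T_{\min}$. Integration by parts on $[c,d]\subset(a,b)$ and passage to the limits $c\downarrow a$, $d\uparrow b$ (legitimate by the existence of the boundary Wronskian limits in \eqref{2.9}) yields the Lagrange identity
\begin{equation*}
(f,T_{\max}g)_{\Lr}-(T_{\max}f,g)_{\Lr}=W(\overline{f},g)(b)-W(\overline{f},g)(a),\quad f,g\in\dom(T_{\max}),
\end{equation*}
so it is enough to show that both boundary Wronskian limits vanish.

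The central step is the claim that the limit point case at $b$ forces $W(\overline{f},g)(b)=0$ for every $f,g\in\dom(T_{\max})$, with the analogous statement at $a$. I would establish this by fixing a non-real $z_0$ and invoking Proposition~\ref{t2.3}\,(ii) to obtain the unique-up-to-scalar solution $\psi=\psi_{b}(z_0,\dott)\in\Lr$ near $b$; selecting a companion solution $\phi$ of $\tau u=z_0 u$ with $W(\phi,\psi)=1$, the limit point hypothesis forces $\phi\notin\Lr$ near $b$. For any $g\in\dom(T_{\max})$ the function $h=(\tau-z_0)g$ lies in $\Lr$, and a variation-of-parameters representation near $b$ expresses $g$ as a linear combination of $\psi$ and $\phi$ modified by integrals of $\psi h\,r$ and $\phi h\,r$. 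The $L^{2}$-membership of $g$ near $b$, together with $\phi\notin L^{2}$ near $b$, pins down the $\phi$-coefficient and, via Cauchy--Schwarz applied to the $L^{2}$-pair $\psi$ and $h$, controls the asymptotic behavior of $g$ relative to $\phi$ sharply enough to force $W(\overline{f},g)(b)=0$ for any second element $f\in\dom(T_{\max})$. The symmetric argument at $a$ then completes the proof.

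The main obstacle is precisely this asymptotic ODE analysis at the limit point endpoint: one must track how the $\phi$-integral (which lacks an $L^{2}$ bound on $\phi$) cancels against the $\phi$-coefficient to remain compatible with $g\in L^{2}$, and then translate this cancellation into the vanishing of the Wronskian. An equivalent but algebraically cleaner route localizes to a subinterval $(c,b)$ with $c$ regular, uses that the associated minimal operator $T_{\min,(c,b)}$ has deficiency indices $(1,1)$, and argues that the boundary form at $b$ on the two-dimensional quotient $\dom(T_{\max,(c,b)})/\dom(T_{\min,(c,b)})$ is identically zero --- a Glazman-type boundary-form decomposition that relies on the same limit point input. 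Either route concludes the proof when combined with the vanishing at $a$.
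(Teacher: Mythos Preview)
The paper does not supply a proof of Proposition~\ref{p2.14}; it is one of the background facts collected in Section~\ref{s2}, for which the authors refer to \cite[Chapter~5]{GNZ24}. Your outline is the classical route and is correct: reduce $T_{\min}=T_{\max}$ to symmetry of $T_{\max}$ via \eqref{2.12}, apply the Lagrange identity to obtain the boundary Wronskian terms, and show that the limit point hypothesis at each endpoint forces $W(\overline f,g)$ to vanish there for all $f,g\in\dom(T_{\max})$. Of your two proposed mechanisms for this last step, the second---restrict to $(c,b)$ with $c$ regular, observe that the deficiency indices of $T_{\min,(c,b)}$ are $(1,1)$, and conclude that the two boundary data at the regular endpoint already exhaust the two-dimensional quotient $\dom(T_{\max,(c,b)})/\dom(T_{\min,(c,b)})$, so the boundary form at $b$ contributes nothing---is the standard textbook argument (see, e.g., \cite{We87} or \cite{Na68}). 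Your variation-of-parameters sketch is also a legitimate route, but as you yourself note, the asymptotic bookkeeping needed to pass from the integral representation of $g$ to the vanishing of $W(\overline f,g)(b)$ is not fully executed; the Glazman-type alternative you offer is both cleaner and complete.
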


\begin{remark} \lb{r2.15} 
The generalized boundary values associated with the Sturm--Liouville expression
(2.2) as introduced in Proposition \ref{23.t7.3.11} by
 \begin{align}
\wti g(c) &= \lim_{x \to c} \f{g(x)}{\hatt u_c(\lambda_0,x)},   \lb{7.3.43} \\
{\wti g}^{\, \prime}(c) & = \lim_{x \to c} \f{g(x) - \wti g(c) \hatt u_c(\lambda_0,x)}{u_c(\lambda_0,x)},     \lb{7.3.44}
\end{align}
especially, $\wti g(c)$ in \eqref{7.3.43}, at an endpoint $c \in \{a,b\}$, have a longer history.
 
$(i)$ Rellich \cite{Re43} in connection with coefficients $p, q, r$ that had a very particular behavior in a neighborhood of the endpoint $c$ of the type
\index{generalized boundary values}
\begin{align}
\begin{split}
p(x) &= (x - c)^{\sigma} \big[p_0 + p_1 (x - c) + p_2 (x - c)^2 + \cdots \big],  \\
q(x) &= (x - c)^{\sigma - 2} \big[q_0 + q_1 (x - c) + q_2 (x - c)^2 + \cdots \big],  \\
r(x) &= (x - c)^{\sigma - 2} \big[r_0 + r_1 (x - c) + r_2 (x - c)^2 + \cdots \big],  \\
\end{split}
\end{align}
with $\sigma, p_0, p_1, \dots , q_0, q_1, \dots , r_0, r_1, \dots  \in \bbR$, $p_0 \neq 0$, $r_k \neq 0$ for some
$k \in \bbN_0$, $k_{\ell} = 0$ for $0 \leq \ell \leq k-1$, etc. This was also recorded in \cite[Ch.~15]{He67} and \cite[Ch.~III]{JR76}.
In 1951, Rellich \cite{Re51} considerably generalized the hypotheses on $p,q,r$. The case of the Bessel equation was reconsidered in
\cite{GP84}, and the case of Schr\"odinger operators on $(0,\infty)$ with potentials $q$ satisfying
\begin{equation}
q(x) = \big(\gamma^2 - (1/4)\big) x^{-2} + \eta x^{-1} + \omega x^{-a} + W(x) \,
\text{ for a.e.~$x > 0$},
\end{equation}
with $\gamma \geq 0$, $\eta, \omega \in \bbR$, $a \in (0,2)$, and $W \in L^{\infty}((0,\infty);dx)$ real-valued a.e.,  was systematically treated in \cite{BG85} and \cite{Ko91}. Under the general Hypothesis \ref{h2.1}, the boundary value $\wti g(c)$ in \eqref{7.3.43} was studied in detail by Kalf \cite[Remark~3]{Ka78} and subsequently by Rosenberger in \cite[Theorem~3]{Ro85}. It was once again systematically employed by Niessen and Zettl \cite{NZ92}. In this context we also refer to \cite[Propositions~6.11.1, 6.12.1]{BHS20},
which discusses linearly independent boundary values in terms of boundary triplets and Wronskians
$W(\hatt u_b(\lambda_0,\dott), g)(c)$. \\[1mm]
$(ii)$ The difference quotient analogue of  ${\wti g}^{\, \prime}(c)$ in \eqref{7.3.44}, on the other hand, apparently, was not considered in \cite{BHS20}, \cite{Ka78}, \cite{NZ92}, and \cite{Ro85}. It is a new twist in \cite{GLN20} that offers an explicit description of boundary conditions for lower semibounded, self-adjoint, singular (quasi-regular) Sturm--Liouville operators. \\[1mm] 
$(iii)$ 
We recall that for an element $g \in \dom (T_{max})$ 
the conditions $\widetilde g(a)=\widetilde g(b)=0$ describe the Friedrichs extension 
in Proposition \ref{23.t7.t3.12}, and the condition $\widetilde g(a)=0$
describes the Friedrichs extension  in Proposition \ref{p4.1}.
It is worthwhile to observe that for $c \in \{a,b\}$ a condition of the form 
$\widetilde g(c)=0$ is sometimes met in a different guise, such as
\begin{equation}
 \lim_{x \to c} \frac{g(x)}{u_c(\lambda_0,x)} \mbox{ exists in } \bbC,
\end{equation} 
where $u_c(\lambda_0, \cdot)$ is a principal solution. For the special case of the Legendre operator see, for instance,  \cite[Sect.~132]{AG81}. 
For the above and other alternative statements, see also
\cite[Corollary 6.11.9, Corollary 6.12.9]{BHS20} and \cite[Sect.~13.4]{GNZ24}. 
~\hfill $\diamond$
\end{remark}

\section{Case One:  Two Limit Circle Endpoints} \lb{s3}

In this section we investigate the situation when $\tau$ is in the limit circle case at both $a$ and $b$.
The main goal is to provide the sesquilinear forms corresponding to the lower semibounded self-adjoint extensions of $T_{min}$ with separated and coupled boundary conditions from Proposition \ref{23.t7.t3.12}.
The following hypothesis is assumed throughout this section.

\begin{hypothesis}\lb{h3.1}
In addition to Hypothesis \ref{h2.1}, assume that $\tau$ is in the limit circle case at $a$ and $b$.  Suppose that $T_{min} \geq \lambda_0 I_{\Lr}$ for some $\lambda_0 \in \bbR$ and that $u_t(\lambda_0,\dott)$ and $\widehat u_t(\lambda_0,\dott)$ are principal and nonprincipal solutions of $\tau u = \lambda_0 u$ on $(a,b)$, respectively, at $t\in \{a,b\}$ that satisfy \eqref{7.3.33AB}.
\end{hypothesis}

Assuming Hypothesis \ref{h3.1}, choose $a_0,b_0\in (a,b)$ such that $a<a_0<b_0<b$ and
\begin{equation}\lb{3.1}
\begin{split}
&u_a(\lambda_0,x)\neq 0,\quad \widehat{u}_a(\lambda_0,x)\neq 0,\quad x\in (a,a_0);\\
&u_b(\lambda_0,x)\neq 0,\quad \widehat{u}_b(\lambda_0,x)\neq 0,\quad x\in (b_0,b).
\end{split}
\end{equation}
Let $c\in (a,a_0)$ and $d\in (b_0,b)$ be fixed.  Introducing the differential expressions $N_{\widehat{u}_a(\lambda_0,\dott),c}$ and $N_{\widehat{u}_b(\lambda_0,\dott),d}$ by
\begin{align}
N_{\widehat{u}_a(\lambda_0,\dott),c}f &= p^{1/2}\widehat{u}_a(\lambda_0,\dott)\bigg(\frac{f}{\widehat{u}_a(\lambda_0,\dott)}\bigg)',\quad f\in AC_{loc}((a,c));\lb{3.2vv}\\
N_{\widehat{u}_b(\lambda_0,\dott),d}g &= p^{1/2}\widehat{u}_b(\lambda_0,\dott)\bigg(\frac{g}{\widehat{u}_b(\lambda_0,\dott)}\bigg)',\quad g\in AC_{loc}((d,b)),
\end{align}
one defines the symmetric sesquilinear form $\mathfrak{Q}_{c,d}$ as follows, see, for instance, \cite[Sect.~6.8]{BHS20}, \cite[Sect.~4.5]{GNZ24},
\begin{align}
&\dom(\mathfrak{Q}_{c,d})=\big\{h\in \Lr\,\big|\, h\in AC_{loc}((a,b)),\no\\
&\hspace*{2.3cm} p^{-1/2}h^{[1]}\in L^2((c,d);dx),\, N_{\widehat{u}_a(\lambda_0,\dott),c}h\in L^2((a,c);dx),\lb{3.4}\\
&\hspace*{6.4cm} N_{\widehat{u}_b(\lambda_0,\dott),d}h\in L^2((d,b);dx)\big\},\no
\end{align}
and
\begin{align}
\mathfrak{Q}_{c,d}(f,g) &= \int_a^c dx\, \ol{(N_{\widehat{u}_a(\lambda_0,\dott),c}f)(x)} (N_{\widehat{u}_a(\lambda_0,\dott),c}g)(x)  \no \\
& \quad + \int_d^b dx\, \ol{(N_{\widehat{u}_b(\lambda_0,\dott),d}f)(x)} (N_{\widehat{u}_b(\lambda_0,\dott),d}g)(x)    \no\\
&\quad + \lambda_0 \int_a^c r(x) \,dx\, \ol{f(x)}g(x) + \lambda_0\int_d^b r(x)\,dx\, \ol{f(x)}g(x)    \no \\
& \quad + \int_c^d\, dx\, \Big[p(x)^{-1}\ol{f^{[1]}(x)}g^{[1]}(x) + q(x) \ol{f(x)}g(x)\Big]\no\\
&\quad+ \frac{\widehat{u}_a^{[1]}(\lambda_0,c)}{\widehat{u}_a(\lambda_0,c)}\ol{f(c)}g(c) - \frac{\widehat{u}_b^{[1]}(\lambda_0,d)}{\widehat{u}_b(\lambda_0,d)}\ol{f(d)}g(d),\quad f,g\in \dom(\mathfrak{Q}_{c,d}).\lb{3.5}
\end{align}

Several important properties of the sesquilinear form $\mathfrak{Q}_{c,d}$ are collected in the following result.

\begin{proposition}\lb{p3.1}
Assume Hypothesis \ref{h3.1}.  Let $a<a_0<b_0<b$ with $a_0$ and $b_0$ chosen so that \eqref{3.1} holds and suppose $c\in (a,a_0)$ and $d\in (b_0,b)$.  Then the following statements $(i)$--\,$(iv)$ hold:\\[1mm]
$(i)$ The sesquilinear form $\mathfrak{Q}_{c,d}$ defined by \eqref{3.4} and \eqref{3.5} is densely defined, closed, and lower semibounded in $\Lr$.\\[1mm]
$(ii)$ $\dom(T_{max})\subseteq \dom(\mathfrak{Q}_{c,d})$.\\[1mm]
$(iii)$  If $c'\in (a,a_0)$ and $d'\in (b_0,b)$, then $\mathfrak{Q}_{c,d}=\mathfrak{Q}_{c',d'}$.  That is, the sesquilinear form defined by \eqref{3.4} and \eqref{3.5} is independent of the choices of $c\in (a,a_0)$ and $d\in (b_0,b)$.\\[1mm]
$(iv)$  If $g\in \dom(\mathfrak{Q}_{c,d})$, then the following limits exist:
\begin{equation}\lb{3.6}
\wti g(a) := \lim_{x \downarrow a} \f{g(x)}{\hatt u_a(\lambda_0,x)},\quad \wti g(b) := \lim_{x \uparrow b} \f{g(x)}{\hatt u_b(\lambda_0,x)}.
\end{equation}
In particular, the generalized boundary values $\wti g(a)$ and $\wti g(b)$ introduced in \eqref{7.3.34A} for functions in $\dom(T_{max})$ extend to functions in $\dom(\mathfrak{Q}_{c,d})$.
\end{proposition}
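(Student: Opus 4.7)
The plan is to address the four items in the order (iv), (ii), (iii), (i), since each step naturally feeds into the next; the main obstacle will be the closedness assertion in (i). Throughout, the two tools doing all the work are the finiteness of $\int_a^c p(y)^{-1}\hatt u_a(\lambda_0,y)^{-2}\,dy$ (and its analogue at $b$) coming from the nonprincipal property \eqref{2.17}, together with the solution identity $(\hatt u_a^{[1]})'=(q-\lambda_0 r)\hatt u_a$.

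For (iv), I would rewrite \eqref{3.2vv} as the pointwise formula
$\bigl(g/\hatt u_a(\lambda_0,\dott)\bigr)' = \bigl(N_{\hatt u_a(\lambda_0,\dott),c}g\bigr)\,p^{-1/2}\,\hatt u_a(\lambda_0,\dott)^{-1}$,
integrate from $x_1$ to $x_2$ with $a<x_1<x_2<c$, and apply Cauchy--Schwarz together with \eqref{2.17}; this shows $g(x)/\hatt u_a(\lambda_0,x)$ is Cauchy as $x\downarrow a$, yielding the finite limit $\wti g(a)$, with the argument at $b$ being analogous. For (ii), I would use the identity $N_{\hatt u_a(\lambda_0,\dott),c}g = p^{-1/2}\,W(\hatt u_a(\lambda_0,\dott),g)/\hatt u_a(\lambda_0,\dott)$, which follows directly from $W(\hatt u_a(\lambda_0,\dott),g)=p\,\hatt u_a(\lambda_0,\dott)^{2}\,(g/\hatt u_a(\lambda_0,\dott))'$. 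For $g\in\dom(T_{max})$ the Wronskian has a finite limit at $a$ by \eqref{2.9}, hence is bounded on $(a,c)$; combining this with \eqref{2.17} gives $N_{\hatt u_a(\lambda_0,\dott),c}g\in L^2((a,c);dx)$, and similarly at $b$. On the compact interval $[c,d]$, $g^{[1]}\in AC_{loc}((a,b))$ is bounded while $p^{-1}\in L^1((c,d);dx)$, so $p^{-1/2}g^{[1]}\in L^2((c,d);dx)$; this yields $\dom(T_{max})\subseteq\dom(\mathfrak{Q}_{c,d})$.

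For (iii), the crucial computation is the pointwise identity
\[
\frac{d}{dx}\Bigl[\frac{\hatt u_a^{[1]}(\lambda_0,\dott)}{\hatt u_a(\lambda_0,\dott)}|f|^2\Bigr]
= p^{-1}|f^{[1]}|^2 + q|f|^2 - \bigl|N_{\hatt u_a(\lambda_0,\dott),c}f\bigr|^2 - \lambda_0 r|f|^2,
\]
obtained by substituting $h=f/\hatt u_a(\lambda_0,\dott)$ and invoking $(\hatt u_a^{[1]})'=(q-\lambda_0 r)\hatt u_a$. Integrating from $c$ to $c'$ with $c<c'$, one finds that the modifications in the boundary term at $c$ and the interior integrand on $(c,c')$ telescope against the modification in the integral defining $\int_a^{c'}|N_{\hatt u_a,c'}f|^2\,dx$; the same argument at the other endpoint shows $\mathfrak{Q}_{c,d}=\mathfrak{Q}_{c',d'}$ by polarization.

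Finally, for (i), density is immediate from (ii) and the density of $\dom(\dot T)\subset \dom(T_{max})$ in $\Lr$. Lower semiboundedness by $\lambda_0$ is clear on the core $\dom(\dot T)$, since a compactly supported $f\in\dom(\dot T)$ with $\supp(f)\subset(c,d)$ gives $\mathfrak{Q}_{c,d}(f,f)=(f,T_{min}f)_{\Lr}\ge\lambda_0\|f\|_{\Lr}^2$. The hard part is closedness: the plan is to pass via $h=f/\hatt u_a(\lambda_0,\dott)$ on $(a,c)$ and $k=f/\hatt u_b(\lambda_0,\dott)$ on $(d,b)$, which converts the endpoint contributions of $\mathfrak{Q}_{c,d}$ into regular weighted Dirichlet-type forms $\int_a^c[p\,\hatt u_a^{2}|h'|^2+\lambda_0 r\,\hatt u_a^{2}|h|^2]\,dx$, closed on a natural Sobolev-type space on each subinterval, which are then glued to the regular piece on $[c,d]$. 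The finite boundary terms $(\hatt u_a^{[1]}/\hatt u_a)(c)|f(c)|^2$ and $-(\hatt u_b^{[1]}/\hatt u_b)(d)|f(d)|^2$ act as relatively form-bounded perturbations with relative bound zero via the standard trace estimate on $[c,d]$, so they preserve both closedness and semiboundedness; this is exactly the form-theoretic construction carried out systematically in \cite[Sect.~6.8]{BHS20} and \cite[Sect.~4.5]{GNZ24}, which may be invoked for this final step.
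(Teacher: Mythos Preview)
The paper does not actually prove Proposition~\ref{p3.1}; it simply records in the subsequent remark that all four items are established in \cite[Theorem~6.10.9, Lemma~6.9.4, Corollary~6.11.2, Lemma~6.11.3]{BHS20}. Your proposal therefore goes well beyond what the paper does: your arguments for (iv), (ii), and (iii) are correct, self-contained, and exactly the sort of direct computations one would carry out if one unwound the cited references. The Cauchy--Schwarz argument for (iv), the Wronskian bound for (ii), and the telescoping identity for (iii) are all sound (the identity in (iii) only uses $f\in AC_{loc}$ and the equation for $\hatt u_a$, not $f^{[1]}\in AC_{loc}$, so it applies on the full form domain).

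There is one slip in your treatment of (i). You call $\dom(\dot T)$ a ``core'' and deduce lower semiboundedness from $\mathfrak{Q}_{c,d}(f,f)=(f,T_{min}f)_{\Lr}\ge\lambda_0\|f\|^2$ on compactly supported $f$. But $\dom(\dot T)$ is \emph{not} a form core for $\mathfrak{Q}_{c,d}$: the closure of $\mathfrak{Q}_{c,d}\!\upharpoonright\!\dom(\dot T)$ is the Friedrichs form $\mathfrak{Q}_{c,d}^{0,0}$, whose domain is the strictly smaller set $\{h\in\dom(\mathfrak{Q}_{c,d})\,|\,\wti h(a)=\wti h(b)=0\}$ (cf.\ \eqref{3.22aa}). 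So the inequality on $\dom(\dot T)$ alone does not yield the bound on all of $\dom(\mathfrak{Q}_{c,d})$. This is not fatal, since your subsequent strategy---rewriting the endpoint contributions as nonnegative weighted Dirichlet forms in $h=f/\hatt u_a$ and $k=f/\hatt u_b$, then treating the finite boundary terms at $c,d$ as infinitesimally form-bounded perturbations of the regular piece on $[c,d]$---does give both closedness and lower semiboundedness simultaneously, and you correctly point to \cite[Sect.~6.8]{BHS20} for the details. Just drop the ``core'' sentence, or replace it by the observation that the three-piece decomposition itself already exhibits $\mathfrak{Q}_{c,d}(f,f)-\lambda_0\|f\|^2$ as a sum of a nonnegative part and an infinitesimally bounded perturbation.
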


\begin{remark}
The properties of $\mathfrak{Q}_{c,d}$ in Proposition \ref{p3.1} are discussed in detail in \cite{{BHS20}}; see \cite[Theorem 6.10.9, Lemma 6.9.4, Corollary 6.11.2, Lemma 6.11.3]{BHS20}. \hfill$\diamond$
\end{remark}

\begin{lemma}\lb{l3.3c}
Assume Hypothesis \ref{h3.1}.  Let $a<a_0<b_0<b$ with $a_0$ and $b_0$ chosen so that \eqref{3.1} holds and suppose $c\in (a,a_0)$ and $d\in (b_0,b)$.  If $f\in \dom(\mathfrak{Q}_{c,d})$ and $g\in \dom(T_{max})$, then
\begin{align}
(f,T_{max}g)_{\Lr} = \mathfrak{Q}_{c,d}(f,g) + \overline{\wti f(a)}\wti g^{\,\prime}(a) - \overline{\wti f(b)}\wti g^{\,\prime}(b).\lb{3.7c}
\end{align}
\end{lemma}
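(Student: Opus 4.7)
\medskip

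The plan is a direct integration-by-parts computation of $(f,T_{max}g)_{L^2((a,b);r\,dx)}=\int_a^b r\,\overline{f}\,(\tau g)\,dx$ after splitting the integral into the three pieces $(a,c)$, $(c,d)$, and $(d,b)$. On the middle interval standard integration by parts is available because $f\in AC_{loc}((a,b))$ (from \eqref{3.4}) and $g^{[1]}\in AC_{loc}((a,b))$ (from $g\in\dom(T_{max})$); the finite-interval bounds on $p^{-1}$ in Hypothesis \ref{h2.1} make every $L^2((c,d);dx)$-integrability statement routine. The outcome is
\begin{equation*}
\int_c^d r\,\overline{f}\,(\tau g)\,dx
=\int_c^d\Big[p^{-1}\overline{f^{[1]}}g^{[1]}+q\,\overline{f}\,g\Big]dx
+\overline{f(c)}\,g^{[1]}(c)-\overline{f(d)}\,g^{[1]}(d).
\end{equation*}

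On $(a,c)$, the mechanism is the Liouville-type identity, valid because $\widehat u_a(\lambda_0,\cdot)$ is a real, nonvanishing solution of $\tau u=\lambda_0 u$ on $(a,a_0)$ by \eqref{3.1}:
\begin{equation*}
r\,[\tau g-\lambda_0 g]
=-\,\frac{1}{\widehat u_a(\lambda_0,\cdot)}\Big(p\,\widehat u_a(\lambda_0,\cdot)^2\,(g/\widehat u_a(\lambda_0,\cdot))'\Big)',
\end{equation*}
coupled with the algebraic observation $p\,\widehat u_a(\lambda_0,\cdot)^2(g/\widehat u_a(\lambda_0,\cdot))'=W(\widehat u_a(\lambda_0,\cdot),g)$. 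Multiplying by $\overline{f}$ and integrating by parts on a compact subinterval $[a_1,c]\subset(a,c)$ gives
\begin{equation*}
\int_{a_1}^c r\,\overline{f}\,(\tau g-\lambda_0 g)\,dx
=\int_{a_1}^c\overline{(N_{\widehat u_a(\lambda_0,\cdot),c}f)}\,(N_{\widehat u_a(\lambda_0,\cdot),c}g)\,dx
-\left[\frac{\overline{f(x)}}{\widehat u_a(\lambda_0,x)}\,W(\widehat u_a(\lambda_0,\cdot),g)(x)\right]_{a_1}^{c}.
\end{equation*}
Letting $a_1\downarrow a$, Cauchy--Schwarz and \eqref{3.4} show the right-hand integral converges, the boundary term at $a_1$ tends to $\overline{\widetilde f(a)}\,\widetilde g^{\,\prime}(a)$ by Proposition \ref{p3.1}(iv) combined with Proposition \ref{23.t7.3.11}, and the boundary term at $c$ contributes $-\overline{f(c)}\,\widehat u_a(\lambda_0,c)^{-1}W(\widehat u_a(\lambda_0,\cdot),g)(c)$. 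The completely analogous computation on $[d,b_1]$, with $b_1\uparrow b$, produces the corresponding terms involving $\widehat u_b(\lambda_0,\cdot)$, $\widetilde f(b)$, and $\widetilde g^{\,\prime}(b)$, with the opposite overall sign due to the orientation of the boundary.

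Summing the three contributions and comparing with \eqref{3.5}, the integrals over $(a,c)$, $(c,d)$, and $(d,b)$, as well as the $\lambda_0$-terms, reproduce exactly $\mathfrak Q_{c,d}(f,g)$. What remains on top are the generalized-boundary-value terms $\overline{\widetilde f(a)}\,\widetilde g^{\,\prime}(a)$ and $-\overline{\widetilde f(b)}\,\widetilde g^{\,\prime}(b)$, provided the boundary contributions at the interior points $c$ and $d$ reduce to the explicit boundary terms $\widehat u_a^{[1]}(\lambda_0,c)/\widehat u_a(\lambda_0,c)\,\overline{f(c)}g(c)$ and $-\widehat u_b^{[1]}(\lambda_0,d)/\widehat u_b(\lambda_0,d)\,\overline{f(d)}g(d)$ built into $\mathfrak Q_{c,d}$. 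This is the one nontrivial cancellation: using the Wronskian identity $W(\widehat u_a(\lambda_0,\cdot),g)(c)=\widehat u_a(\lambda_0,c)g^{[1]}(c)-\widehat u_a^{[1]}(\lambda_0,c)g(c)$, one verifies by inspection
\begin{equation*}
-\,\frac{\overline{f(c)}}{\widehat u_a(\lambda_0,c)}\,W(\widehat u_a(\lambda_0,\cdot),g)(c)
+\overline{f(c)}\,g^{[1]}(c)
-\frac{\widehat u_a^{[1]}(\lambda_0,c)}{\widehat u_a(\lambda_0,c)}\,\overline{f(c)}\,g(c)=0,
\end{equation*}
and the analogous identity at $d$.

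I expect the main obstacle to be the passage to the limits $a_1\downarrow a$ and $b_1\uparrow b$: because $f$ is assumed only to lie in $\dom(\mathfrak Q_{c,d})$ (and not necessarily in $\dom(T_{max})$), one cannot invoke \eqref{2.9} directly for $f$, and the existence of $\widetilde f(a)$ and $\widetilde f(b)$ must be drawn from Proposition \ref{p3.1}(iv). The integrability encoded in \eqref{3.4} is then precisely what is needed to justify that the three separate integrals (on $(a,c)$, $(c,d)$, $(d,b)$) are absolutely convergent and that the limiting process commutes with the boundary evaluation.
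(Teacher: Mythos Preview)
Your proposal is correct and follows essentially the same approach as the paper's proof: split $(f,T_{max}g)$ over $(a,c)$, $(c,d)$, $(d,b)$, use ordinary integration by parts on the middle piece, and on the outer pieces apply the Jacobi/Liouville factorization $r(\tau g-\lambda_0 g)=-\widehat u_a^{-1}\big(p\,\widehat u_a^2(g/\widehat u_a)'\big)'$ together with $p\,\widehat u_a^2(g/\widehat u_a)'=W(\widehat u_a,g)$, then pass to the limit using Proposition~\ref{p3.1}(iv) for $\widetilde f(a),\widetilde f(b)$ and \eqref{7.3.34B} for $\widetilde g^{\,\prime}(a),\widetilde g^{\,\prime}(b)$. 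The cancellation you display at $c$ (and its analogue at $d$) is exactly the computation \eqref{3.12c} in the paper, so the two arguments coincide step for step.
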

\begin{proof}
We recall Jacobi's factorization identity in the following form: If $g,h\in AC_{loc}((a,b))$ and $g^{[1]},h^{[1]}\in AC_{loc}((a,b))$, then
\begin{equation}
    -\big(g^{[1]}\big)' + \frac{\big(h^{[1]}\big)'}{h}g = -\frac{1}{h}\Bigg[ph^2 \bigg(\frac{g}{h}\bigg)'\Bigg]'\, \text{ when $h\neq 0$}.
\end{equation}
Since $\widehat{u}_t(\lambda_0,\dott)$, $t\in\{a,b\}$, are solutions of $\tau u = \lambda_0 u$ on $(a,b)$, one infers that:
\begin{align}
    q &= \lambda_0 r + \frac{\big(\widehat{u}_a^{[1]}(\lambda_0,\dott)\big)'}{\widehat{u}_a(\lambda_0,\dott)}\, \text{ a.e.~on $(a,a_0)$};\lb{3.9m}\\
    q &= \lambda_0 r + \frac{\big(\widehat{u}_b^{[1]}(\lambda_0,\dott)\big)'}{\widehat{u}_b(\lambda_0,\dott)}\, \text{ a.e.~on $(b_0,b)$}.
\end{align}
To prove \eqref{3.7c} one calculates for $f\in \dom(\mathfrak{Q}_{c,d})$ and $g\in \dom(T_{max})$ as follows:
\begin{align}
&(f,T_{max}g)_{\Lr}\no\\
&\quad = \int_a^b dx\, \ol{f}\big[-\big(g^{[1]}\big)'+qg\big]\no\\
&\quad = \lim_{a'\downarrow a}\int_{a'}^cdx\, \ol{f}\bigg[-\big(g^{[1]}\big)'+\lambda_0rg + \frac{\big(\widehat{u}_a^{[1]}(\lambda_0,\dott)\big)'}{\widehat{u}_a(\lambda_0,\dott)}g \bigg] - \int_c^ddx\, \ol{f}\big(g^{[1]}\big)'\no\\
&\qquad + \int_c^ddx\, q\ol{f}g + \lim_{b'\uparrow b}\int_d^{b'}dx\, \ol{f}\bigg[-\big(g^{[1]}\big)'+\lambda_0rg + \frac{\big(\widehat{u}_b^{[1]}(\lambda_0,\dott)\big)'}{\widehat{u}_b(\lambda_0,\dott)}g\bigg]\no\\
&\quad = \lim_{a'\downarrow a}\int_{a'}^cdx\, \ol{f}\Bigg\{-\frac{1}{\widehat{u}_a(\lambda_0,\dott)}\Bigg[p\widehat{u}_a(\lambda_0,\dott)^2\bigg(\frac{g}{\widehat{u}_a(\lambda_0,\dott)}\bigg)'\Bigg]' \Bigg\}\no\\
&\qquad + \lambda_0 \int_a^c r\,dx\, \overline{f}g - \overline{f}g^{[1]}\big|_c^d + \int_c^d dx\, \big(p^{-1}\overline{f^{[1]}}g^{[1]} + q\overline{f}g\big) + \lambda_0 \int_d^b r\,dx\, \overline{f}g\no\\
&\qquad + \lim_{b'\uparrow b}\int_d^{b'}dx\, \ol{f}\Bigg\{-\frac{1}{\widehat{u}_b(\lambda_0,\dott)}\Bigg[p\widehat{u}_b(\lambda_0,\dott)^2\bigg(\frac{g}{\widehat{u}_b(\lambda_0,\dott)}\bigg)'\Bigg]' \Bigg\}\no\\
&\quad = \lim_{a'\downarrow a}\Bigg\{-\frac{\ol{f}}{\widehat{u}_a(\lambda_0,\dott)}\Bigg[p\widehat{u}_a(\lambda_0,\dott)^2\bigg(\frac{g}{\widehat{u}_a(\lambda_0,\dott)}\bigg)'\Bigg]\Bigg|_{a'}^c\no\\
&\hspace*{1.8cm} + \int_{a'}^cdx\, \bigg(\frac{\overline{f}}{\widehat{u}_a(\lambda_0,\dott)}\bigg)' p \widehat{u}_a(\lambda_0,\dott)^2\bigg(\frac{g}{\widehat{u}_a(\lambda_0,\dott)}\bigg)' \Bigg\}\no\\
&\qquad + \lambda_0 \int_a^c r\,dx\, \overline{f}g - \overline{f}g^{[1]}\big|_c^d + \int_c^d dx\, \big(p^{-1}\overline{f^{[1]}}g^{[1]} + q\overline{f}g\big) + \lambda_0 \int_d^b r\,dx\, \overline{f}g\no\\
&\qquad + \lim_{b'\uparrow b}\Bigg\{-\frac{\ol{f}}{\widehat{u}_b(\lambda_0,\dott)}\Bigg[p\widehat{u}_b(\lambda_0,\dott)^2\bigg(\frac{g}{\widehat{u}_b(\lambda_0,\dott)}\bigg)'\Bigg]\Bigg|_d^{b'}\no\\
&\hspace{2cm} + \int_d^{b'}dx\, \bigg(\frac{\overline{f}}{\widehat{u}_b(\lambda_0,\dott)}\bigg)' p \widehat{u}_b(\lambda_0,\dott)^2\bigg(\frac{g}{\widehat{u}_b(\lambda_0,\dott)}\bigg)' \Bigg\}.\lb{3.13c}
\end{align}
The evaluation terms at $c$ and $d$ in \eqref{3.13c} are
\begin{align}
&\Bigg\{-\frac{\ol{f}}{\widehat{u}_a(\lambda_0,\dott)}\Bigg[p\widehat{u}_a(\lambda_0,\dott)^2\bigg(\frac{g}{\widehat{u}_a(\lambda_0,\dott)}\bigg)'\Bigg]\Bigg\}(c)\no\\
&\qquad - \Bigg\{-\frac{\ol{f}}{\widehat{u}_b(\lambda_0,\dott)}\Bigg[p\widehat{u}_b(\lambda_0,\dott)^2\bigg(\frac{g}{\widehat{u}_b(\lambda_0,\dott)}\bigg)'\Bigg]\Bigg\}(d) - \overline{f(d)}g^{[1]}(d) + \overline{f(c)}g^{[1]}(c)\no\\
&\quad=-\frac{\overline{f(c)}}{\widehat{u}_a(\lambda_0,c)}\big\{g^{[1]}(c)\widehat{u}_a(\lambda_0,c)-g(c)\widehat{u}_a^{[1]}(\lambda_0,c)\big\} - \ol{f(d)}g^{[1]}(d) + \ol{f(c)}g^{[1]}(c)\no\\
&\qquad + \frac{\ol{f(d)}}{\widehat{u}_b(\lambda_0,d)}\big\{g^{[1]}(d)\widehat{u}_b(\lambda_0,d) - g(d)\widehat{u}_b^{[1]}(\lambda_0,d)\big\}\no\\
&\quad = \frac{\widehat{u}_a^{[1]}(\lambda_0,c)}{\widehat{u}_a(\lambda_0,c)}\ol{f(c)}g(c) - \frac{\widehat{u}_b^{[1]}(\lambda_0,d)}{\widehat{u}_b(\lambda_0,d)}\ol{f(d)}g(d).\lb{3.12c}
\end{align}

Applying \eqref{7.3.34B} and \eqref{3.6}, one obtains for $f\in \dom(\mathfrak{Q}_{c,d})$ and $g\in \dom(T_{max})$,
\begin{align}
&\lim_{a'\downarrow a} \Bigg[\frac{\overline{f}}{\widehat{u}_a(\lambda_0,\dott)}p\widehat{u}_a(\lambda_0,\dott)^2\bigg(\frac{g}{\widehat{u}_a(\lambda_0,\dott)}\bigg)'\Bigg](a')\no\\
&\quad = \lim_{a'\downarrow a} \frac{\overline{f(a')}}{\widehat{u}_a(\lambda_0,a')}W(\widehat{u}_a(\lambda_0,\dott),g)(a') = \overline{\wti f(a)}\wti g^{\,\prime}(a),\lb{3.14c}
\end{align}
and, similarly,
\begin{equation}
\lim_{b'\uparrow b} \Bigg[-\frac{\overline{f}}{\widehat{u}_b(\lambda_0,\dott)}p\widehat{u}_b(\lambda_0,\dott)^2\bigg(\frac{g}{\widehat{u}_b(\lambda_0,\dott)}\bigg)'\Bigg](b') = -\overline{\wti f(b)}\wti g^{\,\prime}(b).\lb{3.15c}
\end{equation}
In light of \eqref{3.12c}, \eqref{3.14c}, and \eqref{3.15c}, \eqref{3.13c} reduces to \eqref{3.7c}.
\end{proof}

\begin{remark}
The identity \eqref{3.7c} may be found written in the language of boundary triplets in \cite[Equation (6.11.5)]{BHS20}; see also Appendix \ref{{sA}}.\hfill$\diamond$
\end{remark}

The following infinitesimal form boundedness result is a consequence of \cite[Lemma 6.10.4]{BHS20}.

\begin{proposition}\lb{p3.3}
Assume Hypothesis \ref{h3.1}.  For every $\varepsilon>0$ there exists $C(\varepsilon)>0$ such that
\begin{equation}\lb{3.7a}
\big|\wti f(t)\big|^2 \leq \varepsilon\mathfrak{Q}_{c,d}(f,f) + C(\varepsilon)\|f\|_{\Lr}^2,\quad f\in \dom(\mathfrak{Q}_{c,d}),\, t\in\{a,b\}.
\end{equation}
\end{proposition}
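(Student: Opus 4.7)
I will prove the bound for $t = a$; the case $t = b$ is completely symmetric. The strategy exploits that by \eqref{2.17} the positive integral $I_a(a'') := \int_a^{a''} p^{-1}\hatt u_a(\lambda_0,\dott)^{-2}\,dx$ is finite and tends to $0$ as $a'' \downarrow a$; this decaying factor is what delivers the arbitrary $\varepsilon$.

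Setting $h := f/\hatt u_a(\lambda_0,\dott)$ on $(a, a_0)$, the identity $h(a) - h(y) = \int_a^y h'(s)\,ds$ combined with Cauchy--Schwarz yields, for every $y \in (a, a_0)$,
\[
|\wti f(a)|^2 \leq 2|h(y)|^2 + 2I_a(y)\int_a^{y} p(s)\hatt u_a(\lambda_0,s)^2|h'(s)|^2\,ds.
\]
Fix $a'' \in (a, a_0)$. The limit-circle assumption at $a$ gives $\hatt u_a(\lambda_0,\dott) \in L^2((a,a'');r\,dx)$, so $y \mapsto r(y)\hatt u_a(\lambda_0,y)^2$ is a positive $L^1$-weight on $(a,a'')$. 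Multiplying the displayed inequality by this weight, integrating over $(a,a'')$, using the monotonicity $I_a(y) \leq I_a(a'')$ together with the identity $r\hatt u_a(\lambda_0,\dott)^2|h|^2 = r|f|^2$, and dividing through by $\int_a^{a''} r(y)\hatt u_a(\lambda_0,y)^2\,dy > 0$, I obtain
\[
|\wti f(a)|^2 \leq \frac{2\|f\|_{\Lr}^2}{\int_a^{a''} r(y)\hatt u_a(\lambda_0,y)^2\,dy} + 2 I_a(a'')\int_a^{a''}\big|N_{\hatt u_a(\lambda_0,\dott),a''}f\big|^2\,dx.
\]

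By Proposition~\ref{p3.1}(iii), $\mathfrak{Q}_{c,d} = \mathfrak{Q}_{a'',d}$, so the non-negative summand $\int_a^{a''}|N_{\hatt u_a(\lambda_0,\dott),a''}f|^2\,dx$ appears as one component in the decomposition~\eqref{3.5} of $\mathfrak{Q}_{a'',d}(f,f)$. The complementary piece $R(f)$ comprises $\int_d^b|N_{\hatt u_b(\lambda_0,\dott),d}f|^2\,dx \geq 0$, the term $\lambda_0\int_a^b r|f|^2\,dx$, the regular Sturm--Liouville form $\int_{a''}^d\bigl(p^{-1}|f^{[1]}|^2 + q|f|^2\bigr)\,dx$ on the compact sub-interval $[a'',d]$, and the two pointwise boundary terms at $a''$ and $d$. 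Standard KLMN-type estimates for the regular form on $[a'',d]$, combined with the pointwise bound $|f(x)|^2 \leq C_1\|f\|_{L^2([\alpha,\beta])}^2 + C_2\int_\alpha^\beta p^{-1}|f^{[1]}|^2\,ds$ on compact sub-intervals (a consequence of $1/p \in L^1_{loc}$ and Cauchy--Schwarz), then give $R(f) \geq -M(a'')\|f\|_{\Lr}^2$ for some finite $M(a'')$, and hence
\[
\int_a^{a''}\big|N_{\hatt u_a(\lambda_0,\dott),a''}f\big|^2\,dx \leq \mathfrak{Q}_{c,d}(f,f) + M(a'')\|f\|_{\Lr}^2.
\]

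Given $\varepsilon > 0$, I then choose $a''$ close enough to $a$ so that $2I_a(a'') \leq \varepsilon$, and substitute the previous display into the earlier one to obtain \eqref{3.7a} with $C(\varepsilon) := 2/\int_a^{a''} r\hatt u_a(\lambda_0,\dott)^2\,dy + \varepsilon M(a'')$. The main obstacle is the lower bound $R(f) \geq -M(a'')\|f\|_{\Lr}^2$, which is a quantitative version of the lower semiboundedness from Proposition~\ref{p3.1}(i) and which requires some care when $1/r$ is unbounded on compact sub-intervals; the complete argument is given in \cite[Lemma~6.10.4]{BHS20}, which in fact supplies the full statement of Proposition~\ref{p3.3}.
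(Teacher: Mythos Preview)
Your proposal is correct and in fact substantially more detailed than the paper's own treatment, which gives no proof at all and simply records that the result is a consequence of \cite[Lemma~6.10.4]{BHS20}. Your sketch---the Cauchy--Schwarz estimate for $h=f/\hatt u_a(\lambda_0,\dott)$, the averaging trick against the $L^1$-weight $r\hatt u_a(\lambda_0,\dott)^2$ to extract $\|f\|_{\Lr}^2$, and the use of Proposition~\ref{p3.1}(iii) to isolate $\int_a^{a''}|N_{\hatt u_a(\lambda_0,\dott),a''}f|^2\,dx$ as a piece of $\mathfrak{Q}_{a'',d}(f,f)$---is exactly the mechanism behind the cited lemma, and you correctly flag the remaining technical point (the lower bound $R(f)\ge -M(a'')\|f\|_{\Lr}^2$ on the regular interior piece) and defer it to the same reference the paper uses.
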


\begin{remark}\lb{r3.5c}
It is clear that the inequality in \eqref{3.7a} remains valid with $\mathfrak{Q}_{c,d}(f,f)$ replaced by $|\mathfrak{Q}_{c,d}(f,f)|$, $f\in \dom(\mathfrak{Q}_{c,d})$.  In particular, the sesquilinear forms $\mathfrak{q}_t(f,g):=\ol{\wti f(t)}\wti g(t)$, $f,g\in \dom(\mathfrak{q}_t):=\dom(\mathfrak{Q}_{c,d})$, $t\in \{a,b\}$, are infinitesimally bounded with respect to $\mathfrak{Q}_{c,d}$.\hfill$\diamond$
\end{remark}

In the next theorem we provide the sesquilinear form corresponding to the self-adjoint extensions $T_{\alpha,\beta}$, $\alpha,\beta\in[0,\pi)$, of $T_{min}$ with separated boundary conditions from Proposition \ref{23.t7.t3.12}\,$(ii)$.

\begin{theorem}\label{case0}
Assume Hypothesis \ref{h3.1}.  Let $a<a_0<b_0<b$ with $a_0$ and $b_0$ chosen so that \eqref{3.1} holds and suppose $c\in (a,a_0)$ and $d\in (b_0,b)$.  Then the following statements $(i)$--\,$(iv)$ hold:\\[1mm]
$(i)$ If $\alpha,\beta\in (0,\pi)$, then the sesquilinear form $\mathfrak{Q}_{c,d}^{\alpha,\beta}$ defined by
\begin{equation}\lb{3.8}
\begin{split}
\mathfrak{Q}_{c,d}^{\alpha,\beta}(f,g)=\mathfrak{Q}_{c,d}(f,g) + \cot(\beta)\ol{\wti f(b)}\wti g(b) - \cot(\alpha)\ol{\wti f(a)}\wti g(a),&\\
f,g\in \dom\big(\mathfrak{Q}_{c,d}^{\alpha,\beta}\big) = \dom(\mathfrak{Q}_{c,d}),&
\end{split}
\end{equation}
is densely defined, closed, symmetric, and lower semibounded.  In addition,
\begin{equation}\lb{3.9a}
(f,T_{\alpha,\beta}g)_{\Lr} = \mathfrak{Q}_{c,d}^{\alpha,\beta}(f,g),\quad f\in \dom\big(\mathfrak{Q}_{c,d}^{\alpha,\beta}\big),\, g\in \dom(T_{\alpha,\beta}).
\end{equation}
Hence, $\mathfrak{Q}_{c,d}^{\alpha,\beta}$ is the unique densely defined, closed, symmetric, lower semibounded sesquilinear form uniquely associated to $T_{\alpha,\beta}$, $\alpha,\beta\in (0,\pi)$, by the First Representation Theorem $($cf.~\cite[Theorem VI.2.1]{Ka80}$)$.\\[1mm]
$(ii)$ If $\alpha=0$ and $\beta\in (0,\pi)$, then the sesquilinear form defined by
\begin{equation}\lb{3.10a}
\begin{split}
&\mathfrak{Q}_{c,d}^{0,\beta}(f,g) = \mathfrak{Q}_{c,d}(f,g) + \cot(\beta)\ol{\wti f(b)}\wti g(b),\\
&f,g\in \dom\big(\mathfrak{Q}_{c,d}^{0,\beta}\big)=\big\{h\in \dom(\mathfrak{Q}_{c,d})\,\big|\, \wti h(a)=0\big\},
\end{split}
\end{equation}
is densely defined, closed, symmetric, and lower semibounded.  In addition,
\begin{equation}\lb{3.11a}
(f,T_{0,\beta}g)_{\Lr} = \mathfrak{Q}_{c,d}^{0,\beta}(f,g),\quad f\in \dom\big(\mathfrak{Q}_{c,d}^{0,\beta}\big),\, g\in \dom(T_{0,\beta}).
\end{equation}
Hence, $\mathfrak{Q}_{c,d}^{0,\beta}$ is the unique densely defined, closed, symmetric, lower semibounded sesquilinear form uniquely associated to $T_{0,\beta}$, $\beta\in (0,\pi)$, by the First Representation Theorem.\\[1mm]
$(iii)$ If $\alpha\in(0,\pi)$ and $\beta=0$, then the sesquilinear form defined by
\begin{equation}\lb{3.20aa}
\begin{split}
&\mathfrak{Q}_{c,d}^{\alpha,0}(f,g) = \mathfrak{Q}_{c,d}(f,g) - \cot(\alpha)\ol{\wti f(a)}\wti g(a),\\
&f,g\in \dom\big(\mathfrak{Q}_{c,d}^{\alpha,0}\big)=\big\{h\in \dom(\mathfrak{Q}_{c,d})\,\big|\, \wti h(b)=0\big\},
\end{split}
\end{equation}
is densely defined, closed, lower semibounded, symmetric.  In addition,
\begin{equation}
(f,T_{\alpha,0}g)_{\Lr} = \mathfrak{Q}_{c,d}^{\alpha,0}(f,g),\quad f\in \dom\big(\mathfrak{Q}_{c,d}^{\alpha,0}\big),\, g\in \dom(T_{\alpha,0}).
\end{equation}
Hence, $\mathfrak{Q}_{c,d}^{\alpha,0}$ is the unique densely defined, closed, symmetric, lower semibounded sesquilinear form uniquely associated to $T_{\alpha,0}$, $\alpha\in (0,\pi)$, by the First Representation Theorem.\\[1mm]
$(iv)$ If $\alpha=\beta=0$, then the sesquilinear form defined by
\begin{equation}\lb{3.22aa}
\begin{split}
&\mathfrak{Q}_{c,d}^{0,0}(f,g) = \mathfrak{Q}_{c,d}(f,g),\\
&f,g\in \dom\big(\mathfrak{Q}_{c,d}^{0,0}\big)=\big\{h\in \dom(\mathfrak{Q}_{c,d})\,\big|\, \wti h(a)=0=\wti h(b)\big\},
\end{split}
\end{equation}
is densely defined, closed, symmetric, and lower semibounded.  In addition,
\begin{equation}
(f,T_{0,0}g)_{\Lr} = \mathfrak{Q}_{c,d}^{0,0}(f,g),\quad f\in \dom\big(\mathfrak{Q}_{c,d}^{0,0}\big),\, g\in \dom(T_{0,0}).
\end{equation}
Hence, $\mathfrak{Q}_{c,d}^{0,0}$ is the unique densely defined, closed, symmetric, lower semibounded sesquilinear form uniquely associated to $T_{0,0}$ by the First Representation Theorem.  
\end{theorem}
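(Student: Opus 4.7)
The plan is to reduce all four cases to a single computation: substitute the boundary conditions from Proposition \ref{23.t7.t3.12}(ii) into the integration-by-parts identity of Lemma \ref{l3.3c}. For case (i), with $\alpha,\beta\in(0,\pi)$ and $g\in\dom(T_{\alpha,\beta})$, the defining conditions read $\wti g^{\,\prime}(a)=-\cot(\alpha)\wti g(a)$ and $\wti g^{\,\prime}(b)=-\cot(\beta)\wti g(b)$. Since $\dom(T_{\alpha,\beta})\subseteq\dom(T_{max})\subseteq\dom(\mathfrak{Q}_{c,d})$ by Proposition \ref{p3.1}(ii), Lemma \ref{l3.3c} yields immediately
\begin{equation*}
(f,T_{\alpha,\beta}g)_{\Lr}=\mathfrak{Q}_{c,d}(f,g)-\cot(\alpha)\ol{\wti f(a)}\wti g(a)+\cot(\beta)\ol{\wti f(b)}\wti g(b),
\end{equation*}
which is exactly \eqref{3.9a}. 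For case (ii), the constraint $\wti g(a)=0$ is built into both $\dom(T_{0,\beta})$ and the form domain in \eqref{3.10a}, so the endpoint $a$ terms in Lemma \ref{l3.3c} vanish for any $f\in\dom\big(\mathfrak{Q}_{c,d}^{0,\beta}\big)$ and $g\in\dom(T_{0,\beta})$ (noting that $\wti g^{\,\prime}(a)$ is well-defined and finite thanks to $g\in\dom(T_{max})$), leaving precisely \eqref{3.11a}. Case (iii) is the mirror image, and case (iv) is obtained by simultaneously imposing $\wti g(a)=\wti g(b)=0$ on both sides.

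For the closedness, symmetry, and lower semiboundedness of $\mathfrak{Q}_{c,d}^{\alpha,\beta}$ in case (i), I will invoke Proposition \ref{p3.1}(i) together with Remark \ref{r3.5c}: the bilinear forms $\mathfrak{q}_t(f,g)=\ol{\wti f(t)}\wti g(t)$, $t\in\{a,b\}$, are symmetric and infinitesimally bounded with respect to $\mathfrak{Q}_{c,d}$, so the standard KLMN perturbation argument shows that the sum $\mathfrak{Q}_{c,d}-\cot(\alpha)\mathfrak{q}_a+\cot(\beta)\mathfrak{q}_b$ is closed, symmetric, and lower semibounded on the same dense domain $\dom(\mathfrak{Q}_{c,d})$. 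In cases (ii)--(iv), the restriction to the subspace $\{h\in\dom(\mathfrak{Q}_{c,d}):\wti h(t)=0\}$ (for the appropriate $t\in\{a,b\}$) is closed in the form graph norm, because the linear functional $h\mapsto\wti h(t)$ is continuous in that norm by Proposition \ref{p3.3}; hence the restricted form remains closed, symmetric, and lower semibounded. Density of the restricted form domain in $\Lr$ is automatic since it contains $\dom(T_{\alpha,\beta})$, which is dense by self-adjointness of $T_{\alpha,\beta}$.

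The identification with $T_{\alpha,\beta}$ via the First Representation Theorem is then routine: the theorem associates to each $\mathfrak{Q}_{c,d}^{\alpha,\beta}$ a unique self-adjoint operator $S$ with $\dom(S)\subseteq\dom\big(\mathfrak{Q}_{c,d}^{\alpha,\beta}\big)$ and $(f,Sg)_{\Lr}=\mathfrak{Q}_{c,d}^{\alpha,\beta}(f,g)$ for $g\in\dom(S)$ and $f\in\dom\big(\mathfrak{Q}_{c,d}^{\alpha,\beta}\big)$. The form identity displayed above proves $T_{\alpha,\beta}\subseteq S$, and self-adjointness of both operators then forces $T_{\alpha,\beta}=S$.

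The main obstacle I anticipate is not any individual calculation but the bookkeeping in cases (ii)--(iv): one must verify carefully that imposing $\wti f(t)=0$ in the form domain genuinely annihilates the corresponding $\ol{\wti f(t)}\wti g^{\,\prime}(t)$ boundary contribution arising from Lemma \ref{l3.3c}, even though $\wti g^{\,\prime}(t)$ is only defined under the additional hypothesis $g\in\dom(T_{max})$. Since in the form identities we only ever evaluate on $g\in\dom(T_{\alpha,\beta})\subseteq\dom(T_{max})$, $\wti g^{\,\prime}(t)$ is finite and the product vanishes. The only genuinely non-trivial analytic input is the continuity of the endpoint evaluations $h\mapsto\wti h(t)$ in the form norm, which is exactly the content of Proposition \ref{p3.3}.
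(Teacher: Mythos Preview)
Your proposal is correct and follows essentially the same approach as the paper: both use Lemma~\ref{l3.3c} together with the separated boundary conditions to obtain the form identity, invoke Remark~\ref{r3.5c} (infinitesimal form boundedness of $\mathfrak{q}_t$) for the closedness and lower semiboundedness in case~(i), and use continuity of $h\mapsto\wti h(t)$ in the form norm (Proposition~\ref{p3.3}) to handle the restricted domains in cases~(ii)--(iv). The only cosmetic differences are that the paper writes out the Cauchy-sequence closedness argument explicitly rather than citing ``kernel of a continuous functional is closed,'' and it obtains density from $\dom(T_{min})\subseteq\dom\big(\mathfrak{Q}_{c,d}^{\alpha,\beta}\big)$ rather than from $\dom(T_{\alpha,\beta})$.
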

\begin{proof}
It is clear by inspection that $\mathfrak{Q}_{c,d}^{\alpha,\beta}$ is symmetric.  That $\mathfrak{Q}_{c,d}^{\alpha,\beta}$ is densely defined, closed, and lower semibounded follows from Remark \ref{r3.5c} (specifically, the infinitesimal form boundedness of $\mathfrak{q}_t$, $t\in\{a,b\}$, with respect to $\mathfrak{Q}_{c,d}$).  To establish \eqref{3.8}, one applies Lemma \ref{l3.3c}---specifically \eqref{3.7c}---and the boundary conditions inherent in the definition of $\dom(T_{\alpha,\beta})$:
\begin{equation}
\begin{split}
(f,T_{\alpha,\beta}g)_{\Lr} &= \mathfrak{Q}_{c,d}(f,g) + \overline{\wti f(a)}\wti g^{\,\prime}(a) - \overline{\wti f(b)}\wti g^{\,\prime}(b)\\
&= \mathfrak{Q}_{c,d}(f,g) - \cot(\alpha)\overline{\wti f(a)}\wti g(a) + \cot(\beta)\overline{\wti f(b)}\wti g(b)\\
&= \mathfrak{Q}_{c,d}^{\alpha,\beta}(f,g),\quad f\in \dom\big(\mathfrak{Q}_{c,d}^{\alpha,\beta}\big),\, g\in \dom(T_{\alpha,\beta}).
\end{split}
\end{equation}

The proofs of $(ii)$, $(iii)$, and $(iv)$ are all similar.  We will provide a sketch of the proof of the claims in $(ii)$ and omit the details for $(iii)$ and $(iv)$.  One notes that $\mathfrak{Q}_{c,d}^{0,\beta}$ is densely defined since $\dom(T_{min})\subseteq \dom\big(\mathfrak{Q}_{c,d}^{0,\beta}\big)$ and $T_{min}$ is densely defined.  Moreover, $\mathfrak{Q}_{c,d}^{0,\beta}$ is lower semibounded since it is a restriction of $\mathfrak{Q}_{c,d}^{\pi/2,\beta}$, and the latter is lower semibounded by part $(i)$.

To prove item $(ii)$, one notes that $\dom(T_{min})\subseteq \dom\big(\mathfrak{Q}_{c,d}^{0,\beta}\big)$, so $\mathfrak{Q}_{c,d}^{0,\beta}$ is densely defined.  Let $\mathfrak{Q}_{c,d}'$ denote the restriction of $\mathfrak{Q}_{c,d}$ to $\dom\big(\mathfrak{Q}_{c,d}^{0,\beta}\big)$, where the latter domain is defined according to \eqref{3.10a}.  Since $\mathfrak{Q}_{c,d}^{0,\beta}$ is an infinitesimally form bounded perturbation of $\mathfrak{Q}_{c,d}'$ by \eqref{3.7a}, to prove $\mathfrak{Q}_{c,d}^{0,\beta}$ is closed, it suffices to show that $\mathfrak{Q}_{c,d}'$ is closed.  If $\{f_n\}_{n=1}^{\infty}\subset \dom(\mathfrak{Q}_{c,d}')=\dom\big(\mathfrak{Q}_{c,d}^{0,\beta}\big)$, $\|f_n-f\|_{\Lr}\to 0$ for some $f\in \Lr$, and $\mathfrak{Q}_{c,d}'(f_n-f_m,f_n-f_m)\to 0$, then the fact that $\mathfrak{Q}_{c,d}$ is closed (cf.~Proposition \ref{p3.1}) implies $f\in \dom(\mathfrak{Q}_{c,d})$ and $\mathfrak{Q}_{c,d}(f_n-f,f_n-f)\to 0$.  Using \eqref{3.7a} one obtains
\begin{align}
\big|\wti f(a)\big|^2 &= \big|\wti f_n(a) - \wti f(a)\big|^2\lb{3.25rr}\\
&\leq \mathfrak{Q}_{c,d}(f_n-f,f_n-f) + C_0\|f_n-f\|_{\Lr}^2,\quad n\in \bbN,\no
\end{align}
for some scalar $C_0\in (0,\infty)$ that does not depend on $n\in \bbN$.  Taking $n\to \infty$ throughout \eqref{3.25rr}, one obtains $\wti f(a) = 0$.  Therefore, $f\in \dom\big(\mathfrak{Q}_{c,d}^{0,\beta}\big)$, and since $\mathfrak{Q}_{c,d}$ is an extension of $\mathfrak{Q}_{c,d}'$, $\mathfrak{Q}_{c,d}'(f_n-f,f_n-f)\to 0$.  Hence, $\mathfrak{Q}_{c,d}'$ is closed, and it follows that $\mathfrak{Q}_{c,d}^{0,\beta}$ is closed and lower semibounded.  That $\mathfrak{Q}_{c,d}^{0,\beta}$ is symmetric is clear by inspection.  Finally, the verification of \eqref{3.11a} is entirely analogous to that of \eqref{3.9a} (invoking Lemma \ref{l3.3c}, etc.), so we omit the details.
\end{proof}

In the next theorem we provide the sesquilinear form corresponding to the self-adjoint extensions $T_{\varphi,R}$, $\varphi\in[0,\pi)$, $R\in SL(2,\bbR)$, of $T_{min}$ with coupled boundary conditions from Proposition \ref{23.t7.t3.12}\,$(iii)$.

\begin{theorem}\label{case1}
Assume Hypothesis \ref{h3.1}.  Let $a<a_0<b_0<b$ with $a_0$ and $b_0$ chosen so that \eqref{3.1} holds and suppose $c\in (a,a_0)$ and $d\in (b_0,b)$.  If $\varphi \in [0,\pi)$ and $R\in SL(2,\bbR)$, then the following statements $(i)$ and $(ii)$ hold:\\[1mm]
$(i)$ If $R_{1,2}\neq 0$, then the sesquilinear form $\mathfrak{Q}_{c,d}^{\varphi,R}$ defined by
\begin{align}
\mathfrak{Q}_{c,d}^{\varphi,R}(f,g) &= \mathfrak{Q}_{c,d}(f,g) - \frac{1}{R_{1,2}}\Big\{R_{1,1}\ol{\wti f(a)}\wti g(a) - e^{-i\varphi}\ol{\wti f(a)}\wti g(b)\lb{3.25}\\
&\quad - e^{i\varphi}\ol{\wti f(b)}\wti g(a) + R_{2,2}\ol{\wti f(b)}\wti g(b) \Big\},\quad f,g\in \dom\big(\mathfrak{Q}_{c,d}^{\varphi,R}\big)=\dom(\mathfrak{Q}_{c,d}),\no
\end{align}
is densely defined, closed, symmetric, and lower semibounded.  In addition,
\begin{equation}\lb{3.26}
(f,T_{\varphi,R}g)_{\Lr} = \mathfrak{Q}_{c,d}^{\varphi,R}(f,g),\quad f\in \dom\big(\mathfrak{Q}_{c,d}^{\varphi,R}\big),\, g\in \dom(T_{\varphi,R}).
\end{equation}
Hence, $\mathfrak{Q}_{c,d}^{\varphi,R}$ is the unique densely defined, closed, symmetric, lower semibounded sesquilinear form uniquely associated to $T_{\varphi,R}$ by the First Representation Theorem.\\[1mm]
$(ii)$ If $R_{1,2}= 0$, then the sesquilinear form $\mathfrak{Q}_{c,d}^{\varphi,R}$ defined by
\begin{align}
&\mathfrak{Q}_{c,d}^{\varphi,R}(f,g) = \mathfrak{Q}_{c,d}(f,g) - R_{1,1}R_{2,1}\ol{\wti f(a)}\wti g(a),\lb{3.27}\\
&f,g\in \dom\big(\mathfrak{Q}_{c,d}^{\varphi,R}\big)=\big\{h\in \dom(\mathfrak{Q}_{c,d})\,\big|\, \wti h(b) = e^{i\varphi}R_{1,1}\wti h(a)\big\},\no
\end{align}
is densely defined, closed, symmetric, and lower semibounded.  In addition,
\begin{equation}\lb{3.28}
(f,T_{\varphi,R}g)_{\Lr} = \mathfrak{Q}_{c,d}^{\varphi,R}(f,g),\quad f\in \dom\big(\mathfrak{Q}_{c,d}^{\varphi,R}\big),\, g\in \dom(T_{\varphi,R}).
\end{equation}
Hence, $\mathfrak{Q}_{c,d}^{\varphi,R}$ is the unique densely defined, closed, symmetric, lower semibounded sesquilinear form uniquely associated to $T_{\varphi,R}$ by the First Representation Theorem.
\end{theorem}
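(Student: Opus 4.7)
The plan is to follow the template of the proof of Theorem \ref{case0}, starting from the integration-by-parts identity in Lemma \ref{l3.3c} and then using the coupled boundary relations defining $\dom(T_{\varphi,R})$ to rewrite the boundary term $\overline{\wti f(a)}\wti g^{\,\prime}(a) - \overline{\wti f(b)}\wti g^{\,\prime}(b)$ purely in terms of the generalized boundary values $\wti f(t), \wti g(t)$ at $t \in \{a,b\}$. For $g \in \dom(T_{\varphi,R})$, Proposition \ref{23.t7.t3.12}\,$(iii)$ yields
\begin{align*}
\wti g(b) &= e^{i\varphi}\bigl(R_{1,1}\wti g(a) + R_{1,2}\wti g^{\,\prime}(a)\bigr), \\
\wti g^{\,\prime}(b) &= e^{i\varphi}\bigl(R_{2,1}\wti g(a) + R_{2,2}\wti g^{\,\prime}(a)\bigr).
\end{align*}

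In case $(i)$, since $R_{1,2}\neq 0$, the first equation solves for $\wti g^{\,\prime}(a)$ as an affine combination of $\wti g(a)$ and $\wti g(b)$; substituting into the second equation and exploiting $R_{1,1}R_{2,2} - R_{1,2}R_{2,1} = \det R = 1$ gives $\wti g^{\,\prime}(b) = -R_{1,2}^{-1} e^{i\varphi}\wti g(a) + R_{1,2}^{-1}R_{2,2}\wti g(b)$. Plugging both expressions into \eqref{3.7c} produces precisely the boundary correction in \eqref{3.25}, which establishes \eqref{3.26} for all $f \in \dom(\mathfrak{Q}_{c,d})$ and $g \in \dom(T_{\varphi,R})$. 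In case $(ii)$, $R_{1,2} = 0$ combined with $\det R = 1$ forces $R_{1,1}R_{2,2} = 1$ and $R_{1,1} \neq 0$; imposing on $f$ the same boundary relation $\wti f(b) = e^{i\varphi}R_{1,1}\wti f(a)$ that defines $\dom(\mathfrak{Q}_{c,d}^{\varphi,R})$, the $\wti g^{\,\prime}(a)$ contributions cancel because of $R_{1,1}R_{2,2} = 1$, and only $-R_{1,1}R_{2,1}\overline{\wti f(a)}\wti g(a)$ survives; this yields \eqref{3.27} and \eqref{3.28}.

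The remaining analytic properties follow from Proposition \ref{p3.1} together with the infinitesimal form boundedness of the boundary sesquilinear forms $(f,g)\mapsto \overline{\wti f(s)}\wti g(t)$, $s,t\in\{a,b\}$, with respect to $\mathfrak{Q}_{c,d}$ (Proposition \ref{p3.3} and Remark \ref{r3.5c}). Symmetry is immediate by inspection: in case $(i)$ the matrix of coefficients of the four bilinear boundary products is Hermitian because $R_{1,1}, R_{2,2} \in \bbR$ and the cross-terms carry the conjugate pair $e^{\pm i\varphi}$; in case $(ii)$ the single correction $-R_{1,1}R_{2,1}$ is real. Density in $\Lr$ is clear in case $(i)$ since $\dom(\mathfrak{Q}_{c,d}^{\varphi,R}) = \dom(\mathfrak{Q}_{c,d})$, and in case $(ii)$ since $\dom(T_{min})\subseteq \dom(\mathfrak{Q}_{c,d}^{\varphi,R})$ by Proposition \ref{23.t7.t3.12}\,$(i)$.

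The main obstacle is closedness in case $(ii)$, where the form domain is the intersection of $\dom(\mathfrak{Q}_{c,d})$ with the linear constraint $\wti h(b) - e^{i\varphi}R_{1,1}\wti h(a) = 0$. Following the argument in the proof of Theorem \ref{case0}\,$(ii)$, one considers a sequence $\{f_n\}$ in this domain with $\|f_n - f\|_{\Lr}\to 0$ that is Cauchy in the form norm of the restriction of $\mathfrak{Q}_{c,d}$; closedness of $\mathfrak{Q}_{c,d}$ gives $f \in \dom(\mathfrak{Q}_{c,d})$, and the inequality \eqref{3.7a} forces $\wti f_n(t) \to \wti f(t)$ for $t\in\{a,b\}$, so passing to the limit preserves the linear constraint. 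The subsequent finite-rank perturbation by the real boundary term is absorbed via infinitesimal form boundedness, so the perturbed form is closed and lower semibounded. Uniqueness of the sesquilinear form associated with $T_{\varphi,R}$ in each case then follows from the First Representation Theorem \cite[Theorem VI.2.1]{Ka80}.
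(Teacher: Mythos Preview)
Your proposal is correct and follows essentially the same approach as the paper's proof: both start from Lemma~\ref{l3.3c}, solve the coupled boundary relations for $\wti g^{\,\prime}(a)$ and $\wti g^{\,\prime}(b)$ (using $\det R=1$) to rewrite the boundary term, invoke Proposition~\ref{p3.3}/Remark~\ref{r3.5c} for the analytic properties in case~$(i)$, and in case~$(ii)$ establish closedness of the constrained domain via the same sequential argument with \eqref{3.7a}. The computations and the structure of the argument match the paper's proof in every essential respect.
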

\begin{proof}
The proof of item $(i)$ begins by noting that $\mathfrak{Q}_{c,d}^{\varphi,R}$ is an infinitesimally form bounded perturbation of $\mathfrak{Q}_{c,d}$ by Proposition \ref{p3.3}.  Hence, $\mathfrak{Q}_{c,d}^{\varphi,R}$ is densely defined, closed, and lower semibounded by Proposition \ref{p3.1}.  Furthermore, $\mathfrak{Q}_{c,d}^{\varphi,R}$ is symmetric by inspection.  To prove \eqref{3.26}, let $f\in \dom\big(\mathfrak{Q}_{c,d}^{\varphi,R}\big)=\dom(\mathfrak{Q}_{c,d})$ and $g\in \dom(T_{\varphi,R})$.  Using the boundary conditions for $g$ given in \eqref{23.13.2.27a}, one obtains,
\begin{equation}\lb{3.31cc}
\begin{split}
\wti g^{\,\prime}(a) &= \frac{1}{R_{1,2}}\big[e^{-i\varphi}\wti g(b) - R_{1,1}\wti g(a) \big],\\
\wti g^{\,\prime}(b) &= e^{i\varphi}\big[R_{2,1}\wti g(a) + R_{2,2}\wti g^{\,\prime}(a) \big].
\end{split}
\end{equation}
Therefore, using $\det_{\bbC^2}(R)=1$ and \eqref{3.31cc}, one computes,
\begin{align}
&\ol{\wti f(a)}\wti g^{\,\prime}(a) - \overline{\wti f(b)}\wti g^{\,\prime}(b)    \no\\
&\quad = \frac{\ol{\wti f(a)}}{R_{1,2}}\big\{ e^{-i\varphi}\wti g(b) - R_{1,1} \wti g(a)\big\} - e^{i\varphi}\ol{\wti f(b)}\big\{R_{2,1}\wti g(a) + R_{2,2}\wti g^{\,\prime}(a) \big\}\no\\
&\quad=\frac{\ol{\wti f(a)}}{R_{1,2}}\big\{e^{-i\varphi}\wti g(b) - R_{1,1}\wti g(a) \big\}    \no \\
& \qquad -e^{i\varphi} \ol{\wti f(b)} \bigg\{R_{2,1}\wti g(a) + \frac{R_{2,2}}{R_{1,2}}\big[e^{-i\varphi}\wti g(b) - R_{1,1}\wti g(a) \big] \bigg\}\no\\
&\quad = -\frac{1}{R_{1,2}}\Big\{ R_{1,1}\ol{\wti f(a)}\wti g(a) - e^{-i\varphi}\ol{\wti f(a)}\wti g(b) - e^{i\varphi}\ol{\wti f(b)}\wti g(a) +R_{2,2}\ol{\wti f(b)}\wti g(b)\Big\},    \lb{3.32cc}
\end{align}
after taking a cancellation into account.  The equality in \eqref{3.26} now follows from Lemma \ref{l3.3c} and \eqref{3.32cc}.

To prove item $(ii)$, one notes that $\dom(T_{min})\subseteq \dom\big(\mathfrak{Q}_{c,d}^{\varphi,R}\big)$, so $\mathfrak{Q}_{c,d}^{\varphi,R}$ is densely defined.  Let $\mathfrak{Q}_{c,d}'$ denote the restriction of $\mathfrak{Q}_{c,d}$ to $\dom\big(\mathfrak{Q}_{c,d}^{\varphi,R}\big)$, where the latter domain is defined according to \eqref{3.27}.  Since $\mathfrak{Q}_{c,d}^{\varphi,R}$ is an infinitesimally form bounded perturbation of $\mathfrak{Q}_{c,d}'$ by \eqref{3.7a}, to prove $\mathfrak{Q}_{c,d}^{\varphi,R}$ is closed, it suffices to show that $\mathfrak{Q}_{c,d}'$ is closed.  If $\{f_n\}_{n=1}^{\infty}\subset \dom(\mathfrak{Q}_{c,d}')=\dom\big(\mathfrak{Q}_{c,d}^{\varphi,R}\big)$, $\|f_n-f\|_{\Lr}\to 0$ for some $f\in \Lr$, and $\mathfrak{Q}_{c,d}'(f_n-f_m,f_n-f_m)\to 0$, then the fact that $\mathfrak{Q}_{c,d}$ is closed (cf.~Proposition \ref{p3.1}) implies $f\in \dom(\mathfrak{Q}_{c,d})$ and $\mathfrak{Q}_{c,d}(f_n-f,f_n-f)\to 0$. Using \eqref{3.7a} one obtains 
\begin{align}
\big|\wti f(b)-e^{i\varphi}R_{1,1}\wti f(a)\big|^2 &= \big|\big[\wti f_n(b) - \wti f(b)\big] - e^{i\varphi}R_{1,1}\big[\wti f_n(a) - \wti f(a)\big]\big|^2\lb{3.29}\\
&\leq \mathfrak{Q}_{c,d}(f_n-f,f_n-f) + C_0\|f_n-f\|_{\Lr}^2,\quad n\in \bbN,\no
\end{align}
for some scalar $C_0\in (0,\infty)$ that does not depend on $n\in \bbN$.  Taking $n\to \infty$ throughout \eqref{3.29}, one obtains $\wti f(b) = e^{i\varphi}R_{1,1}\wti f(a)$.  Therefore, $f\in \dom\big(\mathfrak{Q}_{c,d}^{\varphi,R}\big)$, and since $\mathfrak{Q}_{c,d}$ is an extension of $\mathfrak{Q}_{c,d}'$, $\mathfrak{Q}_{c,d}'(f_n-f,f_n-f)\to 0$.  Hence, $\mathfrak{Q}_{c,d}'$ is closed, and it follows that $\mathfrak{Q}_{c,d}^{\varphi,R}$ is closed and lower semibounded.

To verify \eqref{3.28}, let $f\in \dom\big(\mathfrak{Q}_{c,d}^{\varphi,R}\big)$ and $g\in \dom(T_{\varphi,R})$.  Using the relations $\wti f(b) = e^{i\varphi}R_{1,1}\wti f(a)$, $\wti g^{\,\prime}(b) = e^{i\varphi}\big[R_{2,1}\wti g(a) + R_{2,2}\wti g^{\,\prime}(a)\big]$ and $1=\det_{\bbC^2}(R)=R_{1,1}R_{2,2}$, one computes:
\begin{align}
\ol{\wti f(a)}\wti g^{\,\prime}(a) - \ol{\wti f(b)}\wti g^{\,\prime}(b)\lb{3.34cc}&= \ol{\wti f(a)}\wti g^{\,\prime}(a) - e^{-i\varphi}R_{1,1}\ol{\wti f(a)}e^{i\varphi}\big[R_{2,1}\wti g(a) + R_{2,2}\wti g^{\,\prime}(a)\big]\no\\
&\quad = \ol{\wti f(a)}\wti g^{\,\prime}(a) - R_{1,1}R_{2,1}\ol{\wti f(a)}\wti g(a) - R_{1,1}R_{2,2}\ol{\wti f(a)}\wti g^{\,\prime}(a)\no\\
&\quad = - R_{1,1}R_{2,1}\ol{\wti f(a)}\wti g(a).
\end{align}
The equality in \eqref{3.28} now follows from Lemma \ref{l3.3c} and \eqref{3.34cc}.
\end{proof}

\begin{remark}
$(i)$ Since $\mathfrak{Q}_{c,d}$ is independent of the choices of $c\in (a,a_0)$ and $d\in (b_0,b)$ (cf.~Proposition \ref{p3.1}\,$(iii)$), it follows that the sesquilinear forms $\mathfrak{Q}_{c,d}^{\alpha,\beta}$, $\alpha,\beta\in [0,\pi)$, and $\mathfrak{Q}_{c,d}^{\varphi,R}$, $\varphi\in[0,\pi)$, $R\in SL(2,\bbR)$, are also independent of $c$ and $d$.\\[1mm]
$(ii)$ It is clear that the sesquilinear forms for $T_{\alpha,\beta}$ and $T_{\varphi,R}$ in \eqref{3.8}, \eqref{3.10a}, \eqref{3.20aa}, \eqref{3.22aa}, \eqref{3.25}, and \eqref{3.27} depend on the choices of the principal and nonprincipal solutions $u_t(\lambda_0,\dott)$ and $\widehat u_t(\lambda_0,\dott)$, $t\in \{a,b\}$.  However, this is to be expected, as the parametrizations of the self-adjoint extensions of $T_{min}$ given in Proposition \ref{23.t7.t3.12} also depend on the choices of the principal and nonprincipal solutions $u_t(\lambda_0,\dott)$ and $\widehat u_t(\lambda_0,\dott)$, $t\in \{a,b\}$. 
\hfill$\diamond$
\end{remark}

\section{Case Two:  One Limit Circle Endpoint} \lb{s4}

In this section we provide the sesquilinear forms corresponding to the lower semibounded self-adjoint realizations $T_\alpha$ from Proposition \ref{p4.1}. 
We assume, in addition to Hypothesis \ref{h2.1}, that $\tau$ is in the limit circle case at exactly one endpoint of the interval $(a,b)$ and that $T_{min}\geq \lambda_0I_{\Lr}$ for some $\lambda_0\in \bbR$.
For simplicity, we consider the case when $\tau$ is in the limit circle case at $a$ and in the limit point case at $b$.  The situation where $\tau$ is in the limit point case at $a$ and in the limit circle case at $b$ is entirely analogous.  To be precise, we introduce the following hypothesis.

\begin{hypothesis}\lb{h4.1}
In addition to Hypothesis \ref{h2.1}, assume that $\tau$ is in the limit circle case at $a$ and in the limit point case at $b$.  Suppose that $T_{min} \geq \lambda_0 I_{\Lr}$ for some $\lambda_0 \in \bbR$ and that $u_t(\lambda_0,\dott)$ and $\widehat u_t(\lambda_0,\dott)$ are principal and nonprincipal solutions of $\tau u = \lambda_0 u$ on $(a,b)$, respectively, at $t\in \{a,b\}$ that satisfy \eqref{7.3.33AB}.
\end{hypothesis}

Assuming Hypothesis \ref{h4.1}, choose $a_0,b_0\in (a,b)$ such that $a<a_0<b_0<b$ and \eqref{3.1} holds.  Let $c\in (a,a_0)$ and $d\in (b_0,b)$ be fixed.  Next, we formally replace the nonprincipal solution $\widehat{u}_b(\lambda_0,\dott)$ in Section \ref{s3} with the principal solution $u_b(\lambda_0,\dott)$.  More precisely, introducing the differential expressions $N_{\widehat{u}_a(\lambda_0,\dott),c}$ as in \eqref{3.2vv} and $N_{u_b(\lambda_0,\dott),d}$ by
\begin{equation}\lb{4.1vv}
N_{u_b(\lambda_0,\dott),d}g = p^{1/2}u_b(\lambda_0,\dott)\bigg(\frac{g}{u_b(\lambda_0,\dott)}\bigg)',\quad g\in AC_{loc}((d,b)),
\end{equation}
one defines the symmetric sesquilinear form $\mathfrak{Q}_{c,d}$ as follows:
\begin{align}
&\dom(\mathfrak{Q}_{c,d})=\big\{h\in \Lr\,\big|\, h\in AC_{loc}((a,b)),\no\\
&\hspace*{2.3cm} p^{-1/2}h^{[1]}\in L^2((c,d);dx),\, N_{\widehat{u}_a(\lambda_0,\dott),c}h\in L^2((a,c);dx),\lb{4.3}\\
&\hspace*{6.4cm} N_{u_b(\lambda_0,\dott),d}h\in L^2((d,b);dx)\big\},\no
\end{align}
and
\begin{align}
\mathfrak{Q}_{c,d}(f,g) &= \int_a^c dx\, \ol{(N_{\widehat{u}_a(\lambda_0,\dott),c}f)(x)} (N_{\widehat{u}_a(\lambda_0,\dott),c}g)(x) \no\\
& \quad + \int_d^b dx\, \ol{(N_{u_b(\lambda_0,\dott),d}f)(x)} (N_{u_b(\lambda_0,\dott),d}g)(x)     \no\\
&\quad + \lambda_0 \int_a^c r(x) \,dx\, \ol{f(x)}g(x) + \lambda_0\int_d^b r(x) \,dx\, \ol{f(x)}g(x)    \no \\
& \quad + \int_c^d\, dx\, \Big[p(x)^{-1}\ol{f^{[1]}(x)} g^{[1]}(x) + q(x)\ol{f(x)}g(x)\Big]    \no\\
&\quad+ \frac{\widehat{u}_a^{[1]}(\lambda_0,c)}{\widehat{u}_a(\lambda_0,c)}\ol{f(c)}g(c) - \frac{u_b^{[1]}(\lambda_0,d)}{u_b(\lambda_0,d)}\ol{f(d)}g(d),\quad f,g\in \dom(\mathfrak{Q}_{c,d}).\lb{4.4}
\end{align}

Several important properties of the sesquilinear form $\mathfrak{Q}_{c,d}$ are collected in the following result.

\begin{proposition}\lb{p4.2}
Assume Hypothesis \ref{h4.1}.  Let $a<a_0<b_0<b$ with $a_0$ and $b_0$ chosen so that \eqref{3.1} holds and suppose $c\in (a,a_0)$ and $d\in (b_0,b)$.  Then the following statements $(i)$--\,$(vi)$ hold:\\[1mm]
$(i)$ The sesquilinear form $\mathfrak{Q}_{c,d}$ defined by \eqref{4.3} and \eqref{4.4} is densely defined, closed, and lower semibounded in $\Lr$.\\[1mm]
$(ii)$ $\dom(T_{max})\subseteq \dom(\mathfrak{Q}_{c,d})$.\\[1mm]
$(iii)$  If $c'\in (a,a_0)$ and $d'\in (b_0,b)$, then $\mathfrak{Q}_{c,d}=\mathfrak{Q}_{c',d'}$.  That is, the sesquilinear form defined by \eqref{4.3} and \eqref{4.4} is independent of the choices of $c\in (a,a_0)$ and $d\in (b_0,b)$.\\[1mm]
$(iv)$  If $g\in \dom(\mathfrak{Q}_{c,d})$, then the following limit exists:
\begin{equation}
\wti g(a) := \lim_{x \downarrow a} \f{g(x)}{\hatt u_a(\lambda_0,x)}.
\end{equation}
In particular, the generalized boundary value $\wti g(a)$ introduced in \eqref{7.3.34A} for functions in $\dom(T_{max})$ extends to functions in $\dom(\mathfrak{Q}_{c,d})$.\\
$(v)$  If $f\in \dom(\mathfrak{Q}_{c,d})$ and $g\in \dom(T_{max})$, then
\begin{equation}\lb{4.7v}
\lim_{b'\uparrow b} \frac{\ol{f(b')}}{u_b(\lambda_0,b')}W(u_b(\lambda_0,\dott),g)(b')=0.
\end{equation}
$(vi)$  For every $\varepsilon>0$ there exists $C(\varepsilon)>0$ such that
\begin{equation}\lb{4.8vvv}
\big|\wti f(a)\big|^2 \leq \varepsilon\mathfrak{Q}_{c,d}(f,f) + C(\varepsilon)\|f\|_{\Lr}^2,\quad f\in \dom(\mathfrak{Q}_{c,d}).
\end{equation}
\end{proposition}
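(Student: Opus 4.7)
The plan parallels that of Proposition \ref{p3.1}: items $(i)$--\,$(iv)$ go through by essentially the same arguments used in the two-limit-circle case, because the Jacobi factorization identity
\begin{equation*}
-\big(g^{[1]}\big)' + \f{\big(u^{[1]}\big)'}{u}\, g = -\f{1}{u}\bigg[p u^2 \bigg(\f{g}{u}\bigg)'\bigg]'
\end{equation*}
holds for any nonvanishing solution $u$ of $\tau u = \lambda_0 u$; the principal solution $u_b(\lambda_0,\dott)$ on $(b_0,b)$ now plays the same structural role in the construction of $\mathfrak{Q}_{c,d}$ as $\widehat{u}_b(\lambda_0,\dott)$ did in Section \ref{s3}. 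Concretely, $(ii)$ follows by integrating $(g, T_{max} g)_{\Lr}$ by parts near each endpoint via Jacobi factorization and recognizing the resulting integrals as the squared $L^2$-norms of $N_{\widehat{u}_a(\lambda_0,\dott),c}g$ and $N_{u_b(\lambda_0,\dott),d}g$; density in $(i)$ is then immediate, and closedness and lower semiboundedness come from the arguments of \cite[Theorem 6.10.9, Lemma 6.9.4]{BHS20}. Item $(iii)$ is a direct computation showing that the extra boundary evaluations at $c$ and $d$ telescope under a change of cutoff points, and $(iv)$ is identical to Proposition \ref{p3.1}\,$(iv)$ because the form near $a$ is structurally unchanged.

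The key new content is item $(v)$. The plan is to first derive, by integrating $W(u_b(\lambda_0,\dott),g)' = -r u_b(\lambda_0,\dott)(\tau g - \lambda_0 g)$ by parts against $\ol{f/u_b(\lambda_0,\dott)}$, the identity
\begin{align*}
\f{\ol{f(b')}}{u_b(\lambda_0,b')}\, W(u_b(\lambda_0,\dott),g)(b')
&= \f{\ol{f(d)}}{u_b(\lambda_0,d)}\, W(u_b(\lambda_0,\dott),g)(d) \\
&\quad + \int_d^{b'} \ol{\big(N_{u_b(\lambda_0,\dott),d} f\big)(x)}\, \big(N_{u_b(\lambda_0,\dott),d} g\big)(x)\, dx \\
&\quad - \int_d^{b'} r(x)\ol{f(x)}\big[(\tau g)(x) - \lambda_0 g(x)\big] dx,
\end{align*}
in which each right-hand term converges as $b' \uparrow b$ by the Cauchy--Schwarz inequality, using $N_{u_b(\lambda_0,\dott),d} f,\, N_{u_b(\lambda_0,\dott),d} g \in L^2((d,b);dx)$ together with $f, \tau g, g \in \Lr$. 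Hence the limit in \eqref{4.7v}, call it $L(f,g)$, exists. To show $L(f,g) = 0$, I would: $(a)$ fix a function $v_a \in \dom(T_{max})$ supported in $[a, b_0]$ with $\wti v_a(a) = 1$ and $\wti v_a^{\,\prime}(a) = 0$ (e.g., $v_a = \chi\, \widehat{u}_a(\lambda_0,\dott)$ for a smooth cutoff $\chi$); $(b)$ decompose $g = g_0 + \wti g(a) v_a$ so that $\wti g_0(a) = 0$ and hence $g_0 \in \dom(T_{\alpha = 0})$; $(c)$ observe that $L(f, v_a) = 0$ because $v_a \equiv 0$ near $b$, so $L(f,g) = L(f, g_0)$; $(d)$ identify the restriction of $\mathfrak{Q}_{c,d}$ to $\{\wti h(a) = 0\}$ with the form of $T_{\alpha = 0}$ via the First Representation Theorem, by showing that the associated self-adjoint operator $S$ satisfies $\dom(S) \subseteq \dom(T_{max})$ (testing against $C_c^\infty((a,b))$) and $\wti f(a) = 0$ for $f \in \dom(S)$; $(e)$ compare the operator identity $(f_0, T_{\alpha=0} g_0)_{\Lr} = \mathfrak{Q}_{c,d}(f_0, g_0)$ with the full Jacobi-factorization expression for $(f_0, T_{max} g_0)_{\Lr}$, which carries the extra $b$-contribution $-L(f_0, g_0)$; this forces $L(f_0, g_0) = 0$ whenever $\wti f_0(a) = 0$. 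Finally, a decomposition $f = f_0 + \wti f(a) v_a$ combined with the trivial $L(v_a, g_0) = 0$ yields $L(f, g) = 0$ in full generality.

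The main obstacle I anticipate is step $(d)$: the identification $S = T_{\alpha = 0}$ must be carried out without invoking $(v)$ itself, which requires an independent verification that $\dom(S) \subseteq \dom(T_{max})$ by testing the defining identity for $S$ against compactly supported form-domain elements (for which no boundary behavior at $b$ enters). Once this is in hand, item $(vi)$ follows directly from \cite[Lemma 6.10.4]{BHS20}, whose infinitesimal form bound for $|\wti f(a)|^2$ depends only on the structure of $\mathfrak{Q}_{c,d}$ near the limit circle endpoint $a$ and is therefore insensitive to the choice of principal versus nonprincipal solution at $b$.
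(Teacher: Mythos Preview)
Your plan is essentially sound, but note that the paper itself does not prove Proposition~\ref{p4.2}: it simply defers to \cite[Lemma 6.9.4, Corollary 6.12.2, Lemma 6.12.3, Proof of Lemma 6.12.5]{BHS20}, where item~$(v)$ is obtained as part of verifying that $\{\bbC,\Lambda\}$ is a compatible boundary pair. Your route is therefore genuinely different and more elementary, replacing the abstract boundary-pair argument by a direct identification of the operator $S$ arising from the restricted form with $T_{\alpha=0}$ and then reading off $L(f,g)=0$ from the Jacobi identity.

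Two small repairs are needed. First, in step~$(d)$ you should test against $\dom\big(\dot T\big)$ (compactly supported elements of $\dom(T_{max})$) rather than $C_c^\infty((a,b))$: under Hypothesis~\ref{h2.1} one only has $p^{-1}\in L^1_{loc}$, so $p^{1/2}h'$ need not be locally square-integrable for smooth $h$, and hence $C_c^\infty$ need not lie in $\dom(\mathfrak Q_{c,d})$. The calculation $(h,Sg)=(\dot T h,g)$ for $h\in\dom\big(\dot T\big)$ then gives $g\in\dom\big((\dot T)^*\big)=\dom(T_{max})$ as desired. Second, your ordering is off: step~$(d)$ requires the restriction of $\mathfrak Q_{c,d}$ to $\{\wti h(a)=0\}$ to be \emph{closed} in order to invoke the First Representation Theorem, and closedness of that subspace in form topology is exactly what item~$(vi)$ provides. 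Since you correctly observe that $(vi)$ depends only on the structure near the limit-circle endpoint $a$ and is therefore independent of $(v)$, you should simply establish $(vi)$ before $(v)$. Alternatively, you can bypass this by working instead with the self-adjoint operator $S_1$ associated with the \emph{full} form $\mathfrak Q_{c,d}$: the same testing argument gives $T_{min}\subseteq S_1\subseteq T_{max}$, and taking $f=v_a$ in the Jacobi identity then forces $\wti g^{\,\prime}(a)=0$ for $g\in\dom(S_1)$, whence $S_1=T_{\pi/2}$ and $L(f,g)=0$ for $g\in\dom(T_{\pi/2})$; a decomposition $g=g_0+\wti g^{\,\prime}(a)\,w_a$ with $w_a$ supported near $a$ finishes the argument without ever invoking $(vi)$.
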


\begin{remark}
The properties of $\mathfrak{Q}_{c,d}$ summarized in Proposition \ref{p4.2} are discussed in detail in \cite{{BHS20}} (see \cite[Lemma 6.9.4, Corollary 6.12.2, Lemma 6.12.3, Proof of Lemma 6.12.5]{BHS20}) and \eqref{4.8vvv} is entirely analogous to Proposition \ref{p3.3}. For the connection with \cite[Sect.~ 6.12]{BHS20}, see Appendix \ref{{sA}}.    \hfill$\diamond$
\end{remark}

\begin{lemma}\lb{l4.4}
Assume Hypothesis \ref{h4.1}.  Let $a<a_0<b_0<b$ with $a_0$ and $b_0$ chosen so that \eqref{3.1} holds and suppose $c\in (a,a_0)$ and $d\in (b_0,b)$.  If $f\in \dom(\mathfrak{Q}_{c,d})$ and $g\in \dom(T_{max})$, then
\begin{align}
(f,T_{max}g)_{\Lr} = \mathfrak{Q}_{c,d}(f,g) + \overline{\wti f(a)}\wti g^{\,\prime}(a).\lb{4.8v}
\end{align}
\end{lemma}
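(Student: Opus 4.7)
The plan is to follow the same strategy as in Lemma \ref{l3.3c}, namely integration by parts via Jacobi's factorization identity applied on three pieces: a left tail $(a',c)$, the compact interior $[c,d]$, and a right tail $(d,b')$, then take the limits $a' \downarrow a$ and $b' \uparrow b$. The only structural change from the two-limit-circle proof is that near $b$ the nonprincipal solution $\widehat{u}_b(\lambda_0,\cdot)$ is replaced by the principal solution $u_b(\lambda_0,\cdot)$, which is still a nonvanishing solution of $\tau u = \lambda_0 u$ on $(d,b)$ by \eqref{3.1}; consequently Jacobi's identity is applicable with $h = u_b(\lambda_0,\cdot)$, yielding
\begin{equation*}
 -\big(g^{[1]}\big)' + \lambda_0 r\, g = -\frac{1}{u_b(\lambda_0,\cdot)}\Bigg[p\, u_b(\lambda_0,\cdot)^2\bigg(\frac{g}{u_b(\lambda_0,\cdot)}\bigg)'\Bigg]' \quad \text{a.e.\ on $(d,b)$},
\end{equation*}
in complete analogy with \eqref{3.9m}.

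Next, I would reproduce the chain of equalities \eqref{3.13c} verbatim, replacing $\widehat{u}_b(\lambda_0,\cdot)$ by $u_b(\lambda_0,\cdot)$ throughout the right tail and recognizing the resulting $N$-expressions as those in \eqref{4.1vv}. Integration by parts on $(d,b')$ and on $(a',c)$ produces two evaluation terms at the interior points $c$ and $d$, together with $g^{[1]}$ boundary values from the middle integral. The algebraic calculation at $c$ and $d$ is identical to \eqref{3.12c} (with $\widehat{u}_b$ replaced by $u_b$) and produces precisely the Wronskian-quotient terms $\widehat{u}_a^{[1]}(\lambda_0,c)/\widehat{u}_a(\lambda_0,c)\cdot \overline{f(c)}g(c)$ and $-u_b^{[1]}(\lambda_0,d)/u_b(\lambda_0,d)\cdot \overline{f(d)}g(d)$ appearing in \eqref{4.4}. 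After these cancellations, what remains are the two endpoint limits
\begin{equation*}
\lim_{a'\downarrow a} \frac{\overline{f(a')}}{\widehat{u}_a(\lambda_0,a')} W(\widehat{u}_a(\lambda_0,\cdot),g)(a') \quad \text{and} \quad -\lim_{b'\uparrow b} \frac{\overline{f(b')}}{u_b(\lambda_0,b')} W(u_b(\lambda_0,\cdot),g)(b').
\end{equation*}

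The first limit equals $\overline{\wti f(a)}\, \wti g^{\,\prime}(a)$ by exactly the argument in \eqref{3.14c}, invoking the extension of the boundary value $\wti f(a)$ to $\dom(\mathfrak{Q}_{c,d})$ from Proposition \ref{p4.2}(iv) and the definition of $\wti g^{\,\prime}(a)$ in \eqref{7.3.34B}. The second limit is precisely the quantity shown to vanish in Proposition \ref{p4.2}(v), equation \eqref{4.7v}; this is the key input that replaces the boundary-term production argument from the limit circle case at $b$. Combining these three observations produces \eqref{4.8v}.

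The main obstacle is not computational but rather conceptual: one must verify that the individual limits $a' \downarrow a$ and $b' \uparrow b$ in \eqref{3.13c} actually exist separately after the integration by parts (so that no indeterminate cancellations are hidden). Existence on the right tail is guaranteed by the vanishing in \eqref{4.7v} together with the Cauchy--Schwarz-type bound implicit in the fact that $N_{u_b(\lambda_0,\cdot),d}f \in L^2((d,b);dx)$ and the analogous condition on $g \in \dom(T_{max})$; on the left tail, existence follows just as in the proof of Lemma \ref{l3.3c}. Once this is justified, the identity \eqref{4.8v} is immediate.
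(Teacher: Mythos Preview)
Your proposal is correct and follows essentially the same approach as the paper's own proof: replace $\widehat{u}_b(\lambda_0,\cdot)$ by $u_b(\lambda_0,\cdot)$ in the right-tail calculations of Lemma~\ref{l3.3c}, identify the interior evaluation terms with the last line of \eqref{4.4}, keep \eqref{3.14c} at $a$, and invoke \eqref{4.7v} to kill the boundary contribution at $b$. Your additional remark on the separate existence of the $a'$ and $b'$ limits is a welcome clarification, but the paper does not make this explicit and simply cites the ingredients (4.7), (4.10v), (4.11v).
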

\begin{proof}
Repeating the calculations in \eqref{3.9m}--\eqref{3.13c} with $u_b(\lambda_0,\dott)$ in place of $\widehat{u}_b(\lambda_0,\dott)$, one obtains for $f\in \dom(\mathfrak{Q}_{c,d})$ and $g\in \dom(T_{max})$,
\begin{align}
&(f,T_{max}g)_{\Lr}\no\\
&\quad = \lim_{a'\downarrow a}\Bigg\{-\frac{\ol{f}}{\widehat{u}_a(\lambda_0,\dott)}\Bigg[p\widehat{u}_a(\lambda_0,\dott)^2\bigg(\frac{g}{\widehat{u}_a(\lambda_0,\dott)}\bigg)'\Bigg]\Bigg|_{a'}^c\no\\
&\hspace*{1.8cm} + \int_{a'}^cdx\, \bigg(\frac{\overline{f}}{\widehat{u}_a(\lambda_0,\dott)}\bigg)' p \widehat{u}_a(\lambda_0,\dott)^2\bigg(\frac{g}{\widehat{u}_a(\lambda_0,\dott)}\bigg)' \Bigg\}\no\\
&\qquad + \lambda_0 \int_a^c r\,dx\, \overline{f}g - \overline{f}g^{[1]}\big|_c^d + \int_c^d dx\, \big(p^{-1}\overline{f^{[1]}}g^{[1]} + q\overline{f}g\big) + \lambda_0 \int_d^b r\,dx\, \overline{f}g\no\\
&\qquad + \lim_{b'\uparrow b}\Bigg\{-\frac{\ol{f}}{u_b(\lambda_0,\dott)}\Bigg[pu_b(\lambda_0,\dott)^2\bigg(\frac{g}{u_b(\lambda_0,\dott)}\bigg)'\Bigg]\Bigg|_d^{b'}\no\\
&\hspace{2cm} + \int_d^{b'}dx\, \bigg(\frac{\overline{f}}{u_b(\lambda_0,\dott)}\bigg)' p u_b(\lambda_0,\dott)^2\bigg(\frac{g}{u_b(\lambda_0,\dott)}\bigg)' \Bigg\}.\lb{4.7}
\end{align}
In analogy with \eqref{3.12c}, the evaluation terms at $c$ and $d$ in \eqref{4.7} are
\begin{equation}\lb{4.10v}
\frac{\widehat{u}_a^{[1]}(\lambda_0,c)}{\widehat{u}_a(\lambda_0,c)}\ol{f(c)}g(c) - \frac{u_b^{[1]}(\lambda_0,d)}{u_b(\lambda_0,d)}\ol{f(d)}g(d).
\end{equation}
Moreover, \eqref{3.14c} remains valid.  However, in lieu of \eqref{3.15c}, one now obtains, as a consequence of \eqref{4.7v},
\begin{equation}\lb{4.11v}
\begin{split}
&\lim_{b'\uparrow b} \Bigg[\frac{\overline{f}}{u_b(\lambda_0,\dott)}pu_b(\lambda_0,\dott)^2\bigg(\frac{g}{u_b(\lambda_0,\dott)}\bigg)'\Bigg](b')\\
&\quad = \lim_{b'\uparrow b}\frac{\ol{f(b')}}{u_b(\lambda_0,b')}W(u_b(\lambda_0,\dott),g)(b') = 0.
\end{split}
\end{equation}
Hence, \eqref{4.8v} follows by combining \eqref{4.7}, \eqref{4.10v}, and \eqref{4.11v}.
\end{proof}

In the next theorem we provide the sesquilinear form corresponding to the self-adjoint extensions $T_{\alpha}$, $\alpha\in[0,\pi)$, of $T_{min}$ with a separated boundary condition from Proposition \ref{p4.1}\,$(ii)$.

\begin{theorem}\label{caselp}
Assume Hypothesis \ref{h4.1}.  Let $a<a_0<b_0<b$ with $a_0$ and $b_0$ chosen so that \eqref{3.1} holds and suppose $c\in (a,a_0)$ and $d\in (b_0,b)$.  Then the following statements $(i)$ and $(ii)$ hold:\\[1mm]
$(i)$  If $\alpha\in (0,\pi)$, then the sesquilinear form $\mathfrak{Q}_{c,d}^{\alpha}$ defined by
\begin{align}
\begin{split} 
\mathfrak{Q}_{c,d}^{\alpha}(f,g) = \mathfrak{Q}_{c,d}(f,g)-\cot(\alpha)\overline{\widetilde{f}(a)}\widetilde{g}(a),&   \\ 
f,g\in \dom\big(\mathfrak{Q}_{c,d}^{\alpha}\big)=\dom(\mathfrak{Q}_{c,d}),&    \lb{4.8}
\end{split} 
\end{align}
is densely defined, closed, symmetric, and lower semibounded.  In addition,
\begin{equation}\lb{4.9}
(f,T_{\alpha}g)_{\Lr} = \mathfrak{Q}_{c,d}^{\alpha}(f,g),\quad f\in \dom\big(\mathfrak{Q}_{c,d}^{\alpha}\big),\, g\in \dom(T_{\alpha}).
\end{equation}
Hence, $\mathfrak{Q}_{c,d}^{\alpha}$ is the unique densely defined, closed, symmetric, lower semibounded sesquilinear form uniquely associated to $T_{\alpha}$ by the First Representation Theorem.\\[1mm]
$(ii)$  If $\alpha=0$, then the sesquilinear form $\mathfrak{Q}_{c,d}^{0}$ defined by
\begin{align}
\mathfrak{Q}_{c,d}^{0}(f,g) = \mathfrak{Q}_{c,d}(f,g), \quad f,g\in \dom(\mathfrak{Q}_{c,d}^{0})= \big\{h\in \dom(\mathfrak{Q}_{c,d})\,\big|\, \wti h(a)=0\big\},\lb{4.10}
\end{align}
is densely defined, closed, symmetric, and lower semibounded.  In addition,
\begin{equation}\lb{4.11}
(f,T_0g)_{\Lr} = \mathfrak{Q}_{c,d}^{0}(f,g),\quad f\in \dom\big(\mathfrak{Q}_{c,d}^0\big),\, g\in \dom(T_0).
\end{equation}
Hence, $\mathfrak{Q}_{c,d}^0$ is the unique densely defined, closed, symmetric, lower semibounded sesquilinear form uniquely associated to $T_0$ by the First Representation Theorem.
\end{theorem}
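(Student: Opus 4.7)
The plan is to mirror the proof of Theorem \ref{case0}, capitalizing on two simplifications peculiar to the limit point/limit circle setting: the Green-type identity in Lemma \ref{l4.4} carries only a single boundary term (at $a$), and the infinitesimal form boundedness \eqref{4.8vvv} in Proposition \ref{p4.2}\,$(vi)$ involves only the generalized trace at $a$. Symmetry in both parts will be immediate by inspection, and uniqueness of the representing form will follow from the First Representation Theorem once the other properties are verified.

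For part $(i)$, the sesquilinear form $\mathfrak{Q}_{c,d}^{\alpha}$ differs from $\mathfrak{Q}_{c,d}$ by the perturbation $(f,g)\mapsto -\cot(\alpha)\overline{\widetilde f(a)}\widetilde g(a)$, which by \eqref{4.8vvv} (together with the polarization identity and the remark analogous to Remark \ref{r3.5c}) is infinitesimally form bounded with respect to $\mathfrak{Q}_{c,d}$. Since $\mathfrak{Q}_{c,d}$ is densely defined, closed, and lower semibounded by Proposition \ref{p4.2}\,$(i)$, the standard KLMN-type perturbation result transfers these properties to $\mathfrak{Q}_{c,d}^{\alpha}$. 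To verify \eqref{4.9}, for $g\in\dom(T_{\alpha})$ the boundary condition $\sin(\alpha)\widetilde g^{\,\prime}(a)+\cos(\alpha)\widetilde g(a)=0$ gives $\widetilde g^{\,\prime}(a)=-\cot(\alpha)\widetilde g(a)$; substituting this into the identity of Lemma \ref{l4.4} immediately yields $(f,T_{\alpha}g)_{\Lr}=\mathfrak{Q}_{c,d}^{\alpha}(f,g)$.

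For part $(ii)$, the inclusion $\dom(T_{min})\subseteq \dom(\mathfrak{Q}_{c,d}^{0})$ shows the form is densely defined, and lower semiboundedness holds since $\mathfrak{Q}_{c,d}^{0}$ is the restriction of the lower semibounded form $\mathfrak{Q}_{c,d}$ to the subspace $\{\widetilde h(a)=0\}$. Closedness is the one step requiring genuine care: given a Cauchy sequence $\{f_n\}\subset \dom(\mathfrak{Q}_{c,d}^{0})$ with $f_n\to f$ in $\Lr$ and $\mathfrak{Q}_{c,d}(f_n-f_m,f_n-f_m)\to 0$, closedness of $\mathfrak{Q}_{c,d}$ yields $f\in\dom(\mathfrak{Q}_{c,d})$ and $\mathfrak{Q}_{c,d}(f_n-f,f_n-f)\to 0$; applying \eqref{4.8vvv} to $f_n-f$ then gives $\widetilde f_n(a)\to \widetilde f(a)$, forcing $\widetilde f(a)=0$ and hence $f\in\dom(\mathfrak{Q}_{c,d}^{0})$. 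The representation \eqref{4.11} is then transparent: for $f\in\dom(\mathfrak{Q}_{c,d}^{0})$ and $g\in\dom(T_0)$ one has $\widetilde f(a)=\widetilde g(a)=0$, so the boundary contribution in Lemma \ref{l4.4} vanishes outright.

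The only substantive obstacle is the closedness argument in $(ii)$, where one must propagate the trace constraint $\widetilde h(a)=0$ through the limit; this is precisely the role of the infinitesimal form bound \eqref{4.8vvv}. All remaining considerations—density, symmetry, semiboundedness, and the First Representation Theorem application—are structurally simpler than in the two-endpoint limit circle case because the limit point condition at $b$ has already eliminated any boundary contribution there through \eqref{4.7v} in Proposition \ref{p4.2}\,$(v)$.
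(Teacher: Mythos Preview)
Your proposal is correct and follows essentially the same approach as the paper's proof: both parts use the infinitesimal form bound \eqref{4.8vvv} to handle closedness and lower semiboundedness, and both invoke Lemma~\ref{l4.4} together with the boundary condition at $a$ to obtain the representation identities. The only cosmetic difference is that the paper phrases the closedness argument in part $(ii)$ via the intermediate form $\mathfrak{Q}_{c,d}^{\pi/2}$, but since $\cot(\pi/2)=0$ this coincides with $\mathfrak{Q}_{c,d}$, so your direct use of $\mathfrak{Q}_{c,d}$ is the same argument.
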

\begin{proof}
$(i)$  That $\mathfrak{Q}_{c,d}^{\alpha}$ is closed and lower semibounded follows from the infinitesimal boundedness property summarized in \eqref{4.8vvv}.  It is clear by inspection that $\mathfrak{Q}_{c,d}^{\alpha}$ is symmetric, and $\dom(T_{min})\subseteq \dom\big(\mathfrak{Q}_{c,d}^{\alpha}\big)$ shows that $\mathfrak{Q}_{c,d}^{\alpha}$ is densely defined in $\Lr$.  If $f\in \dom\big(\mathfrak{Q}_{c,d}^{\alpha}\big)$ and $g\in \dom(T_{\alpha})$, then \eqref{4.8v} and the boundary condition $\wti g^{\,\prime}(a) = -\cot(\alpha)\wti g(a)$ yield:
\begin{equation}
\begin{split}
(f,T_{\alpha}g)_{\Lr} &= (f,T_{max}g)_{\Lr}\\
&= \mathfrak{Q}_{c,d}(f,g) - \cot(\alpha)\ol{\wti f(a)}\wti g(a) = \mathfrak{Q}_{c,d}^{\alpha}(f,g).
\end{split}
\end{equation}

\noindent
$(ii)$  One notes that $\mathfrak{Q}_{c,d}^0$ is densely defined since $\dom(T_{min})\subseteq \dom\big(\mathfrak{Q}_{c,d}^0\big)$, and $\mathfrak{Q}_{c,d}^0$ is lower semibounded since it is a restriction of $\mathfrak{Q}_{c,d}^{\pi/2}$, and the latter is lower semibounded by part $(i)$. To prove that $\mathfrak{Q}_{c,d}^0$ is closed, let $\{f_n\}_{n=1}^{\infty}\subset \dom\big(\mathfrak{Q}_{c,d}^0\big)$ be a sequence such that $\|f_n-f\|_{\Lr}\to 0$ for some $f\in \Lr$ and $\mathfrak{Q}_{c,d}^0(f_n-f_m,f_n-f_m)\to 0$.  Since $\mathfrak{Q}_{c,d}^0$ is a restriction of $\mathfrak{Q}_{c,d}^{\pi/2}$, and the latter is closed, it follows that $f\in \dom\big(\mathfrak{Q}_{c,d}^{\pi/2}\big)$ and $\mathfrak{Q}_{c,d}^{\pi/2}(f_n-f,f_n-f)\to 0$.  By \eqref{4.8vvv}, one obtains: For every $\varepsilon>0$, there exists $\widehat{C}(\varepsilon)>0$ such that
\begin{equation}\lb{3.23u}
\big|\wti g(a)\big|^2 \leq \varepsilon \mathfrak{Q}_{c,d}^{\pi/2}(g,g) + \widehat{C}(\varepsilon)\|g\|_{\Lr}^2,\quad g\in \dom\big(\mathfrak{Q}_{c,d}^{\pi/2}\big).
\end{equation}
In turn, \eqref{3.23u} with $\varepsilon = 1$ yields:
\begin{equation}\lb{3.24u}
\begin{split}
\big|\wti f(a)\big|^2 &= \big|\wti f_n(a)-\wti f(a)\big|^2\\
&\leq \mathfrak{Q}_{c,d}^{\pi/2}(f_n-f,f_n-f) + \widehat{C}(1)\|f_n-f\|_{\Lr}^2,\quad n\in \bbN.
\end{split}
\end{equation}
Taking $n\to \infty$ throughout \eqref{3.24u} yields $\wti f(a)=0$, thereby implying $f\in \dom\big(\mathfrak{Q}_{c,d}^0\big)$.  Using once more that $\mathfrak{Q}_{c,d}^0$ is a restriction of $\mathfrak{Q}_{c,d}^{\pi/2}$, it follows that $\mathfrak{Q}_{c,d}^0(f_n-f,f_n-f)\to 0$.  Hence, one concludes that $\mathfrak{Q}_{c,d}^0$ is closed.  Finally, \eqref{4.11} follows from \eqref{4.8v} and the boundary condition $\wti f(a)=0$.
\end{proof}

\section{Case Three: Two Limit Point Endpoints} \lb{s5}

In this final section we 
provide the sesquilinear form corresponding to the unique, lower semibounded, self-adjoint realization 
from Proposition \ref{p2.14}. We assume, in addition to Hypothesis \ref{h2.1}, that $\tau$ is in the limit point case at both endpoints of the interval $(a,b)$ and that $T_{min}\geq \lambda_0I_{\Lr}$ for some $\lambda_0\in \bbR$.  To be precise, we introduce the following hypothesis.

\begin{hypothesis}\lb{h5.1}
In addition to Hypothesis \ref{h2.1}, assume that $\tau$ is in the limit point case at both $a$ and $b$.  Suppose that $T_{min} \geq \lambda_0 I_{\Lr}$ for some $\lambda_0 \in \bbR$ and that $u_t(\lambda_0,\dott)$ and $\widehat u_t(\lambda_0,\dott)$ are principal and nonprincipal solutions of $\tau u = \lambda_0 u$ on $(a,b)$, respectively, at $t\in \{a,b\}$ that satisfy \eqref{7.3.33AB}.
\end{hypothesis}

Under Hypothesis \ref{h5.1}, the operator $T:=T_{min}=T_{max}$ is self-adjoint (equivalently, $\dot T$ is essentially self-adjoint) by Proposition \ref{p2.14}.  In particular, $T_{min}$ is self-adjoint and possesses no nontrivial self-adjoint extension.

Assuming Hypothesis \ref{h5.1}, choose $a_0,b_0\in (a,b)$ such that $a<a_0<b_0<b$ and \eqref{3.1} holds.  Let $c\in (a,a_0)$ and $d\in (b_0,b)$ be fixed.  Next, we formally replace the nonprincipal solutions $\widehat{u}_t(\lambda_0,\dott)$, $t\in\{a,b\}$, in Section \ref{s3} with the principal solutions $u_t(\lambda_0,\dott)$, $t\in\{a,b\}$.  More precisely, introducing the differential expressions $N_{u_b(\lambda_0,\dott),d}$ as in \eqref{4.1vv} and $N_{u_a(\lambda_0,\dott),c}$ by
\begin{equation}
N_{u_a(\lambda_0,\dott),c}g = p^{1/2}u_a(\lambda_0,\dott)\bigg(\frac{g}{u_a(\lambda_0,\dott)}\bigg)',\quad g\in AC_{loc}((a,c)),
\end{equation}
one defines the symmetric sesquilinear form $\mathfrak{Q}_{c,d}$ as follows:
\begin{align}
&\dom(\mathfrak{Q}_{c,d})=\big\{h\in \Lr\,\big|\, h\in AC_{loc}((a,b)),\no\\
&\hspace*{2.3cm} p^{-1/2}h^{[1]}\in L^2((c,d);dx),\, N_{u_a(\lambda_0,\dott),c}h\in L^2((a,c);dx),\lb{5.2}\\
&\hspace*{6.4cm} N_{u_b(\lambda_0,\dott),d}h\in L^2((d,b);dx)\big\},\no
\end{align}
and
\begin{align}
\mathfrak{Q}_{c,d}(f,g) &= \int_a^c dx\, \ol{(N_{u_a(\lambda_0,\dott),c}f)(x)} (N_{u_a(\lambda_0,\dott),c}g)(x)    \no \\
& \quad + \int_d^b dx\, \ol{(N_{u_b(\lambda_0,\dott),d}f)(x)} (N_{u_b(\lambda_0,\dott),d}g)(x)      \no\\
&\quad + \lambda_0 \int_a^c r(x) \,dx\, \ol{f(x)}g(x) + \lambda_0\int_d^b r(x) \,dx\, \ol{f(x)}g(x)    \no \\
& \quad + \int_c^d\, dx\, \Big[p(x)^{-1}\ol{f^{[1]}(x)} g^{[1]}(x) + q(x) \ol{f(x)}g(x)\Big]     \no\\
&\quad+ \frac{u_a^{[1]}(\lambda_0,c)}{u_a(\lambda_0,c)}\ol{f(c)}g(c) - \frac{u_b^{[1]}(\lambda_0,d)}{u_b(\lambda_0,d)}\ol{f(d)}g(d),\quad f,g\in \dom(\mathfrak{Q}_{c,d}).\lb{5.3}
\end{align}

Several important properties of the sesquilinear form $\mathfrak{Q}_{c,d}$ are collected in the following result.

\begin{proposition}\lb{p5.2}
Assume Hypothesis \ref{h5.1}.  Let $a<a_0<b_0<b$ with $a_0$ and $b_0$ chosen so that \eqref{3.1} holds and suppose $c\in (a,a_0)$ and $d\in (b_0,b)$.  Then the following statements $(i)$--\,$(iv)$ hold:\\[1mm]
$(i)$ The sesquilinear form $\mathfrak{Q}_{c,d}$ defined by \eqref{5.2} and \eqref{5.3} is densely defined, closed, and lower semibounded in $\Lr$.\\[1mm]
$(ii)$ $\dom(T_{max})\subseteq \dom(\mathfrak{Q}_{c,d})$.\\[1mm]
$(iii)$  If $c'\in (a,a_0)$ and $d'\in (b_0,b)$, then $\mathfrak{Q}_{c,d}=\mathfrak{Q}_{c',d'}$.  That is, the sesquilinear form defined by \eqref{5.2} and \eqref{5.3} is independent of the choices of $c\in (a,a_0)$ and $d\in (b_0,b)$.\\[1mm]
$(iv)$  If $f\in \dom(\mathfrak{Q}_{c,d})$ and $g\in \dom(T_{max})$, then
\begin{equation}\lb{5.4}
\lim_{a'\downarrow a} \frac{\ol{f(a')}}{u_a(\lambda_0,a')}W(u_a(\lambda_0,\dott),g)(a')=\lim_{b'\uparrow b} \frac{\ol{f(b')}}{u_b(\lambda_0,b')}W(u_b(\lambda_0,\dott),g)(b')=0.
\end{equation}
\end{proposition}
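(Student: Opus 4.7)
The plan is to establish (i)--(iv) by adapting the arguments of Sections \ref{s3} and \ref{s4} to the setting where principal solutions are used at \emph{both} limit-point endpoints. The structural results (i)--(iii) follow from Jacobi factorization applied symmetrically near $a$ and $b$, combined with the abstract boundary-pair framework of \cite[Sect.~6.13]{BHS20}, as invoked in Appendix \ref{{sA}}. The new content is part (iv): in the absence of any genuine boundary parameter, it expresses the fact that $\mathfrak{Q}_{c,d}$ correctly reproduces $T = T_{max}$ via the First Representation Theorem.

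For parts (i)--(iii), I would proceed in parallel with Proposition \ref{p4.2}. Part (i): closedness and lower semiboundedness of $\mathfrak{Q}_{c,d}$ follow by exhibiting the form, through Jacobi factorization, as a sum of two nonnegative $L^2$-integrals on $(a,c)$ and $(d,b)$, plus a regular-interval form on $(c,d)$, plus explicit bounded boundary evaluations at $c$ and $d$; density holds since $\dom(\dot T) \subseteq \dom(\mathfrak{Q}_{c,d})$. Part (ii): for $g \in \dom(T_{max})$ one rewrites $\int_{a'}^{c} |N_{u_a(\lambda_0,\dott),c}g|^2 \, dx$ (and its analog near $b$) via the Jacobi identity in reverse, obtaining an expression involving $\int_{a'}^{c} r \, \ol{g}(\tau g - \lambda_0 g) \, dx$ together with boundary contributions at $a'$ and $c$; the former admits a finite limit as $a' \downarrow a$ because $r^{1/2} g, \, r^{1/2}(\tau g - \lambda_0 g) \in L^2((a,b);dx)$, and the principal-solution choice forces the boundary contribution at $a'$ to admit a finite limit (which part (iv) then identifies as zero). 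Part (iii) is a direct calculation: moving the cutoff from $c$ to $c'$ changes the integrals on $(a,c)$ versus $(a,c')$ only by contributions that telescope against the explicit boundary evaluation at $c$, and analogously at the right endpoint.

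The proof of (iv) begins with an integration-by-parts identity, obtained by repeating the computation of Lemma \ref{l4.4} with $u_b(\lambda_0,\dott)$ in place of $\hatt u_b(\lambda_0,\dott)$ and $u_a(\lambda_0,\dott)$ in place of $\hatt u_a(\lambda_0,\dott)$. Near the endpoint $b$ one obtains, for $f \in \dom(\mathfrak{Q}_{c,d})$ and $g \in \dom(T_{max})$,
\begin{align*}
\frac{\ol{f(b')}}{u_b(\lambda_0,b')} W(u_b(\lambda_0,\dott),g)(b')
&= \int_d^{b'} \ol{(N_{u_b(\lambda_0,\dott),d} f)(x)} \, (N_{u_b(\lambda_0,\dott),d} g)(x) \, dx \\
&\quad - \int_d^{b'} r(x) \ol{f(x)} [(\tau g)(x) - \lambda_0 g(x)] \, dx + C_d,
\end{align*}
where $C_d = \ol{f(d)} W(u_b(\lambda_0,\dott),g)(d) / u_b(\lambda_0,d)$ is independent of $b'$. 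By the Cauchy--Schwarz inequality and the inclusions $N_{u_b(\lambda_0,\dott),d} f, \, N_{u_b(\lambda_0,\dott),d} g \in L^2((d,b);dx)$ together with $r^{1/2} f, \, r^{1/2}(\tau g - \lambda_0 g) \in L^2((a,b);dx)$, both integrals on the right converge absolutely as $b' \uparrow b$, so the limit on the left exists.

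The main obstacle is to show that this limit equals zero. For $g \in \dom(\dot T)$, which is compactly supported in $(a,b)$, the limit vanishes trivially since $W(u_b(\lambda_0,\dott),g)(b') = 0$ for $b'$ sufficiently close to $b$. For general $g \in \dom(T_{max})$, since $\dom(\dot T)$ is a core for $T_{min} = T_{max}$ in the graph norm, I would exhibit the linear functional $g \mapsto L_b(f,g)$ as continuous on $\dom(T_{max})$ in this graph norm, using the integral representation above together with the bounded inclusion $\dom(T_{max}) \hookrightarrow \dom(\mathfrak{Q}_{c,d})$ furnished by the abstract framework of Appendix \ref{{sA}}; density then forces $L_b \equiv 0$. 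A symmetric argument at $a$ completes the proof. The underlying principle is that the principal solution at a limit-point endpoint encodes the automatic self-adjoint boundary condition, so no genuine boundary form survives the integration by parts.
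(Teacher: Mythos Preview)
Your approach is sound and, in fact, more detailed than the paper's own treatment: the paper disposes of Proposition~\ref{p5.2} in a one-line remark stating that items (i)--(iv) are ``entirely analogous'' to Proposition~\ref{p4.2}, which in turn is referenced to \cite[Lemma~6.9.4, Corollary~6.12.2, Lemma~6.12.3, Proof of Lemma~6.12.5]{BHS20}. Your sketch for (i)--(iii) via Jacobi factorization mirrors exactly what underlies those references, and your density argument for (iv) is a correct and natural way to carry out the analog of \cite[Proof of Lemma~6.12.5]{BHS20} at a limit-point endpoint.

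One small repair is needed in your justification of (iv). You invoke ``the bounded inclusion $\dom(T_{max}) \hookrightarrow \dom(\mathfrak{Q}_{c,d})$ furnished by the abstract framework of Appendix~\ref{{sA}}'', but Appendix~\ref{{sA}} treats the boundary-triplet/boundary-pair machinery for defect $(n,n)$ with $n\geq 1$, which does not apply in the limit-point/limit-point case where $T_{min}=T_{max}$ has defect $(0,0)$. The bounded inclusion you need follows instead directly from (i) and (ii) by the closed graph theorem: $\dom(T_{max})$ with the graph norm and $\dom(\mathfrak{Q}_{c,d})$ with the form norm are both Hilbert spaces, the set-theoretic inclusion from (ii) is closed (since convergence in either norm implies convergence in $\Lr$), hence bounded. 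With this correction your continuity argument for $g \mapsto L_b(f,g)$ goes through, and the density of $\dom(\dot T)$ in $\dom(T_{max})=\dom(T_{min})$ in the graph norm finishes (iv) as you describe. The citation ``\cite[Sect.~6.13]{BHS20}'' also appears to be off by one; the relevant material in \cite{BHS20} is in Sections~6.9--6.12.
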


\begin{remark}
The proofs of items $(i)$--\,$(iv)$ in Proposition \ref{p5.2} are entirely analogous to those of the corresponding facts in Proposition \ref{p4.2}. \hfill$\diamond$
\end{remark}

\begin{theorem}\label{caselplp}
Assume Hypothesis \ref{h5.1}.  Let $a<a_0<b_0<b$ with $a_0$ and $b_0$ chosen so that \eqref{3.1} holds and suppose $c\in (a,a_0)$ and $d\in (b_0,b)$.  If $T:=T_{min}=T_{max}$, then
\begin{align}\lb{5.5}
(f,Tg)_{\Lr} = \mathfrak{Q}_{c,d}(f,g),\quad f\in \dom(\mathfrak{Q}_{c,d}),\, g\in \dom(T).
\end{align}
Hence, $\mathfrak{Q}_{c,d}$ is the unique densely defined, closed, symmetric, lower semibounded sesquilinear form uniquely associated to $T$ by the First Representation Theorem.
\end{theorem}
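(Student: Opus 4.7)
The plan is to follow the template of Theorem \ref{caselp} in the preceding section. The main step is to establish the integration-by-parts identity
\begin{equation*}
(f, T_{max} g)_{\Lr} = \mathfrak{Q}_{c,d}(f,g), \qquad f \in \dom(\mathfrak{Q}_{c,d}),\ g \in \dom(T_{max}),
\end{equation*}
which, together with $T = T_{min} = T_{max}$, immediately yields \eqref{5.5}.

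To prove this identity I would repeat the computation in the proof of Lemma \ref{l3.3c} almost verbatim, now using $u_a(\lambda_0,\cdot)$ in place of $\widehat u_a(\lambda_0,\cdot)$ on $(a,c)$ and $u_b(\lambda_0,\cdot)$ in place of $\widehat u_b(\lambda_0,\cdot)$ on $(d,b)$. Jacobi's factorization identity applies to any nonvanishing solution of $\tau u = \lambda_0 u$, so on $(a,a_0)$ and on $(b_0,b)$ the principal solutions are legitimate choices for the function $h$. After integration by parts on $(a',c)$, $(c,d)$, and $(d,b')$, and collecting the resulting $N$-integrals, the weighted $L^2$-integrals, the interior term involving $p^{-1}\overline{f^{[1]}}g^{[1]} + q\overline{f}g$, and the evaluation terms at $c$ and $d$ in the manner of \eqref{3.12c} (now with $u_t$ in place of $\widehat u_t$), one exactly reconstructs $\mathfrak{Q}_{c,d}(f,g)$ as defined in \eqref{5.3}, plus boundary contributions
\begin{equation*}
-\lim_{a'\downarrow a}\frac{\overline{f(a')}}{u_a(\lambda_0,a')}W(u_a(\lambda_0,\cdot),g)(a'), \qquad \lim_{b'\uparrow b}\frac{\overline{f(b')}}{u_b(\lambda_0,b')}W(u_b(\lambda_0,\cdot),g)(b')
\end{equation*}
at the two endpoints.

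The key new ingredient, and the feature that distinguishes the present case from Cases One and Two, is that Proposition \ref{p5.2}(iv) asserts that both of these endpoint limits vanish for $f \in \dom(\mathfrak{Q}_{c,d})$ and $g \in \dom(T_{max})$. This is precisely the limit-point analog, applied simultaneously at both ends, of the single-endpoint vanishing \eqref{4.7v} used in Lemma \ref{l4.4}; the absence of surviving boundary contributions is of course consistent with the fact that in Proposition \ref{p2.14} there are no free boundary conditions to parametrize. Once $(f, T_{max} g)_{\Lr} = \mathfrak{Q}_{c,d}(f,g)$ is established, \eqref{5.5} follows since $T_{max} = T$. For the uniqueness claim, Proposition \ref{p5.2}(i) together with inspection of \eqref{5.3} shows that $\mathfrak{Q}_{c,d}$ is densely defined, closed, symmetric, and lower semibounded; the First Representation Theorem thus associates to $\mathfrak{Q}_{c,d}$ a unique self-adjoint lower semibounded operator $\widetilde T$, and \eqref{5.5} forces $T \subseteq \widetilde T$. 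Since $T$ is already self-adjoint it admits no proper self-adjoint extension, so $\widetilde T = T$, and $\mathfrak{Q}_{c,d}$ is the unique form associated with $T$. The main technical hurdle is the integration-by-parts computation, but because Proposition \ref{p5.2}(iv) is cited as established and precisely removes the surviving endpoint limits, the proof is essentially a direct transcription of the calculations in Lemmas \ref{l3.3c} and \ref{l4.4}.
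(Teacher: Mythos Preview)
Your proposal is correct and follows essentially the same approach as the paper's proof: repeat the integration-by-parts computation of Lemmas \ref{l3.3c} and \ref{l4.4} with the principal solutions $u_a(\lambda_0,\cdot)$ and $u_b(\lambda_0,\cdot)$ in place of the nonprincipal ones, and then invoke Proposition \ref{p5.2}\,$(iv)$ (the paper's \eqref{5.4}) to kill both endpoint contributions. The only cosmetic remark is that the signs on your displayed boundary terms appear to be reversed (the contribution at $a$ enters with a plus and at $b$ with a minus, cf.\ \eqref{3.14c}--\eqref{3.15c}), but since both limits vanish this is immaterial to the argument.
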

\begin{proof}
Repeating the calculations in \eqref{4.8} with $u_a(\lambda_0,\dott)$ in place of $\widehat{u}_a(\lambda_0,\dott)$, one obtains for $f\in \dom(\mathfrak{Q}_{c,d})$ and $g\in \dom(T)$,
\begin{align}
&(f,T_{max}g)_{\Lr}\no\\
&\quad = \lim_{a'\downarrow a}\Bigg\{-\frac{\ol{f}}{u_a(\lambda_0,\dott)}\Bigg[pu_a(\lambda_0,\dott)^2\bigg(\frac{g}{u_a(\lambda_0,\dott)}\bigg)'\Bigg]\Bigg|_{a'}^c\no\\
&\hspace*{1.8cm} + \int_{a'}^cdx\, \bigg(\frac{\overline{f}}{u_a(\lambda_0,\dott)}\bigg)' p u_a(\lambda_0,\dott)^2\bigg(\frac{g}{u_a(\lambda_0,\dott)}\bigg)' \Bigg\}\no\\
&\qquad + \lambda_0 \int_a^c r\,dx\, \overline{f}g - \overline{f}g^{[1]}\big|_c^d + \int_c^d dx\, \big(p^{-1}\overline{f^{[1]}}g^{[1]} + q\overline{f}g\big) + \lambda_0 \int_d^b r\,dx\, \overline{f}g\no\\
&\qquad + \lim_{b'\uparrow b}\Bigg\{-\frac{\ol{f}}{u_b(\lambda_0,\dott)}\Bigg[pu_b(\lambda_0,\dott)^2\bigg(\frac{g}{u_b(\lambda_0,\dott)}\bigg)'\Bigg]\Bigg|_d^{b'}\no\\
&\hspace{2cm} + \int_d^{b'}dx\, \bigg(\frac{\overline{f}}{u_b(\lambda_0,\dott)}\bigg)' p u_b(\lambda_0,\dott)^2\bigg(\frac{g}{u_b(\lambda_0,\dott)}\bigg)' \Bigg\}.\lb{5.6}
\end{align}
In analogy with \eqref{4.10v}, the evaluation terms at $c$ and $d$ in \eqref{5.6} are
\begin{equation}\lb{5.7}
\frac{u_a^{[1]}(\lambda_0,c)}{u_a(\lambda_0,c)}\ol{f(c)}g(c) - \frac{u_b^{[1]}(\lambda_0,d)}{u_b(\lambda_0,d)}\ol{f(d)}g(d).
\end{equation}
Moreover, \eqref{4.11v} remains valid.  In addition, as a consequence of \eqref{5.4},
\begin{equation}\lb{5.8}
\begin{split}
&\lim_{a'\downarrow a} \Bigg[\frac{\overline{f}}{u_a(\lambda_0,\dott)}pu_a(\lambda_0,\dott)^2\bigg(\frac{g}{u_a(\lambda_0,\dott)}\bigg)'\Bigg](a')\\
&\quad = \lim_{a'\downarrow a}\frac{\ol{f(a')}}{u_a(\lambda_0,a')}W(u_a(\lambda_0,\dott),g)(a') = 0.
\end{split}
\end{equation}
Hence, \eqref{5.5} follows by combining \eqref{5.6}, \eqref{5.7}, and \eqref{5.8}.
\end{proof}

\appendix
\section{Approach via Boundary Triplets and Boundary Pairs} \label{{sA}}

In this appendix we briefly provide the background of the results in Section \ref{s3} and Section \ref{s4}
of this paper in terms of the boundary triplets and boundary pairs 
following the extensive 
treatment in \cite[Chs.~2, 5, 6]{BHS20}. 
By means of boundary pairs one can systematically treat the semibounded forms
that are associated with the lower semibounded self-adjoint  extensions of lower semibounded
symmetric operators. 
In this paper inner products and sesquilinear forms are linear 
in the second entry and anti-linear in the first entry; 
in the references to \cite{BHS20} one should be aware of the present convention.
Thus, when a sesquilinear form $\st$ in a Hilbert space $\sH$ 
is densely defined, closed, and lower semibounded, then there exists a unique 
self-adjoint operator $H$ in $\sH$, such that
\[
 \st[f,g]=(f, g)_\sH=(f, Hg)_\sH, \quad f \in \dom (\st), \,\, g \in \dom (H) \subset \dom (\st),
\]
by the First Representation Theorem. The notation $\st=\st_H$ is used to indicate the connection with $H$.

\medskip
\noindent
\textbf{Boundary Triplets.}
Let $S$ be a closed densely defined symmetric operator in a Hilbert space $\sH$
and assume that the defect numbers of $S$ are equal to $(n,n)$, $n\in\N$.   
A triplet $\{\bbC^n,\Gamma_0,\Gamma_1\}$ is called
a {\it boundary triplet} for $S^*$ if
the linear mappings $\Gamma_0,\Gamma_1:\dom (S^*)\rightarrow\bbC^n$
satisfy the abstract Green identity,
\begin{equation*}
 (f, S^*g)_\sH-(S^*f,g)_\sH= (\Gamma_0 f, \Gamma_1 g)_{\bbC^n}-
  (\Gamma_1 f , \Gamma_0 g)_{\bbC^n},
   \quad f,g\in\dom (S^*),
\end{equation*}
and $(\Gamma_0,\Gamma_1)^\top:\dom (S^*)\rightarrow\bbC^{2n}$ is onto,
see \cite[Definition 2.1.1]{BHS20}.
If $\{\bbC^n,\Gamma_0,\Gamma_1\}$ is a boundary triplet for $S^*$, then
one has
\begin{equation*} 
 \dom (S)=\{ g \in \dom (S^*) \,|\, \Gamma_0 g=\Gamma_1 g=0 \}
 \end{equation*}
and the mapping $(\Gamma_0,\Gamma_1)^\top:\dom (S^*)\rightarrow\bbC^{2n}$
is continuous if $\dom (S^*)$ is equipped with the graph norm.
The self-adjoint  extensions $A_\Theta$ of $S$ are parametrized
over the self-adjoint relations (multi-valued operators) $\Theta$ in $\bbC^n$ via  
\begin{equation}\label{bt000}
A_\Theta g= S^* g, \quad
g\in \dom  (A_\Theta)
=\big\{ h \in \dom (S^*) \,|\,  \{\Gamma_0 h, \Gamma_1 h \} \in \Theta \big\},
\end{equation}
see \cite[Theorem 2.1.3]{BHS20}.  
We note that if $\Theta$ is a self-adjoint relation in $\bbC^n$, then
$\dom (\Theta)=(\mul (\Theta))^\perp$ and one has the decomposition
$\bbC^n=\dom (\Theta) \oplus \mul (\Theta)$. In this context we recall that the multi-valued part $\mul (\Theta)$ is given by $\{ h \in \bbC^n\,|\, \{0,h\} \in \Theta \}$. Let $P$ be the orthogonal projection
onto $\dom (\Theta)$ and define the orthogonal operator part $\Theta_{\rm op}=P \Theta$.
Then there is the componentwise orthogonal decomposition
\begin{equation}\label{thetaop}
 \Theta =\Theta_{\rm op} \, \widehat \oplus \, (\{0\} \times \mul (\Theta)),
\end{equation}
where $\Theta_{\rm op}$ is a self-adjoint operator in $\dom (\Theta)$
and the second summand in the  right-hand side is a purely multi-valued self-adjoint relation in $\mul (\Theta)$.

\medskip
\noindent
\textbf{Boundary Pairs.} 
Assume in addition that the closed densely defined symmetric operator $S$ with defect numbers $(n,n)$
 is lower semibounded.  
In this case all self-adjoint extensions of $S$ are lower semibounded.
Recall that the form $\sss[f,g]=(f,Sg)$, $f,g \in \dom (S)$, is closable and that the Friedrichs extension $S_{\rm F}$ of $S$
is the unique self-adjoint operator that is associated with the closure $\overline{\sss}\,(=\st_{S_{\rm F}})$
 via the First Representation Theorem.
Moreover, let $S_1$ be a self-adjoint extension of $S$ which satisfies
\begin{equation}\label{mark}
\dom (S^*) \subseteq \dom (\st_{S_1}), 
\end{equation}
where $\st_{S_1}$ is the closed semibounded form associated with $S_1$ via the First Representation Theorem.
The condition \eqref{mark} is equivalent to
\begin{equation}\label{formdomS1}
 \dom (\st_{S_1})= \ker (S^*-c I_{\sH})  \dot{+} \dom (\st_{S_{\rm F}}), \quad \textrm{a direct sum},
\end{equation}
where $c$ is below the lower bound of $S_1$, and due to finite defect,
\eqref{mark} is also equivalent to the simple condition
\begin{equation*}
 \dom (S)=\dom (S_{\rm F}) \cap \dom (S_1),
\end{equation*}
see \cite[Theorem 5.3.8]{BHS20}.
The next lemma involves the notion of a boundary pair for $S$ with finite defect numbers;
see \cite[Lemma~5.6.5]{BHS20} for the general case.

\begin{lemma}\label{bdpairlemma}
Let $\{\bbC^n, \Gamma_0, \Gamma_1\}$ be an arbitrary
boundary triplet for $S^*$ and let $S_1$ be a self-adjoint extension of $S$
which satisfies \eqref{mark}. Let $\Lambda : \dom (\st_{S_1}) \to \bbC^n$
be a linear mapping which is bounded when
$\dom (\st_{S_1})$ is provided with the inner product associated with $\st_{S_1}-c$,
where $c$ is below the lower bound of $S_1$.
If $\Lambda$ extends $\Gamma_0$, then the self-adjoint extension $S_0$, $\dom (S_0)=\ker (\Gamma_0)$,
coincides with the Friedrichs extension $S_{\rm F}$ and the following equalities hold:
\begin{equation*}
  \ker (\Lambda)=\dom (\st_{S_{\rm F}}) \, \text{ and } \, \ran (\Lambda) =\bbC^n.
\end{equation*}
\end{lemma}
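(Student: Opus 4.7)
The plan is to first establish the form-domain equality $\ker(\Lambda) = \dom(\st_{S_{\rm F}})$, then read off both the operator identity $S_0 = S_{\rm F}$ and the surjectivity $\ran(\Lambda) = \bbC^n$ from it. The key ingredients I would invoke are: (a) $\dom(S)$ is a form core for $\st_{S_{\rm F}} = \overline{\sss}$; (b) the inclusion $\dom(\st_{S_{\rm F}}) \hookrightarrow \dom(\st_{S_1})$ is continuous with respect to the two form norms (since $\dom(\st_{S_{\rm F}}) \subseteq \dom(\st_{S_1})$ as the Friedrichs form has the smallest domain among the form extensions of $\sss$, both spaces are Hilbert spaces continuously embedded in $\sH$, and the closed graph theorem applies); and (c) the standard Krein-type bijection $\Gamma_0|_{\ker(S^* - z)} : \ker(S^* - z) \to \bbC^n$ valid for every $z$ in the resolvent set of $S_0$.

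For the inclusion $\dom(\st_{S_{\rm F}}) \subseteq \ker(\Lambda)$, I would take $h \in \dom(\st_{S_{\rm F}})$, approximate it in the $\st_{S_{\rm F}}$-form norm by $h_n \in \dom(S)$, and use (b) to promote the approximation to the $\st_{S_1}$-form norm. Since $\dom(S) \subseteq \ker(\Gamma_0)$ from the definition of the boundary triplet and $\Lambda$ extends $\Gamma_0$, one has $\Lambda h_n = 0$ for all $n$, so the boundedness of $\Lambda$ yields $\Lambda h = 0$ in the limit. For the reverse inclusion $\ker(\Lambda) \subseteq \dom(\st_{S_{\rm F}})$, I would decompose $h \in \ker(\Lambda)$ via \eqref{formdomS1} as $h = f_c + h_0$ with $f_c \in \ker(S^* - c)$ and $h_0 \in \dom(\st_{S_{\rm F}})$; by the inclusion just proved $\Lambda h_0 = 0$, while $\Lambda f_c = \Gamma_0 f_c$ (because $f_c \in \dom(S^*)$), so $\Gamma_0 f_c = 0$, and (c) forces $f_c = 0$. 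Therefore $h = h_0 \in \dom(\st_{S_{\rm F}})$.

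The surjectivity $\ran(\Lambda) = \bbC^n$ is immediate once $\Lambda$ extends the already-surjective $\Gamma_0 : \dom(S^*) \to \bbC^n$. The identity $S_0 = S_{\rm F}$ then falls out of the form-domain equality: every $g \in \dom(S_0) = \ker(\Gamma_0)$ satisfies $\Lambda g = \Gamma_0 g = 0$, hence $\dom(S_0) \subseteq \ker(\Lambda) = \dom(\st_{S_{\rm F}})$; combined with $\dom(S_0) \subseteq \dom(S^*)$ and the Friedrichs characterization $\dom(S_{\rm F}) = \dom(S^*) \cap \dom(\st_{S_{\rm F}})$, this gives $S_0 \subseteq S_{\rm F}$, and two self-adjoint extensions can be nested only when they coincide, so $S_0 = S_{\rm F}$.

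The main technical subtlety is the use of the Krein bijection in the reverse inclusion: (c) needs $c$ in the resolvent set of $S_0$, while the hypothesis only places $c$ below the lower bound of $S_1$. The resolution is that the form norms on $\dom(\st_{S_1})$ obtained from different admissible values of $c$ are all equivalent, so boundedness of $\Lambda$ is stable under replacing $c$ by any smaller constant; since $S_0$ is automatically lower semibounded by the finite-defect assumption, one may thus take $c$ from the outset below both the lower bound of $S_0$ and the lower bound of $S_1$, placing $c$ in the resolvent set of $S_0$ and removing the apparent circularity.
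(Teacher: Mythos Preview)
Your proof is correct and follows essentially the same skeleton as the paper's: both obtain $\ran(\Lambda)=\bbC^n$ directly from $\Lambda\supseteq\Gamma_0$, both get $\dom(\st_{S_{\rm F}})\subseteq\ker(\Lambda)$ by approximating in $\dom(S)$ and passing to the limit via boundedness of $\Lambda$, and both conclude $S_0=S_{\rm F}$ from $\dom(S_0)\subseteq\dom(\st_{S_{\rm F}})$.

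The one genuine technical difference is in the reverse inclusion $\ker(\Lambda)\subseteq\dom(\st_{S_{\rm F}})$. You decompose $h=f_c+h_0$ via \eqref{formdomS1}, deduce $\Gamma_0 f_c=0$, and then invoke the Krein bijection $\Gamma_0|_{\ker(S^*-c)}:\ker(S^*-c)\to\bbC^n$ to force $f_c=0$; this requires $c\in\rho(S_0)$, which you handle by adjusting $c$ downward. The paper instead argues by pure dimension counting: since $\dom(\st_{S_{\rm F}})$ has codimension $n$ in $\dom(\st_{S_1})$ (the sum in \eqref{formdomS1} being direct with an $n$-dimensional complement) and $\Lambda$ is already known to vanish on $\dom(\st_{S_{\rm F}})$ and to be onto $\bbC^n$, the induced map on the $n$-dimensional quotient is a bijection, so $\ker(\Lambda)=\dom(\st_{S_{\rm F}})$. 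The paper's route is slightly cleaner in that it never needs to know anything about $\rho(S_0)$ or adjust $c$, while your route makes the role of the defect space more explicit; both are perfectly valid.
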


\begin{proof}
Since $\Lambda$ extends $\Gamma_0$, one concludes that 
$\ran (\Lambda)=\ran (\Gamma_0)=\bbC^n$
and also $\dom (S_0)=\ker (\Gamma_0) \subseteq \ker (\Lambda)$.
In particular, $\dom (S) \subseteq \ker (\Lambda)$ and hence by continuity of $\Lambda$ and
the definition of the Friedrichs extension $S_{\rm F}$ 
one concludes that $\dom (\st_{S_{\rm F}}) \subseteq \ker (\Lambda)$.
On the other hand, since the sum in \eqref{formdomS1} is direct
and $\dim (\ker (S^*-c I_{\sH})) = n < \infty$ it follows that $\ker (\Lambda)=\dom (\st_{S_{\rm F}})$
and that $\Lambda$ maps $\ker (S^*-c I_{\sH})$ bijectively onto $\bbC^n$.
Combining this with the stated inclusion 
$\dom (S_0) \subseteq \ker (\Lambda)$ gives $\dom (S_0) \subseteq \dom (\st_{S_{\rm F}})$.
This implies that $S_0=S_{\rm F}$ by \cite[Theorem~5.3.3]{BHS20}.
\end{proof}

\medskip

The pair $\{\bbC^n, \Lambda\}$, where $\Lambda : \dom (\st_{S_1}) \to \bbC^n$ is bounded in the form topology
on $\st_{S_1}$ is called a \textit{boundary pair} for $S$ if $\ker (\Lambda) = \dom (\st_{S_{\rm F}})$,
see \cite[Definition 5.6.1]{BHS20}.
If, in addition, $\dom (S_1) = \ker (\Gamma_1)$,
then $\{\bbC^n, \Gamma_0, \Gamma_1\}$ and $\{\bbC^n, \Lambda\}$
are \textit{compatible} corresponding to $S_1$,
see \cite[Definition 5.6.4]{BHS20}, and the identity
\begin{equation*}
  (f,S^* g)_{\sH} = \st_{S_1}[f,g]+(\Lambda f, \Gamma_1 g)_{\bbC^n},
\quad f \in \dom (\st_{S_1}), \,\, g \in \dom (S^*), 
\end{equation*}
holds, see \cite[Corollary 5.6.7]{BHS20}.
Hence, Lemma~\ref{bdpairlemma} offers general sufficient conditions needed to construct
a compatible boundary pair $\{\bbC^n, \Lambda\}$ for $S$ corresponding to $S_1$.
Boundary pairs offer a general tool to describe forms generated by semibounded self-adjoint extensions
of lower semibounded symmetric operators via boundary conditions.

\medskip

Now let $\{\bbC^n, \Lambda\}$ be a compatible boundary pair corresponding to $S_1$.
Then the closed semibounded form $\st_\Theta$ associated with the self-adjoint extension $A_\Theta$
can be expressed in terms of the form $\st_{S_1}$ and the boundary pair $\{\bbC^n, \Lambda\}$ as follows
\begin{align}\label{ttt1}
\begin{split}
&  \st_\Theta[f,g]= \st_{S_1} [f,g]+(\Lambda f, \Theta_{\rm op}  \Lambda g)_{\bbC^n},  \\[1mm]
& f,g \in  \dom (\st_\Theta)=\{ h \in \dom (\st_{S_1}) \,|\, \Lambda h \in \dom (\Theta_{\rm op}) \},
\end{split}
\end{align}
see \cite[Corollary 5.6.14]{BHS20}. Hence, if $\Theta$ is a matrix, then \eqref{ttt1} reads
\begin{equation}\label{ttt1a}
\st_\Theta[f,g]= \st_{S_1} [f,g]+(\Lambda f, \Theta \Lambda g)_{\bbC^n}, 
 \quad   f,g \in  \dom (\st_\Theta)=\dom (\st_{S_1}).
\end{equation}
Moreover, if $\mul (\Theta)=\bbC^n$, then
\begin{equation*}
\st_\Theta \subseteq \st_{S_1}, \quad
\dom (\st_\Theta)=\{ h \in \dom (\st_{S_1}) \,|\, \Lambda h =0 \},
\end{equation*}
which corresponds to the Friedrichs extension.  In particular, for the case
$n=1$ one has $\Theta \in \bbR \cup \{\infty\}$.
One notes that for $\Theta \in \bbR$ the decomposition reads   
 \begin{equation*} 
 \st_\Theta[f,g]= \st_{S_1}[f,g] + (\Lambda f, \Theta \Lambda g)_{\bbC}, 
\quad f,g \in \dom (\st_\Theta)=\dom (\st_{S_1}),
\end{equation*}
while for $\Theta=\infty$ one has
\begin{equation*}
\st_\Theta \subseteq \st_{S_1}, \quad
\dom (\st_\Theta)=\{ h \in \dom (\st_{S_1}) \,|\, \Lambda h=0\}.
\end{equation*}

\medskip
\noindent
\textbf{Self-adjoint Linear Relations in $\bbC^n$.}
The structure of the self-adjoint extensions in \eqref{bt000} is clarified next.
It follows from \cite[Theorem 1.10.5, Corollary 1.10.8, Proposition 1.10.3]{BHS20} 
that any self-adjoint relation $\Theta$ in $\bbC^n$ can be expressed as
\begin{equation}\label{tes2}
\Theta=\left\{ \{{\bf u}, {\bf v}\} \in \bbC^n \times \bbC^n\,|\,  \cB \bf u = \cA \bf v \right\},
\end{equation}
where the $n \times n$ matrices $\cA$ and $\cB$ satisfy
\begin{equation}\label{tes1}
  \cA\cB^*=\cB \cA^*, \quad \rank (\cB \;\; \cA)=n,
\end{equation}
and $(\cB \;\; \cA)$ stands for the $n \times 2n$ matrix of the columns of $\cB$ and $\cA$.
The multi-valued part of $\Theta$ is given by
\begin{equation*}
 \mul (\Theta)= \left\{ {\bf v}  \in \bbC^n \,| \, \cA \bf v =0 \right\}= \ker (\cA),
\end{equation*}
so that it follows from \eqref{thetaop} and \eqref{tes2} that
\begin{equation}\label{oper}
 \cB {\bf u}= \cA \Theta_{\rm op} {\bf u}, \quad {\bf u} \in \dom (\Theta) =(\mul (\Theta))^\perp=(\ker (\cA))^\perp=\ran (\cA^*).
\end{equation}
Therefore, $\Theta_{\rm op}$ can be expressed as
\begin{equation}\label{oper1}
 \Theta_{\rm op}=\cA^{[-1]}\cB \upharpoonright \ran (\cA^*),
\end{equation}
where $\cA^{[-1]}$ stands for the Moore--Penrose inverse of $\cA$.
Hence, if $\ker (\cA)=\{0\}$, then  $\dom (\Theta) =\bbC^n$ and $\Theta=\cA^{-1}\cB$ 
is an $n \times n$ self-adjoint matrix.
Moreover, if $\ker (\cA)=\bbC^n$, then $\dom (\Theta)=\{0\}$ and $\Theta$
is a purely multi-valued self-adjoint relation in $\bbC^n$ given by  $\Theta=\{0\} \times \bbC^n$.
 
In the case $n=2$ and $\dim (\ker (\cA))=1$ the selfadjoint operator $\Theta_{\rm op}$,
acting in the invariant one-dimensional subspace $\dom (\Theta)$, 
is multiplication by the unique real number $c_\Theta$ given by
\begin{equation}\label{tes4}
  \cB {\bf u} =c_\Theta \cA {\bf u}, \quad {\bf u} \in \dom (\Theta)=(\ker (\cA))^\perp, \quad {\bf u} \neq 0.
\end{equation}
In the case $n=1$  the self-adjoint relation $\Theta$ can be expressed as
\begin{equation}\label{tes00}
 \Theta= \left\{ \{\bf u,  \bf v \}  \in \bbC \times \bbC  \,|\,
 \cos (\gamma)  \bf u + \sin (\gamma) \bf v =0 \right\},
\end{equation}
with $\gamma \in [0, \pi)$.
If $\gamma=0$, then $\mul (\Theta)=\bbC$ and $\Theta=\{0\} \times \bbC$,
whereas if $\gamma \neq 0$, then $\mul (\Theta)=\{0\}$ and $\Theta=\Theta_{\rm op}$
is multiplication by $-\cot (\gamma)$.
 
Summarizing, for a pair of $n \times n$ matrices $\cA$ and $\cB$ satisfying \eqref{tes1}  
and $\Theta$ given by \eqref{tes2}, the self-adjoint extension $A_\Theta$ of $S$ in \eqref{bt000} 
is given by
\begin{equation}\label{bt000+}
A_{\Theta} g= S^* g, \quad
g\in \dom  (A_\Theta)
=\big\{ h \in \dom (S^*) \,\big|\,  \cB \Gamma_0 h= \cA \Gamma_1 h \big\}.
\end{equation}
In this case the formula \eqref{ttt1}
can be written as
\begin{align}\label{ttt1+}
\begin{split}
&  \st_\Theta[f,g]= \st_{S_1} [f,g]+\big(\Lambda f, \cA^{[-1]}\cB  \Lambda g\big)_{\bbC^n},  \\[1mm]
& f,g \in  \dom (\st_\Theta)=\big\{ h \in \dom (\st_{S_1}) \,\big|\, \Lambda h \in (\ker (\cA))^\perp  \big\}.
\end{split}
\end{align}
The expression \eqref{ttt1+} can be further simplified in the situations described in \eqref{tes4} and \eqref{tes00}.

\medskip
\noindent
\textbf{Two Limit Circle Endpoints.}
Return to the situation of Proposition \ref{23.t7.t3.12}. Then choose  the boundary triplet
$\{\bbC^2, \Gamma_0, \Gamma_1\}$, defined on $\dom (T_{max})$,  by
\begin{equation}\label{bt0}
  \Gamma_0 g=\begin{pmatrix} \widetilde g(a) \\ \widetilde g(b) \end{pmatrix}, \,\,\,
  \Gamma_1 g=\begin{pmatrix} \widetilde g^{\, \prime}(a) \\ -\widetilde g^{\, \prime} (b) \end{pmatrix},
  \quad g \in \dom (T_{max}).
\end{equation}
Furthermore, introduce the form  $\st[f,g]=\mathfrak Q_{c,d}(f,g), f,g \in \dom (\st)=\dom (\mathfrak Q_{c,d})$, 
as in \eqref{3.4} and \eqref{3.5};
cf. \cite[Equation (6.11.2)]{BHS20}. 
Then it is easy to see that
\begin{equation}\label{bt100}
 (f, T_{min} g)_{\Lr}=\st[f,g], \quad f \in \dom (\st), \,\, g \in \dom (T_{min}) \subseteq \dom (\st),  
\end{equation}
see \cite[Corollary 6.11.2]{BHS20}, and
\begin{equation}\label{bt101}
\dom (T_{max}) \subseteq \dom (\st),
\end{equation}
see \cite[Lemma 6.11.3]{BHS20}. Define $\Lambda : \dom (\st) \to \bbC^2$ by
\begin{equation}\label{bt1}
  \Lambda g=\begin{pmatrix} \widetilde g(a) \\ \widetilde g(b) \end{pmatrix},
  \quad g \in \dom (\st).
\end{equation}
For every $\varepsilon >0$  there exists $C_\varepsilon >0$ such that
\begin{equation}\label{bt103}
\|\Lambda g \|^2_{\bbC^2} \leq \varepsilon \st[g,g]+C_\varepsilon \|g\|^2_{\Lr}, \quad g \in \dom (\st),
\end{equation}
see \cite[Lemma 6.11.4]{BHS20}. It now follows
from \eqref{bt100}--\eqref{bt103}
that $\{\bbC^2, \Lambda\}$ is a boundary pair
which is compatible with the boundary triplet in \eqref{bt0}, see \cite[Lemma 6.11.5]{BHS20}.
Thus we can apply \eqref{ttt1}, see \cite[Theorem 6.11.6]{BHS20}.  
Note that $\mathfrak{Q}_{c,d} =\st_{S_1}$,
where $\dom (S_1)= \ker (\Gamma_1)$; cf. \eqref{bt0}. 
The self-adjoint extensions of $T_{min}$ are  parametrized via \eqref{bt000+}, 
given \eqref{tes2} and \eqref{tes1}. As before, our treatment will distinguish between separated
and coupled boundary conditions. 

\medskip

First, consider the case of separated boundary conditions in Theorem \ref{case0},  
 where $\cA$ and $\cB$ are $2\times 2$ matrices of the form
\begin{equation}\label{ww}
  \cA=\begin{pmatrix} - \sin (\alpha) & 0 \\   0 & \sin (\beta)  \end{pmatrix}, \quad
  \cB=\begin{pmatrix} \cos (\alpha)  & 0 \\ 0 & \cos (\beta)  \end{pmatrix}.
 \end{equation}
Note that \eqref{tes1} is satisfied.
 There are three subcases to be discussed.
First consider the case $\alpha \neq 0$ and $\beta \neq 0$.
Then $\cA$ is invertible and it follows  
\eqref{ww}
that $\Theta$   is given by
 \begin{equation}\label{zz01}
 \Theta =\cA^{-1} \cB= \begin{pmatrix} - \cot (\alpha) & 0 \\
 0 & \cot (\beta)  \end{pmatrix}.
\end{equation}
Substitution of \eqref{bt1} and \eqref{zz01} into \eqref{ttt1+} leads to \eqref{3.8} in Theorem~\ref{case0}.
The second case is that either $\alpha=0$ or $\beta=0$ (without equality simultaneously).
Assume $\alpha=0$.  Then $\ker (\cA)$ is one-dimensional  and, in fact,
it follows from $ \mul (\Theta)=\ker (\cA)$ and  $\dom (\Theta)=(\mul (\Theta))^\perp$
that
\begin{equation*}
 \mul (\Theta) =\ls \left( \begin{pmatrix} 1 \\  0 \end{pmatrix} \right),
 \quad
 \dom (\Theta)= \ls  \left( \begin{pmatrix} 0 \\ 1 \end{pmatrix} \right).
\end{equation*}
Therefore one sees from
\begin{equation*}
 \cB {\bf u}=\begin{pmatrix} 0 \\ \cos (\beta) \end{pmatrix},
 \quad
 \cA {\bf u}=\begin{pmatrix}  0 \\ \sin (\beta)  \end{pmatrix},   \quad
 {\bf u}=\begin{pmatrix} 0 \\ 1 \end{pmatrix},
\end{equation*}
together with \eqref{tes4},  that $c_\Theta=\cot (\beta)$ and hence the operator
$\Theta_{\rm op}$ acting in $\dom (\Theta)=\ls ({\bf u})$ is given by
 \begin{equation*}
 \Theta_{\rm op}=\cot (\beta),
 \quad \dom (\st_\Theta)= \big\{ h \in \dom (\st) \,\big|\, \widetilde h(a)=0\big\}.
\end{equation*}
This together with \eqref{ttt1+} and \eqref{tes4} leads to \eqref{3.10a}.
Likewise, when $\beta=0$, then $c_\Theta=-\cot(\alpha)$ and hence
\begin{equation*}
 \Theta_{\rm op}=-\cot (\alpha),
 \quad \dom (\st_\Theta)=\{ h \in \dom (\st) \,|\, \widetilde h(b)=0 \},
\end{equation*}
and this leads to \eqref{3.20aa}.
The third case concerns $\alpha=\beta=0$.
Then $\mul (\Theta)=\ker (\cA)=\bbC^2$ and $\dom (\Theta)=\{0\}$.
Thus $\Theta_{\rm op}$ is trivial and
\begin{equation*}
\st_\Theta \subseteq \st,  \quad
\dom (\st_\Theta)= \big\{ h \in \dom (\st) \,\big|\, \widetilde h(a)=0=\widetilde h(b)\big\},
\end{equation*}
see \eqref{3.22aa}, which corresponds to the Friedrichs extension. This treats
all cases of Theorem \ref{case0}.

\medskip

Secondly, consider the case of coupled boundary conditions in Theorem \ref{case1}, 
 where $\cA$ and $\cB$ are $2\times 2$ matrices of the form 
\begin{equation}\label{yy}
  \cA=-\begin{pmatrix} e^{i \varphi} R_{1,2} & 0 \\ e^{i \varphi} R_{2,2} & 1 \end{pmatrix},
\quad
  \cB=\begin{pmatrix} e^{i \varphi} R_{1,1} & -1 \\ e^{i \varphi} R_{2,1} & 0 \end{pmatrix},
 \end{equation}
and hence \eqref{tes1} is satisfied.
 There are two subcases to be discussed.

The first subcase is when  $R_{1,2} \neq 0$.
 Then $\cA$ is invertible and it follows from  
 \eqref{yy} and $\det_{\bbC^2}(R)=1$,
 that $\Theta$  is given by
\begin{equation}\label{zz0}
  \Theta =\cA^{-1} \cB= - \frac{1}{R_{1,2}} \begin{pmatrix} R_{1,1} & -e^{-i \varphi} \\
 -e^{i \varphi} & R_{2,2} \end{pmatrix}.
\end{equation}
It follows from \eqref{ttt1} and the expression in \eqref{zz0} that
\begin{equation*}
(\Lambda f, \Theta \Lambda g)_{\bbC^2}=- \frac{1}{R_{1,2}}
 \begin{pmatrix} \widetilde f(a) \\ \widetilde f(b) \end{pmatrix}^*
 \begin{pmatrix} R_{1,1} & -e^{-i \varphi} \\ -e^{i \varphi} & R_{2,2} \end{pmatrix}
 \begin{pmatrix} \widetilde g(a) \\ \widetilde g(b) \end{pmatrix}, \quad f,g \in \dom (\st). 
\end{equation*}
Together with \eqref{ttt1+} this implies Theorem \ref{case1}\,$(i)$.   

The second subcase occurs when $R_{1,2}=0$,
which implies that $1=\det_{\bbC^2}(R)=R_{1,1}R_{2,2}$.
Then $\ker (\cA)$ is one-dimensional  and, in fact, it follows from $ \mul (\Theta)=\ker (\cA)$ and  $\dom (\Theta)=(\mul (\Theta))^\perp$
that
\begin{equation*}
 \mul (\Theta) =\ls \left(\begin{pmatrix} 1 \\  -e^{i\varphi} R_{2,2} \end{pmatrix} \right),
 \quad
 \dom (\Theta)= \ls  \left( \begin{pmatrix} e^{-i \varphi} R_{2,2} \\ 1 \end{pmatrix} \right).
\end{equation*}
Therefore one sees from
\begin{equation*}
 \cB {\bf u}=\begin{pmatrix} R_{1,1}R_{2,2} -1 \\ R_{2,1}R_{2,2} \end{pmatrix},
 \quad
 \cA {\bf u}=\begin{pmatrix}  0 \\ -R_{2,2}^2-1  \end{pmatrix},  \quad
 {\bf u}=\begin{pmatrix} e^{-i \varphi} R_{2,2} \\ 1 \end{pmatrix},
\end{equation*}
together with \eqref{tes4},  that
 \begin{equation}\label{zz1}
  c_\Theta=-\frac{R_{2,1}}{R_{1,1}+R_{2,2}}.
\end{equation}
Thus by \eqref{ttt1} and the expression in \eqref{zz1} it is clear that
\begin{align*}
\begin{split}
& (\Lambda f, \Theta_{\rm op} \Lambda g)_{\bbC^2}= -R_{1,1}R_{2,1} \overline{\widetilde f(a)} \widetilde g(a),  \\[1mm]
& f,g \in \dom (\st_\Theta)= \big\{ h \in \dom (\st) \,\big|\, \widetilde h(b)=e^{i\varphi} R_{1,1} \widetilde h(a)\big\}.
\end{split}
\end{align*}
Together with \eqref{ttt1+} this implies Theorem \ref{case1}\,$(ii)$.

\medskip
\noindent
\textbf{One Limit Circle Endpoint.} Return to  the situation of Proposition \ref{p4.1}.
Choose the boundary triplet $\{\bbC, \Gamma_0, \Gamma_1\}$, defined on $\dom (T_{max})$, by
\begin{equation}\label{lbt3}
  \Gamma_0 g= \widetilde g(a), \quad  \Gamma_1 g= \widetilde g^{\, \prime}(a),
  \quad g \in \dom (T_{max}).
\end{equation}
Furthermore, introduce the form  $\st[f,g]=Q_{c,d}(f,g)$, $f,g \in \dom (\st)=\dom (Q_{c,d})$, as in \eqref{4.3} and \eqref{4.4};
see \cite[Equation (6.12.2)]{BHS20}. 
Then it is easy to see that
\begin{equation}\label{lbt100}
 (f, T_{min} g)_{\sH}=\st[f,g], \quad  f \in \dom (\st), \,\,  g \in \dom (T_{min}) \subseteq\dom (\st),
\end{equation}
see \cite[Corollary 6.12.2]{BHS20}, and
\begin{equation}\label{lbt101}
\dom (T_{max}) \subseteq \dom (\st),
\end{equation}
see \cite[Lemma 6.12.3]{BHS20}.
Define $\Lambda : \dom (\st) \to \bbC$ by
\begin{equation}\label{lbt1}
  \Lambda g=\widetilde g(a),
  \quad g \in \dom (\st).
\end{equation}
For every $\varepsilon >0$  there exists $C_\varepsilon >0$ such that
\begin{equation}\label{lbt103}
\|\Lambda g \|^2_{\bbC} \leq \varepsilon \st[g,g]+C_\varepsilon \|g\|^2_{\Lr}, \quad g \in \dom (\st),
\end{equation}
see \cite[Lemma 6.12.4]{BHS20}. It now follows
from \eqref{lbt100}--\eqref{lbt103}
that $\{\bbC, \Lambda\}$ is a boundary pair
which is compatible with the boundary triplet in \eqref{lbt3}, see \cite[Lemma 6.12.5]{BHS20}.
Thus we can apply \eqref{ttt1}, see \cite[Theorem 6.12.6]{BHS20}.
Note that $\mathfrak{Q}_{c,d} =\st_{S_1}$,
where $\dom (S_1)=\ker (\Gamma_1)$; see \eqref{lbt3}. 
 
\medskip 
 
 The self-adjoint extensions of $T_{min}$ are now parametrized via \eqref{tes00}
\begin{equation*}
  \cos (\alpha) \Gamma_0 g + \sin (\alpha)  \Gamma_1 g =0, \quad g \in \dom (T_{max}),
\end{equation*}
over $\alpha \in [0, \pi)$, and denoted by $T_\alpha$, see Proposition \ref{p4.1}.
 Therefore, one has for $\alpha \in (0, \pi)$, that
 \begin{equation*}
\st_\alpha[f,g]= \st[f,g] - \cot (\alpha) (\Lambda f, \Lambda g)_{\bbC},
\quad f,g \in \dom (\st_\alpha)=\dom (\st).
\end{equation*}
Moreover, if $\alpha=0$, then
\begin{equation*}
\st_\alpha \subseteq \st, \quad
\dom (\st_\alpha)=\big\{ h \in \dom (\st) \,\big|\, \widetilde h(a)=0\big\},
\end{equation*}
which corresponds to the Friedrichs extension.
This implies Theorem \ref{caselp}, cf. \cite[Theorem 6.12.6]{BHS20}.

\medskip

For a succinct treatment of boundary triplets and Weyl--Titchmarsh functions tailored towards ordinary differential operators (a.k.a., ``boundary triplets in a nutshell''), see also \cite[App.~D.7]{GNZ24}. Likewise, a treatment of boundary pairs, going back to \cite{A96}, can be found in \cite[Ch.~5]{BHS20}.

\medskip


\noindent
{\bf Acknowledgments.}
J.B.\ is most grateful for a stimulating research stay at Baylor University, where some parts of this paper were written in May of 2025. F.G.\ and H.S.\ gratefully acknowledge kind invitations to the Institute of Applied Mathematics at the Graz University of Technology, Austria. 
This research was funded by the Austrian Science Fund (FWF)
Grant-DOI: 10.55776/P33568.

%


\end{document}